\newtheorem{theo}{Theorem}[section]
\newtheorem*{theoa}{Theorem A}
\newtheorem*{theob}{Theorem B}
\newtheorem{lemm}[theo]{Lemma}
\newtheorem{rema}[theo]{Remark}
\numberwithin{equation}{section}
\begin{document}
\large
\title[The Quasi-Periodic Cauchy Problem for the Generalized Benjamin-Bona-Mahony Equation]{The Quasi-Periodic Cauchy Problem for the Generalized Benjamin-Bona-Mahony Equation on the Real Line}

\author{David Damanik}
\address{\scriptsize (D. Damanik)~Department of Mathematics, Rice University, 6100 S. Main Street, Houston, Texas
77005-1892}
\email{damanik@rice.edu}
\thanks{The first author (D. Damanik) was supported by Simons Fellowship $\# 669836$ and NSF grants DMS--1700131 and DMS--2054752}

\author{Yong Li}
\address{\scriptsize  (Y. Li)~Institute of Mathematics, Jilin University, Changchun 130012, P.R. China. School of Mathematics and Statistics, Center for Mathematics and Interdisciplinary Sciences, Northeast Normal University, Changchun, Jilin 130024, P.R.China.}
\email{liyong@jlu.edu.cn}
\thanks{The second author (Y. Li) was supported in part by NSFC grants 12071175, 11171132 and 11571065, National Basic Research Program of China Grant 2013CB834100, and Natural Science Foundation of Jilin Province 20200201253JC}

\author{Fei Xu}
\address{\scriptsize (F. Xu)~Institute of Mathematics, Jilin University, Changchun 130012, P.R. China.}
\email{stuxuf@outlook.com}
\thanks{The third author (F. Xu) was supported by Graduate Innovation Fund of Jilin University 101832018C162.
}

\subjclass[2000]{Primary 35B15;  Secondary 35A09}
\keywords{Quasi-Periodic Cauchy Problem; Spatially Quasi-Periodic Solutions; Generalized Benjamin-Bona-Mahony Equation}

\begin{abstract}
This paper studies the existence and uniqueness problem for the generalized Benjamin-Bona-Mahony (gBBM) equation with quasi-periodic initial data on the real line. We obtain an existence and uniqueness result in the classical sense with arbitrary time horizon under the assumption of polynomially decaying initial Fourier data by using the combinatorial analysis method developed in earlier papers by Christ, Damanik-Goldstein, and the present authors. Our result is valid for exponentially decaying initial Fourier data and hence can be viewed as a Cauchy-Kovalevskaya theorem for the gBBM equation with quasi-periodic initial data. 
\end{abstract}

\maketitle

\tableofcontents

\newpage

\section{Introduction}

The equation
\begin{align}\label{standard.bbm}
u_t - u_{xxt} + u_x + u u_x = 0
\end{align}
was introduced by Benjamin, Bona, and Mahony in \cite{BBM72} as an improvement of the Korteweg–de Vries equation
\begin{align}\label{e.kdv}
u_t + u_{xxx} + uu_x = 0
\end{align}
for modeling unidirectional propagation of long waves of small amplitude. We will for simplicity refer to \eqref{standard.bbm} as BBM and to \eqref{e.kdv} as KdV.

Olver showed in \cite{O79} that BBM possesses exactly three independent and non-trivial conservation laws, whereas KdV is known to possess infinitely many \cite{MGK68}. Both equations admit solitary wave solutions. KdV can be described via a Lax pair \cite{L68}, whereas BBM cannot.

Establishing the existence of solutions to the Cauchy problem associated with BBM is simpler for decaying or periodic initial data. For some foundational results in these two special cases we refer the reader to \cite{BBM72} and \cite{MM77}, respectively.
In the present paper we are interested in studying spatially quasi-periodic solutions, which is a more challenging task.

For KdV with quasi-periodic initial data, the existence and uniqueness of solutions was studied by Tsugawa \cite{T12} and Damanik-Goldstein \cite{DG16JAMS}. More recently, the analogous problem for the generalized Korteweg–de Vries (gKdV) equation
\begin{align}\label{e.gkdv}
u_t + u_{xxx} + u^{p-1} u_x = 0
\end{align}
was studied by the three of us in \cite{DLX21ARXIV}. Via these works it is known that sufficiently small quasi-periodic initial data with exponentially decaying Fourier coefficients admit a local in time solution that remains quasi-periodic in the spatial variable with exponentially decaying Fourier coefficients. Indeed, within this class of functions, the solution is unique. For KdV one can go further and show that, for Diophantine frequency vector, the local result can be iterated in constant time steps. In this way, one obtains global existence and uniqueness \cite{DG16JAMS}. We mention in passing that the dependence on time is in this setting known to be almost periodic \cite{BDGL18}, which is a result in line with (and providing evidence for) the Deift conjecture \cite{D08, D17}, which states that the KdV equation with almost periodic initial data admits a global solution that is almost periodic both in space and time.

This passage from a local to a global result in \cite{DG16JAMS} rests on a rather involved spectral analysis of quasi-periodic Schr\"odinger operators \cite{DG14}. As an input of this kind is not available for gKdV, it is at present unclear how to leverage the local result from \cite{DLX21ARXIV} to a global result.

In this paper we want to discuss the existence and uniqueness of spatially quasi-periodic solutions of BBM, and in fact more generally of the generalized Benjamin-Bona-Mahony (gBBM) equation
\begin{align}\label{sl}
u_t-u_{xxt}+u_x+u^{p-1}u_x=0.
\end{align}

As initial data we consider quasi-periodic functions of the form
\begin{align}\label{dt}
u(0,x)=\sum_{n\in\mathbb Z^\nu}\hat u(n)e^{{\rm i}\langle n\rangle x},
\end{align}
where $2\leq p\in\mathbb N, x\in\mathbb R$ (the real line), $\omega=(\omega_1,\cdots,\omega_\nu)\in\mathbb R^\nu$ is a given wave vector, $n=(n_1,\cdots,n_\nu)\in\mathbb Z^\nu$ is the dual vector, and $\langle n\rangle\triangleq\langle n,\omega\rangle$ is the standard inner product defined by letting $\langle n,\omega\rangle:=\sum_{j=1}^\nu n_j\omega_j$.  As usual we assume that the wave vector is  non-resonant or rationally independent, that is, $\langle n\rangle=0$ implies that $n=0\in\mathbb Z^\nu$.

What we are interested in is the existence and uniqueness of spatially quasi-periodic solutions defined by the Fourier series
\begin{align}
u(t,x)=\sum_{n\in\mathbb Z^\nu}\hat u(t,n)e^{{\rm i}\langle n\rangle x}
\end{align}
to the quasi-periodic Cauchy problem \eqref{sl}--\eqref{dt} in the classical sense. Our main results are the following Theorem A (exponential decay) and Theorem B (polynomial decay) below.

\begin{theoa}
Suppose the Fourier coefficients $\hat u(n)$ of the initial data satisfy the following exponential decay condition,
\begin{align}\label{exd}
|\hat u(n)|\leq\mathcal A^{\frac{1}{p-1}}e^{-\rho|n|},\quad\forall n\in\mathbb Z^\nu,
\end{align}
where $\mathcal A>0$ and $0<\rho\leq1$.

Then the quasi-periodic Cauchy problem \eqref{sl}--\eqref{dt} has a unique spatially quasi-periodic solution in the classical sense on the time interval $[0,{\mathcal L}_p]$, where
\begin{align}\label{ga}
{\mathcal L}_p \triangleq \left(1-\frac{1}{p}\right)^{p-1}\frac{\rho^{(p-1)\nu}}{\mathcal A 6^{(p-1)\nu}}.
\end{align}
\end{theoa}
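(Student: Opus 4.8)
The plan is to pass to the Fourier side, solve the resulting infinite ODE system by a power series in time, and control the Taylor coefficients through a combinatorial (tree) expansion together with a convolution estimate for exponentially decaying weights. First I would substitute the ansatz $u(t,x)=\sum_{n}\hat u(t,n)e^{{\rm i}\langle n\rangle x}$ into \eqref{sl}. Using $u^{p-1}u_x=\frac1p(u^p)_x$ and the fact that, by rational independence of $\omega$, products of the exponentials correspond to convolution over $\mathbb Z^\nu$, the equation becomes, for each $n$,
\begin{align*}
(1+\langle n\rangle^2)\,\partial_t\hat u(t,n)=-{\rm i}\langle n\rangle\hat u(t,n)-\frac{{\rm i}\langle n\rangle}{p}\sum_{n_1+\cdots+n_p=n}\hat u(t,n_1)\cdots\hat u(t,n_p).
\end{align*}
The decisive structural point is that inverting $(1-\partial_x^2)$ produces the bounded symbol $\phi(n):=\frac{\langle n\rangle}{1+\langle n\rangle^2}$ with $|\phi(n)|\le\frac12$; this is what distinguishes gBBM from gKdV and removes the derivative loss, so no oscillation or resonance cancellation is needed and a pure majorant argument should close. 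Dividing by $1+\langle n\rangle^2$ and expanding $\hat u(t,n)=\sum_{k\ge0}\hat a_k(n)t^k$ with $\hat a_0(n)=\hat u(n)$, matching powers of $t$ yields the recursion
\begin{align*}
(k+1)\hat a_{k+1}(n)=-{\rm i}\phi(n)\hat a_k(n)-\frac{{\rm i}\phi(n)}{p}\sum_{k_1+\cdots+k_p=k}\ \sum_{n_1+\cdots+n_p=n}\hat a_{k_1}(n_1)\cdots\hat a_{k_p}(n_p).
\end{align*}

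Next I would prove two estimates and feed them into this recursion. The first is the convolution estimate: writing $N=(p-1)k+1$ for the number of leaves produced after $k$ nonlinear interactions and using $|n|\le\sum_i|n_i|$ to split the decay as $e^{-\rho\sum_i|n_i|}\le e^{-\frac\rho2|n|}\,e^{-\frac\rho2\sum_i|n_i|}$, the $N$-fold sum is bounded by $e^{-\frac\rho2|n|}\big(\sum_{m\in\mathbb Z^\nu}e^{-\frac\rho2|m|}\big)^{N-1}$, and since the one-dimensional sum equals $\coth(\rho/4)\le 6/\rho$ for $0<\rho\le1$ this contributes the factor $(6/\rho)^{(p-1)\nu}$ per level. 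The second is a combinatorial count: the number of rooted $p$-ary trees encoding the $k$-fold iteration is a Fuss--Catalan number, whose exponential growth rate $p(1-1/p)^{-(p-1)}$ combines with the factor $\frac1p$ carried by each of the $k$ nonlinear vertices to leave exactly $(1-1/p)^{-(p-1)}$ per level. Since $|\phi|\le\frac12$ only helps, an induction on $k$ should then give $|\hat a_k(n)|\le\mathcal A^{\frac1{p-1}}\,r^k\,e^{-\frac\rho2|n|}$ with $r=\mathcal A\,(6/\rho)^{(p-1)\nu}(1-1/p)^{-(p-1)}$, the per-level power of $\mathcal A$ arising because each level adds $p-1$ leaves each carrying $\mathcal A^{1/(p-1)}$.

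It then follows that $\sum_k\hat a_k(n)t^k$ converges absolutely and uniformly in $n$ for $0\le t<1/r=\mathcal L_p$; the exponential decay in $n$ guarantees that the Fourier series for $u$ and for all the $x$- and $t$-derivatives appearing in \eqref{sl} converge uniformly, so the sum is a genuine classical solution and the formal manipulations above are justified a posteriori. For uniqueness I would recast the problem as the Duhamel fixed-point equation
\begin{align*}
\hat u(t,n)=e^{-{\rm i}\phi(n)t}\hat u(n)-\frac{{\rm i}\phi(n)}{p}\int_0^t e^{-{\rm i}\phi(n)(t-s)}\sum_{n_1+\cdots+n_p=n}\hat u(s,n_1)\cdots\hat u(s,n_p)\,ds
\end{align*}
and show, via the same convolution estimate, that the map is a contraction on a ball in the weighted space with norm $\sup_n e^{\frac\rho2|n|}|\hat u(t,n)|$; any two classical spatially quasi-periodic solutions lie in such a ball and hence coincide on $[0,\mathcal L_p]$.

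The main obstacle I anticipate is the convolution estimate with sharp constants: one must carry the exponential weight through the full $N$-fold sum at once, rather than one convolution at a time (which would degrade the decay rate geometrically), and track precisely how the $\frac1p$ from the nonlinearity, the Fuss--Catalan growth, and the bound on $\sum_m e^{-\frac\rho2|m|}$ assemble into the stated constant $\mathcal L_p=(1-1/p)^{p-1}(\rho/6)^{(p-1)\nu}/\mathcal A$. A secondary technical point is the careful verification that the bounded symbol $\phi$ indeed prevents any loss, so that the crude bound $|\phi|\le\frac12$ suffices and no cancellation among the oscillatory factors $e^{-{\rm i}\phi(n)t}$ needs to be exploited.
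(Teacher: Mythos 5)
Your overall strategy---Fourier reduction, the bounded BBM symbol $|\lambda(n)|=|\langle n\rangle|/(1+\langle n\rangle^2)\le\frac12$, a full tree expansion estimated through a single splitting of the exponential weight $e^{-\rho\sum_j|n_j|}\le e^{-\frac{\rho}{2}|n|}e^{-\frac{\rho}{2}\sum_j|n_j|}$, the per-leaf convolution bound $\sum_{m\in\mathbb Z^\nu}e^{-\frac{\rho}{2}|m|}\le(6/\rho)^\nu$, and a Fuss--Catalan-type count assembling into the stated $\mathcal L_p$---is exactly the paper's method; there the count is packaged as the inductive bound $\sum_{\gamma^{(k)}}\natural^{\beta(\gamma^{(k)})}/\mathscr D(\gamma^{(k)})\le p/(p-1)$ for $\natural\le(p-1)^{p-1}/p^p$, which is precisely the Fuss--Catalan generating function evaluated at its critical point. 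However, your execution has a genuine gap: you expand $\hat u(t,n)$ itself in powers of $t$, so your recursion for $\hat a_{k+1}$ contains the linear term $-{\rm i}\phi(n)\hat a_k(n)$, while your claimed induction bound $|\hat a_k(n)|\le\mathcal A^{\frac{1}{p-1}}r^ke^{-\frac{\rho}{2}|n|}$ with $r=\mathcal A(6/\rho)^{(p-1)\nu}(1-1/p)^{-(p-1)}$ accounts only for the $p$-ary (nonlinear) vertices; the unary vertices generated by the linear term are silently dropped. The remark that ``\,$|\phi|\le\frac12$ only helps\,'' is not true at the level of sharp constants: bounding the linear term by $\frac12|\hat a_k|$ turns the majorant into $Y'=\frac12 Y+cY^p$, whose time of guaranteed convergence is strictly smaller than that of $Y'=cY^p$ alone, so the constant $\mathcal L_p$ is not reached by this bookkeeping. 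The paper avoids the issue entirely by passing to the Duhamel integral equation \emph{before} estimating, so the linear flow enters only through the propagator $e^{\lambda(n)t}$, which is unimodular since $\lambda(n)$ is purely imaginary, and no unary vertices appear; you could equally repair your version by the gauge change $b(t,n)=e^{{\rm i}\phi(n)t}\hat u(t,n)$ before expanding, or by iterating your own Duhamel fixed-point map (Picard iteration, as the paper does) instead of matching Taylor coefficients.

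Two smaller points. First, even after that fix, a purely geometric bound $r^k$ yields convergence only on the half-open interval $0\le t<\mathcal L_p$, whereas the theorem asserts the closed interval $[0,\mathcal L_p]$; the paper obtains the endpoint because its combinatorial sum, which retains the denominators $\mathscr D$ coming from iterated time integration, remains bounded by $p/(p-1)$ \emph{at} the critical parameter---equivalently, the Fuss--Catalan generating function is finite at its radius of convergence---and you would need either that generating-function induction or the subexponential correction $C^{(p)}_k\sim {\rm const}\cdot k^{-3/2}\left(p^p/(p-1)^{p-1}\right)^k$ to recover $t=\mathcal L_p$. Second, in the uniqueness step your assertion that any two classical spatially quasi-periodic solutions automatically lie in the weighted ball $\sup_n e^{\frac{\rho}{2}|n|}|\hat u(t,n)|\le\mathcal B$ is unjustified: a classical quasi-periodic solution need not a priori have exponentially decaying Fourier coefficients. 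The paper correspondingly proves uniqueness within the class of solutions satisfying the integral equation together with the decay bound $|\hat v(t,n)|\le\mathcal B e^{-\frac{\rho}{2}|n|}$, by iterating the difference estimate $k$ times and letting $k\to\infty$ (no contraction or smallness is needed); your contraction argument is fine once restricted to that same class, and the theorem should be read with that qualification.
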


\begin{rema}\label{rem.1.1}
Theorem~A is a local existence and uniqueness result with arbitrary time horizon. That is, given any $T > 0$, we provide an explicit class of quasi-periodic initial data with exponential Fourier decay (namely those obeying \eqref{exd} with parameters $\mathcal A, \rho, \nu$ subject to the condition
$$
\left(1-\frac{1}{p}\right)^{p-1}\frac{\rho^{(p-1)\nu}}{\mathcal A 6^{(p-1)\nu}} \ge T
$$
for the prescribed value of $T$) for which we establish the existence of a unique spatially quasi-periodic solution to \eqref{sl}--\eqref{dt} up to the time horizon $T$.
\end{rema}

Furthermore, we may replace the exponential decay condition \eqref{exd} by the polynomial decay condition \eqref{pod} below and obtain the same conclusions for gBBM \eqref{sl} as in Theorem A. To express the time horizon as a function of the parameters of the decay parameters, it is convenient to introduce
$$
\mathfrak b(\mathtt s;\nu)\triangleq1+\sum_{j=1}^{\nu}\left(\begin{matrix} \nu \\j \end{matrix}\right)2^{j} j^{-\mathtt s}\left\{\zeta\left(\frac{\mathtt s}{j}\right)\right\}^{j},
$$
where $\zeta$ is the Riemann zeta function,
$$
\zeta(\mathtt s) = \sum_{n=1}^{\infty}\frac{1}{n^{\mathtt s}}.
$$

\begin{theob}
Suppose the Fourier coefficients $\hat u(n)$ of the initial data satisfy the following polynomial decay condition,
\begin{align}\label{pod}
|\hat u(n)|\leq\mathtt A^{\frac{1}{p-1}}(1+|n|)^{-\mathtt r},\quad\forall n\in\mathbb Z^\nu,
\end{align}
where $\mathtt A>0$, $\mathtt r$ is a sufficiently large positive constant, and $1\leq\nu<\frac{\mathtt r}{4}-2$.

Then the quasi-periodic Cauchy problem \eqref{sl}--\eqref{dt} has a unique spatially quasi-periodic solution in the classical sense on the time interval $[0,{\mathcal L}_p']$, where
\begin{align}\label{gaa}
{\mathcal L}_p' \triangleq \left(1-\frac{1}{p}\right)^{p-1} \mathtt A^{-1} \mathfrak b\left(\frac{\mathtt r}{2};\nu\right)^{-1}.
\end{align}
\end{theob}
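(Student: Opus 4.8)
\emph{Overview.} The plan is to solve \eqref{sl}--\eqref{dt} by the Fourier--Taylor (power series) method underlying the combinatorial analysis, exploiting the fact that, unlike for KdV, the BBM dispersion is a \emph{bounded} Fourier multiplier. First I would recast \eqref{sl} in Fourier--coefficient form: applying $(1-\partial_x^2)^{-1}$ and writing $u^{p-1}u_x=\tfrac1p\partial_x(u^p)$ turns the equation into $u_t=-(1-\partial_x^2)^{-1}\partial_x\big(u+\tfrac1p u^p\big)$, so that on the quasi-periodic ansatz the coefficients obey
\begin{align*}
\partial_t\hat u(t,n)=-\frac{{\rm i}\langle n\rangle}{1+\langle n\rangle^2}\Big(\hat u(t,n)+\tfrac1p\,\widehat{u^p}(t,n)\Big),\qquad
\widehat{u^p}(t,n)=\sum_{n_1+\cdots+n_p=n}\ \prod_{l=1}^p\hat u(t,n_l).
\end{align*}
The structural point is the uniform bound $\big|\tfrac{{\rm i}\langle n\rangle}{1+\langle n\rangle^2}\big|\le\tfrac12$, valid for every $n$; this is exactly what frees the argument from any Diophantine hypothesis on $\omega$ and makes a Cauchy--Kovalevskaya scheme (rather than a dispersive one) the natural tool.

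\emph{Recurrence and the key convolution estimate.} Expanding $\hat u(t,n)=\sum_{k\ge0}a_k(n)t^k$ with $a_0(n)=\hat u(n)$ and matching powers of $t$ yields
\begin{align*}
(k+1)\,a_{k+1}(n)=-\frac{{\rm i}\langle n\rangle}{1+\langle n\rangle^2}\Big(a_k(n)+\tfrac1p\!\!\sum_{\substack{k_1+\cdots+k_p=k\\ n_1+\cdots+n_p=n}}\!\!\prod_{l=1}^p a_{k_l}(n_l)\Big).
\end{align*}
The heart of the matter is the polynomial convolution (algebra) estimate controlling these frequency sums, and it rests on two ingredients. The first is the elementary inequality $1+|n|\le\prod_{l}(1+|n_l|)$, valid whenever $n=\sum_l n_l$, which lets one extract the output weight via $\prod_l(1+|n_l|)^{-\mathtt r}\le(1+|n|)^{-\mathtt r/2}\prod_l(1+|n_l|)^{-\mathtt r/2}$. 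The second is the lattice--sum bound
\begin{align*}
\sum_{m\in\mathbb Z^\nu}(1+|m|)^{-\mathtt s}
\le 1+\sum_{j=1}^{\nu}\binom{\nu}{j}\,2^{j}\,j^{-\mathtt s}\{\zeta(\mathtt s/j)\}^{j}=\mathfrak b(\mathtt s;\nu),
\end{align*}
which is precisely the content of the definition of $\mathfrak b$: decomposing $m$ according to its set $J$ of $j$ nonzero coordinates and applying the arithmetic--geometric mean inequality to $|m|=\sum_i|m_i|$ turns each block into $j^{-\mathtt s}\{\zeta(\mathtt s/j)\}^j$, with $2^j$ sign choices and $\binom{\nu}{j}$ choices of support. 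Combining the two ingredients with $\mathtt s=\tfrac{\mathtt r}{2}$ bounds the $p$-fold convolution of polynomially decaying data, with the operative constant governed by $\mathfrak b(\tfrac{\mathtt r}{2};\nu)$. The hypothesis $1\le\nu<\tfrac{\mathtt r}{4}-2$ is what guarantees both the convergence of this lattice sum and a comfortable surplus of decay, so that the resulting coefficient bounds produce a genuinely classical (in particular twice $x$-differentiable) function. At the qualitative level the same estimate says that the weighted space $\{c:\sup_n(1+|n|)^{\mathtt r/2}|c(n)|<\infty\}$ is a convolution Banach algebra, on which the BBM vector field is a polynomial (hence analytic) map, giving local well-posedness for free; the combinatorial analysis is needed only to pin down the \emph{sharp} time.

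\emph{Majorant, radius of convergence, existence and uniqueness.} With the convolution estimate in hand I would run an induction $|a_k(n)|\le b_k(1+|n|)^{-\mathtt r/2}$, most transparently by writing $a_k(n)$ as a sum over rooted $p$-ary trees with $k$ internal vertices (each carrying a propagator bounded by $\tfrac1{2p}$) and $(p-1)k+1$ leaves, each leaf carrying a copy of the full--weight datum $\hat u(m_i)$ with $\sum_i m_i=n$; the number of such trees is a Fuss--Catalan number, and the convolution bound is applied once over the leaves. The generating function $B(t)=\sum_k b_kt^k$ then obeys a Bernoulli-type differential inequality
\begin{align*}
B'(t)\ \le\ \tfrac12\,B(t)+c\,B(t)^p,\qquad B(0)=\mathtt A^{1/(p-1)},
\end{align*}
whose dominant balance $B'\sim c\,B^p$ integrates explicitly and blows up at a finite time proportional to $B(0)^{1-p}/c=\mathtt A^{-1}/c$. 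Tracking the constants---the prefactor $(1-\tfrac1p)^{p-1}$ emerging from solving $B'=cB^p$ with the Fuss--Catalan growth rate, and the lattice sum $\mathfrak b(\tfrac{\mathtt r}{2};\nu)$ from the convolution bound---identifies this radius with $\mathcal L_p'$ in \eqref{gaa}. On $[0,\mathcal L_p')$ the double series $\sum_{n}\big(\sum_k a_k(n)t^k\big)e^{{\rm i}\langle n\rangle x}$ converges absolutely together with its first two $x$-derivatives and its $t$-derivative, so term-by-term differentiation is legitimate and the sum is a classical spatially quasi-periodic solution; the endpoint $\mathcal L_p'$ is recovered by a standard continuation argument. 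Uniqueness follows because any spatially quasi-periodic classical solution has Fourier coefficients that solve the same ODE system with the same initial data: either one invokes Picard--Lindel\"of in the weighted Banach algebra above (where the vector field is locally Lipschitz), or one observes that the recurrence forces the Taylor coefficients to coincide with the $a_k(n)$.

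\emph{Main obstacle.} I expect the crux to be the polynomial convolution estimate together with the bookkeeping needed to obtain the \emph{precise} constant in \eqref{gaa} rather than a cruder one. Exponential weights $e^{-\rho|n|}$ factor perfectly under $|n|\le\sum_l|n_l|$, which is what makes Theorem~A clean; polynomial weights $(1+|n|)^{-\mathtt r}$ do not, so one must split the available decay, pay a constant, and verify that $\mathfrak b(\tfrac{\mathtt r}{2};\nu)$ is finite and sharp enough for the induced majorant ODE to still deliver the stated horizon. Making the combinatorial weights collapse to the explicit expression \eqref{gaa}, and simultaneously retaining enough spatial decay (via $1\le\nu<\tfrac{\mathtt r}{4}-2$) for the solution to be classical, is the delicate point where the combinatorial analysis of Christ and of Damanik--Goldstein is essential.
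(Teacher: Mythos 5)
Most of your machinery coincides with the paper's: the bounded multiplier $\lambda(n)=-{\rm i}\langle n\rangle/(1+\langle n\rangle^2)$ with $|\lambda(n)|\le\tfrac12$, the one-time splitting of the weight $\prod_l(1+|n_l|)^{-\mathtt r}\le(1+|n|)^{-\mathtt r/2}\prod_l(1+|n_l|)^{-\mathtt r/2}$ via the generalized Bernoulli inequality, the lattice-sum bound by support decomposition plus AM--GM yielding $\mathfrak b(\mathtt s;\nu)$, the decisive device of applying the convolution estimate \emph{once over the leaves} of the tree (iterating the algebra bound at weight $\mathtt r/2$ would not close with the stated constant), and the use of $\nu<\tfrac{\mathtt r}{4}-2$ to verify the classical sense. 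Your uniqueness argument via Picard--Lindel\"of in the weighted algebra is acceptable for the relevant uniqueness class. The genuine gap is in how you generate the expansion: you Taylor-expand $\hat u(t,n)$ in $t$ and match coefficients, so the \emph{linear} term $\lambda(n)\hat u(t,n)$ enters your majorant as the summand $\tfrac12 B$ in $B'\le\tfrac12 B+cB^p$. The paper instead works with the Duhamel/integral form (Lemma~\ref{cthm} and its gBBM analogue), in which the linear flow appears only through the factors $e^{\lambda(n)t}$, and since $\lambda(n)$ is \emph{purely imaginary} these have modulus exactly $1$: the linear part costs nothing at all. Raw Taylor expansion destroys this unimodularity and retains only $|\lambda(n)|\le\tfrac12$, and the extra $\tfrac12 B$ term strictly shrinks the majorant's lifespan: substituting $v=B^{1-p}$ in $B'=\tfrac12B+cB^p$ gives the blow-up time $t^*=\tfrac{2}{p-1}\log\bigl(1+\tfrac{\mathtt A^{-1}}{2c}\bigr)$, which by $\log(1+x)<x$ is \emph{strictly less} than the time $\tfrac{\mathtt A^{-1}}{(p-1)c}$ obtained from $B'=cB^p$. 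So your ``dominant balance $B'\sim cB^p$'' step discards a term that genuinely reduces the guaranteed horizon, and the constant-tracking cannot identify the radius with ${\mathcal L}_p'$ in \eqref{gaa}; you prove the theorem only on a strictly shorter interval.

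A second, related defect: you obtain an open interval of convergence and appeal to ``a standard continuation argument'' for the endpoint, but at the majorant's blow-up time no bound survives, and restarting from an interior time with the (larger) evolved amplitude only yields shorter increments, so continuation does not recover $t={\mathcal L}_p'$. The paper avoids both problems at once: running the tree expansion on the Picard iterates makes every time-propagator a unimodular exponential, the time-dependence is then controlled by $t^{\beta(\gamma^{(k)})}/\mathscr D(\gamma^{(k)})$ alone, and the uniform bound $\sum_{\gamma^{(k)}}\natural^{\beta(\gamma^{(k)})}/\mathscr D(\gamma^{(k)})\le p/(p-1)$ of Lemma~\ref{smf} holds up to and \emph{including} the borderline value $\natural=(p-1)^{p-1}/p^p$, which gives uniform decay of all iterates on the closed interval $[0,{\mathcal L}_p']$ and, combined with your once-over-leaves convolution bound with $\mathfrak b\bigl(\tfrac{\mathtt r}{2};\nu\bigr)$, reproduces \eqref{gaa} exactly. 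Reformulating your scheme in this Duhamel form (rather than raw powers of $t$) repairs the proof; as written, it falls short of the stated sharp time horizon.
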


\begin{rema}\label{rem.1.2}
(a) What was pointed out in Remark~\ref{rem.1.1} applies equally well here. Theorem~B is a local existence and uniqueness result with arbitrary time horizon. That is, given any $T > 0$, we provide an explicit class of quasi-periodic initial data with polynomial Fourier decay (namely those obeying \eqref{pod} with parameters $\mathtt A, \mathtt r, \nu$ subject to the condition
$$
\left(1-\frac{1}{p}\right)^{p-1} \mathtt A^{-1} \mathfrak b\left(\frac{\mathtt r}{2};\nu\right)^{-1} \ge T
$$
for the prescribed value of $T$) for which we establish the existence of a unique spatially quasi-periodic solution to \eqref{sl}--\eqref{dt} up to the time horizon $T$.

\medskip

(b) It follows from the exponential (resp. polynomial) decay that the solution we construct is in the classical sense. In addition, the exponential decay property implies that our result can be viewed as a Cauchy-Kovalevskaya theorem for the gBBM equation with quasi-periodic initial data, based on a basic fact: a quasi-periodic Fourier series with exponentially decaying Fourier coefficients is analytic.

\medskip

(c) The extension of the existence result for BBM with smooth and decaying initial data from \cite{BBM72} to the case of gBBM was discussed by Albert in \cite{A86}, see also \cite{A89}. Moreover, solutions for gBBM with $p = 5$ that are periodic in space and quasi-periodic in time were discussed by Shi and Yan in \cite{SY22}. Let us also mention that Wang discussed in \cite{W19} solutions to nonlinear PDEs that are periodic in space and quasi-periodic in time from a more general perspective.

\medskip

(d) The dependence on the spatial variable in our setting is neither decaying nor periodic. There are only a few existing results for initial data lacking these two properties. In addition to the works already mentioned, Oh discusses the nonlinear Schr\"odinger equation in one dimension with almost periodic initial data \cite{O15a, O15b} and Wang presents spatially quasi-periodic standing wave solutions to the nonlinear Schr\"odinger equation in arbitrary dimension \cite{W20CMP}.
We also refer the reader to \cite{DSS20, DSS21, KMV20} for a broader discussion and to \cite{EVY19, GH03, K18, LY20}, which are primarily based on inverse spectral theory and the preservation of reflectionlessness by equations in the KdV hierarchy (see also \cite{BE97, E93, E94, VY02} for related work).

\medskip

(e) The absence of decay and periodicity makes the problem at hand significantly more difficult. As in the works \cite{DG16JAMS} and \cite{DLX21ARXIV} we have to deal with the higher dimensional discrete convolution operation
\[
{\hat u}^{\ast p}(\text{\rm  fixed total distance})=\sum_{\substack{q_1,\cdots,q_{p}\in\mathbb Z^\nu\\q_1+\cdots+
q_p=~\text{\rm  fixed total distance}}}\prod_{j=1}^{p}{\hat u}(q_j)
\]
appearing in the Picard iteration, during which the number of terms will increase exponentially. More precisely, let $\mathbf N_k$ be the number of terms for the Picard sequence. It is easy to see that $\mathbf N_1=2$ and $\mathbf N_k=1+\mathbf N_{k-1}^p$ for all $k\geq2$. The key point to overcoming this difficulty is an explicit combinatorial analysis in order to obtain the exponential (resp. polynomial) decay of the Picard sequence; see \cite{DG16JAMS} and \cite{DLX21ARXIV} for an implementation of this strategy for KdV and gKdV, respectively.

\medskip

(f) The structure of the proofs of Theorem A and Theorem B is given by the following diagram:
\[
\small
\xymatrix{
\boxed{\text{reduction of a PDE to a nonlinear infinite system of coupled ODEs}}\ar[d]^{\text{feedback of nonlinearity}}\\
\boxed{\text{Picard iteration}}\ar[d]\ar[r]^{\text{discrete convolution\hspace{2mm}}}&\boxed{\text{combinatorial analysis}}\ar[d]\\
\boxed{\text{Cauchy sequence}}\ar[d]&\boxed{\text{exponential/polynomial decay}}\ar[l]_{\text{interpolation}}\ar[d]\\
\boxed{\text{local existence}}&\boxed{\text{uniqueness}}
}
\]
\end{rema}

\section{The Special Case $p = 2$: BBM}\label{bm}

For the sake of convenience and readability, we first study the quasi-periodic Cauchy problem \eqref{sl}--\eqref{dt} for $p=2$. Whenever we refer to \eqref{sl} in this section we tacitly assume that $p=2$.

We will denote the Fourier coefficients at time $0$ and time $t$ by $c(n)\triangleq \hat  u(n)$ and $c(t,n)\triangleq\hat u(t,n)$, respectively.

\subsection{Reduction}

The first step in our proof is a reduction of the PDE in question to a nonlinear infinite system of coupled ODEs. For the latter we then consider a suitable Picard sequence.

Formally, by the Cauchy product for infinite series (i.e., the discrete convolution operation), we have
\begin{align}
(u^2)(t,x)=\sum_{n\in\mathbb Z^\nu}\sum_{\substack{n_1,n_2\in\mathbb Z^\nu\\n_1+n_2=n}}\prod_{j=1}^2c(t,n_j)e^{{\rm i}\langle n\rangle x}.
\end{align}
Assuming that $\partial$ and $\sum$ can be interchanged, we have
\begin{subequations}
\begin{align}
\label{a}u_t-u_{xxt}&=\sum_{n\in\mathbb Z^\nu}(1+\langle n\rangle^2)({\partial_tc})(t,n)e^{{\rm i}\langle n\rangle x},\\
\label{b}u_x&=\sum_{n\in\mathbb Z^\nu}{\rm i}\langle n\rangle c(t,n) e^{{\rm i}\langle n\rangle x},\\
\label{cc}uu_x&=\partial_x\left(\frac{u^2}{2}\right)=\sum_{n\in\mathbb Z^\nu}\frac{{\rm i}\langle n\rangle}{2}\sum_{\substack{n_1,n_2\in\mathbb Z^\nu\\n_1+n_2=n}}\prod_{j=1}^2c(t,n_j)e^{{\rm i}\langle n\rangle x}.
\end{align}
\end{subequations}
Substituting \eqref{a}-\eqref{cc} into \eqref{sl} yields
\[
\sum_{n\in\mathbb Z^\nu}\left\{(1+\langle n\rangle^2)(\partial_tc)(t,n)+{\rm i}\langle n\rangle c(t,n)+\frac{{\rm i}\langle n\rangle}{2}\sum_{\substack{n_1,n_2\in\mathbb Z^\nu\\n_1+n_2=n}}\prod_{j=1}^2c(t,n_j)\right\}e^{{\rm i}\langle n\rangle x}=0.
\]
By the orthogonality of $\{e^{{\rm i}\langle n\rangle x}: x\in\mathbb R\}$ relative to
\[<u,v>_{L^2(\mathbb R)}:=\lim_{L\rightarrow+\infty}\frac{1}{2L}\int_{-L}^{+L}u(x)\bar v(x){\rm d}x,\]
we see that \eqref{sl} is equivalent to the nonlinear infinite system of coupled ODEs
\begin{align}\label{ode}
\frac{{\rm d}}{{\rm d}t}c(t,n)-\lambda(n)c(t,n)=\frac{\lambda(n)}{2}\sum_{\substack{n_1,n_2\in\mathbb Z^\nu\\ n_1+n_2=n}}\prod_{j=1}^2c(t,n_j),
\end{align}
where
\begin{align}
\lambda(n)\triangleq\frac{-{\rm i}\langle n\rangle}{1+\langle n\rangle^2}\quad(\text{a purely imaginary number})
\end{align}
obeys the uniform bound $|\lambda(n)|\leq\frac{1}{2}$ for all $n\in\mathbb Z^\nu$. Here we use $``\frac{{\rm d}}{{\rm d}t}"$ rather than $``\partial_t"$ to emphasize that \eqref{ode} is an ODE for any given $n\in\mathbb Z^\nu$.

Motivated by an idea from \cite{KPV91JAMS}, we observe that $c(t,n)$ is determined by the following integral equation,
\begin{align}\label{ie}
c(t,n)=e^{\lambda(n)t}c(n)+\frac{\lambda(n)}{2}\int_0^te^{\lambda(n)(t-\tau)}\sum_{\substack{n_1,n_2\in\mathbb Z^\nu\\n_1+n_2=n}}\prod_{j=1}^2c(\tau,n_j) \, {\rm d}\tau.
\end{align}

To determine $c(t,n)$, we construct a Picard sequence $\{c_k(t,n)\}_{k\geq0}$ to approximate it. We choose $e^{\lambda(n)t}c(n)$ as the initial guess $c_0(t,n)$ and obtain $\{c_k(t,n)\}_{k\geq1}$ via the following iteration,
\begin{align}\label{pi}
c_k(t,n):=
c_0(t,n)+\frac{\lambda(n)}{2}\int_0^te^{\lambda(n)(t-\tau)}\sum_{\substack{n_1,n_2\in\mathbb Z^\nu\\n_1+n_2=n}}\prod_{j=1}^2c_{k-1}(\tau,n_j) \, {\rm d}\tau,\quad\forall k\geq1.
\end{align}

\subsection{Combinatorial Tree for the Picard Sequence}

Our goal is to show that the Picard sequence converges. To this end, it is convenient to express it via a combinatorial tree. This is the aim of the present subsection.

Set
\begin{align*}
\spadesuit^{(k)}&:=
\begin{cases}
\{0,1\}, & k=1;\\
\{0\}\cup(\spadesuit^{(k-1)})^2,&k\geq2.
\end{cases}
\end{align*}

For $\gamma^{(k)}=0\in\spadesuit^{(k)},k\geq1$, $
\mathfrak N^{(k,0)}:=\mathbb Z^\nu$;  for $\gamma^{(1)}=1\in\spadesuit^{(1)}$, $\mathfrak N^{(1,1)}:=(\mathbb Z^\nu)^2$;
for $\gamma^{(k)}=(\gamma_1^{(k-1)},\gamma_2^{(k-1)})\in(\spadesuit^{(k-1)})^2, k\geq2$, $\mathfrak N^{(k,\gamma^{(k)})}:=\prod_{j=1}^2\mathfrak N^{(k-1,\gamma_j^{(k-1)})}$.

Define a function $\mu: (\mathbb Z^\nu)^\ast\rightarrow\mathbb Z^\nu$ by letting $\mu(\clubsuit)=\sum_{j=1}^\ast\clubsuit_j$, where $\clubsuit=(\clubsuit_j\in\mathbb Z^\nu)_{1\leq j\leq \ast}$ for all $j=1,\cdots,\ast\in\mathbb N_{+}$.

For $k\geq1, \gamma^{(k)}=0\in\spadesuit^{(k)}, n=n^{(k)}\in\mathfrak N^{(k,0)}$,
\begin{align*}
\mathfrak C^{(k,0)}(n^{(k)})&:=c(n),\\
\mathfrak I^{(k,0)}(t,n^{(k)})&:=e^{\lambda(n)t},\\
\mathfrak F^{(k,0)}(n^{(k)})&:=1;
\end{align*}
for $k=1,\gamma^{(1)}=1\in\spadesuit^{(1)}$, $(n_1,n_2)=n^{(1)}\in\mathfrak N^{(1,1)}$,
\begin{align*}
\mathfrak C^{(1,1)}(n^{(1)})&:=\prod_{j=1}^2c(n_j),\\
\mathfrak I^{(1,1)}(t,n^{(1)})&:=\int_0^te^{\lambda(\mu(n^{(1)}))(t-\tau)}\prod_{j=1}^2e^{\lambda(n_j)\tau}{\rm d}\tau,\\
\mathfrak F^{(1,1)}(n^{(1)})&:=\frac{\lambda(\mu(n^{(1)}))}{2};
\end{align*}
for $k\geq2,\gamma^{(k)}=(\gamma_1^{(k-1)},\gamma_2^{(k-1)})\in(\spadesuit^{(k-1)})^2$,
\begin{align*}
\mathfrak C^{(k,\gamma^{(k)})}(n^{(k)})&:=\prod_{j=1}^2\mathfrak C^{(k-1,\gamma_j^{(k-1)})}(n_j^{(k-1)}),\\
\mathfrak I^{(k,\gamma^{(k)})}(t,n^{(k)})&:=\int_0^te^{\lambda(n)(t-\tau)}\prod_{j=1}^2\mathfrak I^{(k-1,\gamma_j^{(k-1)})}(\tau,n_j^{(k-1)}){\rm d}\tau,\\
\mathfrak F^{(k,\gamma^{(k)})}&:=\frac{\lambda(\mu(n^{(k)}))}{2}\prod_{j=1}^2\mathfrak F^{(k-1,\gamma_j^{(k-1)})}.
\end{align*}

\begin{lemm}\label{cthm}
The Picard sequence $\{c_k(t,n)\}$ can be reformulated as the following combinatorial tree,
\begin{align}\label{ct}
c_k(t,n)=\sum_{\gamma^{(k)}\in\spadesuit^{(k)}}\sum_{\substack{n^{(k)}\in\mathfrak N^{(k,\gamma^{(k)})}\\\mu(n^{(k)})=n}}
\mathfrak C^{(k,\gamma^{(k)})}(n^{(k)})\mathfrak I^{(k,\gamma^{(k)})}(t,n^{(k)})\mathfrak F^{(k,\gamma^{(k)})}(n^{(k)}),\quad \forall k\geq1.
\end{align}
\end{lemm}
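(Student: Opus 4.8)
The plan is to prove the identity \eqref{ct} by induction on $k$, letting the structure of the argument mirror exactly the two recursions at play: the one defining the Picard scheme \eqref{pi} on the one hand, and the ones defining the tree data $\mathfrak C$, $\mathfrak I$, $\mathfrak F$ on the other.

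For the base case $k=1$ I would split the outer sum according to $\spadesuit^{(1)}=\{0,1\}$. The branch $\gamma^{(1)}=0$ contributes, via the definitions of $\mathfrak C^{(1,0)}$, $\mathfrak I^{(1,0)}$, $\mathfrak F^{(1,0)}$, exactly $c(n)\,e^{\lambda(n)t}\cdot 1=c_0(t,n)$. The branch $\gamma^{(1)}=1$ ranges over $(n_1,n_2)\in(\mathbb Z^\nu)^2$ subject to $\mu(n^{(1)})=n_1+n_2=n$ and contributes $\frac{\lambda(n)}{2}\sum_{n_1+n_2=n}\bigl(\prod_{j=1}^2 c(n_j)\bigr)\int_0^t e^{\lambda(n)(t-\tau)}\prod_{j=1}^2 e^{\lambda(n_j)\tau}\,{\rm d}\tau$; recognizing $e^{\lambda(n_j)\tau}c(n_j)=c_0(\tau,n_j)$, this is precisely the integral term of \eqref{pi} at $k=1$. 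Adding the two branches recovers $c_1(t,n)$.

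For the inductive step I would assume \eqref{ct} at level $k-1$ and substitute it for each factor $c_{k-1}(\tau,n_j)$, $j=1,2$, inside the convolution in \eqref{pi}. Expanding the product over $j$ turns the two single sums into a sum over pairs $\gamma^{(k)}=(\gamma_1^{(k-1)},\gamma_2^{(k-1)})\in(\spadesuit^{(k-1)})^2$ and over $n^{(k)}=(n_1^{(k-1)},n_2^{(k-1)})\in\mathfrak N^{(k,\gamma^{(k)})}$. The three families of tree data then collapse exactly as prescribed by their recursions: $\prod_{j=1}^2\mathfrak C^{(k-1,\gamma_j^{(k-1)})}(n_j^{(k-1)})$ equals $\mathfrak C^{(k,\gamma^{(k)})}(n^{(k)})$; pulling $\int_0^t e^{\lambda(n)(t-\tau)}(\,\cdot\,)\,{\rm d}\tau$ onto $\prod_{j=1}^2\mathfrak I^{(k-1,\gamma_j^{(k-1)})}(\tau,n_j^{(k-1)})$ yields $\mathfrak I^{(k,\gamma^{(k)})}(t,n^{(k)})$; and the prefactor $\frac{\lambda(n)}{2}$ times $\prod_{j=1}^2\mathfrak F^{(k-1,\gamma_j^{(k-1)})}$ is $\mathfrak F^{(k,\gamma^{(k)})}$. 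The one point requiring genuine attention is the index constraint: the additivity $\mu(n^{(k)})=\mu(n_1^{(k-1)})+\mu(n_2^{(k-1)})$ converts the two inner constraints $\mu(n_j^{(k-1)})=n_j$ together with the convolution constraint $n_1+n_2=n$ into the single constraint $\mu(n^{(k)})=n$, so that the resulting sum is indexed exactly as in \eqref{ct}. Finally I would identify the leading term $c_0(t,n)$ of \eqref{pi} with the $\gamma^{(k)}=0$ branch of $\spadesuit^{(k)}=\{0\}\cup(\spadesuit^{(k-1)})^2$, which together with the pairs just described exhausts $\spadesuit^{(k)}$ and completes the induction.

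The main obstacle here is organizational rather than conceptual: one must keep the nested index sets $\mathfrak N^{(k,\gamma^{(k)})}$ and the branch labels $\gamma^{(k)}$ strictly aligned through the product expansion, and must justify interchanging the (infinite) convolution sums with the finite products and the time integral. At this stage the latter is most cleanly read as a formal rearrangement of the Cauchy product, exactly in the spirit of the ``formally'' qualifier used in the reduction step; its legitimacy as an honest identity of absolutely convergent series is then secured a posteriori by the decay estimates for the tree obtained in the subsequent sections.
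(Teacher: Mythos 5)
Your proposal is correct and follows essentially the same route as the paper: induction on $k$, with the $\gamma^{(k)}=0$ branch identified with $c_0(t,n)$, the base case $k=1$ handled via $\spadesuit^{(1)}=\{0,1\}$, and the inductive step obtained by substituting the level-$(k-1)$ tree representation into the Picard iteration \eqref{pi} and recombining the constraints $\mu(n_j^{(k-1)})=n_j$, $n_1+n_2=n$ into $\mu(n^{(k)})=n$ exactly as in the paper's proof. Your closing observation that the infinite-sum rearrangements are formal at this stage and justified a posteriori by the decay estimates is a fair reading of the paper, which performs the same manipulations without further comment.
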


\begin{proof}
We first notice that
\begin{align*}
c_0(t,n)=\sum_{\gamma^{(k)}=0\in\spadesuit^{(k)}}\sum_{\substack{n^{(k)}\in\mathfrak N^{(k,\gamma^{(k)})}\\\mu(n^{(k)})=n}}\mathfrak C^{(k,\gamma^{(k)})}(n^{(k)})\mathfrak I^{(k,\gamma^{(k)})}(t,n^{(k)})\mathfrak F^{(k,\gamma^{(k)})}(n^{(k)}),\quad \forall k\geq1.
\end{align*}
For $k=1$, we have
\begin{align*}
c_1(t,n)-c_0(t,n) & = \frac{\lambda(n)}{2}\int_0^te^{\lambda(n)(t-\tau)}\sum_{\substack{n_1,n_2\in\mathbb Z^\nu\\n_1+n_2=n}}\prod_{j=1}^2c_0(\tau,n_j) \, {\rm d}\tau \\
& = \sum_{\substack{n_1,n_2\in\mathbb Z^\nu\\n_1+n_2=n}}\prod_{j=1}^2c(n_j)\cdot\frac{\lambda(n)}{2}\cdot\int_0^te^{\lambda(n)(t-\tau)}\prod_{j=1}^2e^{\lambda(n_j)\tau} \, {\rm d}\tau \\
& = \sum_{\gamma^{(1)}=1\in\spadesuit^{(1)}}\sum_{\substack{n^{(1)}\in\mathfrak N^{(1,\gamma^{(1)})}\\\mu(n^{(1)})=n}}\mathfrak C^{(1,\gamma^{(1)})}(n^{(1)})\mathfrak I^{(1,\gamma^{(1)})}(t,n^{(1)})\mathfrak F^{(1,\gamma^{(1)})}(n^{(1)}).
\end{align*}
Hence we have
\begin{align*}
c_1(t,n) & = \left(\sum_{\gamma^{(1)}=0\in\spadesuit^{(1)}}+\sum_{\gamma^{(1)}=1\in\spadesuit^{(1)}}\right)\sum_{\substack{n^{(1)}\in\mathfrak N^{(1,\gamma^{(1)})}\\\mu(n^{(1)})=n}}\mathfrak C^{(1,\gamma^{(1)})}(n^{(1)})\mathfrak I^{(1,\gamma^{(1)})}(t,n^{(1)})\mathfrak F^{(1,\gamma^{(1)})}(n^{(1)}) \\
& = \sum_{\gamma^{(1)}\in\spadesuit^{(1)}}\sum_{\substack{n^{(1)}\in\mathfrak N^{(1,\gamma^{(1)})}\\\mu(n^{(1)})=n}}\mathfrak C^{(1,\gamma^{(1)})}(n^{(1)})\mathfrak I^{(1,\gamma^{(1)})}(t,n^{(1)})\mathfrak F^{(1,\gamma^{(1)})}(n^{(1)}).
\end{align*}
This shows that \eqref{ct} holds for $k=1$.

Let $k\geq2$ and assume that \eqref{ct} is true for $1,\cdots,k-1$. For $k$, we have
\begin{align*}
c_k(t,n)-c_0(t,n) & = \frac{\lambda(n)}{2}\int_0^te^{\lambda(n)(t-\tau)}\sum_{\substack{n_1,n_2\in\mathbb Z^\nu\\n_1+n_2=n}}\prod_{j=1}^2c_{k-1}(\tau,n_j) \, {\rm d}\tau \\
& = \frac{\lambda(n)}{2}\int_0^te^{\lambda(n)(t-\tau)}\sum_{\substack{n_1,n_2\in\mathbb Z^\nu\\n_1+n_2=n}}\prod_{j=1}^2\sum_{\gamma_j^{(k-1)}\in\spadesuit^{(k-1)}}\sum_{\substack{n_j^{(k-1)}\in\mathfrak N^{(k-1,\gamma_j^{(k-1)})}\\\mu(n_j^{(k-1)})=n_j}}\\
& \qquad \mathfrak C^{(k-1,\gamma_j^{(k-1)})}(n_j^{(k-1)})\mathfrak I^{(k,\gamma^{(k)})}(\tau,n_j^{(k-1)})\mathfrak F^{(k-1,\gamma_j^{(k-1)})}(n_j^{(k-1)}) \, {\rm d}\tau \\
& = \sum_{\substack{\gamma_j^{(k-1)} \in \spadesuit^{(k-1)} \\ j=1,2}}\sum_{\substack{n_1,n_2\in\mathbb Z^\nu\\n_1+n_2=n}}\sum_{\substack{n_j^{(k-1)}\in\mathfrak N^{(k-1,\gamma_j^{(k-1)})}\\\mu(n_j^{(k-1)})=n_j\\j=1,2}}\prod_{j=1}^2\mathfrak C^{(k-1,\gamma_j^{(k-1)})}(n_j^{(k-1)})\cdot \\
& \qquad \frac{\lambda(n)}{2}\prod_{j=1}^2\mathfrak F^{(k-1,\gamma_j^{(k-1)})}(n_j^{(k-1)})\cdot\int_0^te^{\lambda(n)(t-\tau)}\prod_{j=1}^2\mathfrak I^{(k-1,\gamma_j^{(k-1)})}(\tau,n_j^{(k-1)}) \, {\rm d}\tau \\
& = \sum_{\gamma^{(k)}\in(\spadesuit^{(k-1)})^2}\sum_{\substack{n^{(k)}\in\mathfrak N^{(k,\gamma^{(k)})}\\\mu(n^{(k)})=n}}\mathfrak C^{(k,\gamma^{(k)})}(n^{(k)})\mathfrak I^{(k,\gamma^{(k)})}(t,n^{(k)})\mathfrak F^{(k,\gamma^{(k)})}(n^{(k)}).
\end{align*}
Thus we have
\begin{align*}
c_k(t,n) & = \left( \sum_{\gamma^{(k)}=0\in\spadesuit^{(k)}}  +\sum_{\gamma^{(k)}\in(\spadesuit^{(k-1)})^2}\right)\sum_{\substack{n^{(k)}\in\mathfrak N^{(k,\gamma^{(k)})} \\\mu(n^{(k)})=n}}\mathfrak C^{(k,\gamma^{(k)})}(n^{(k)})\mathfrak I^{(k,\gamma^{(k)})}(t,n^{(k)})\mathfrak F^{(k,\gamma^{(k)})}(n^{(k)}) \\
& = \sum_{\gamma^{(k)}\in\spadesuit^{(k)}}\sum_{\substack{n^{(k)}\in\mathfrak N^{(k,\gamma^{(k)})}\\\mu(n^{(k)})=n}}
\mathfrak C^{(k,\gamma^{(k)})}(n^{(k)})\mathfrak I^{(k,\gamma^{(k)})}(t,n^{(k)})\mathfrak F^{(k,\gamma^{(k)})}(n^{(k)}).
\end{align*}
This shows that \eqref{ct} holds for $k$. By induction, it follows that \eqref{ct} is true for all $k\geq1$. This completes the proof of Lemma \ref{cthm}.
\end{proof}

\subsection{Uniform Exponential Decay of the Picard Sequence}

In this subsection we show with the help of the combinatorial tree established in the previous subsection that the Picard sequence for the Fourier coefficients obeys a uniform exponential decay estimate.

Indeed, we have the following result:

\begin{lemm}\label{expthm}
Assume that the initial Fourier coefficients $c$ satisfy the exponential decay property \eqref{exd}. With the constants $\mathcal A$ and $\rho$ from \eqref{exd} and the dimension $\nu$, set
\begin{equation}\label{e.newconstant}
\mathcal B \triangleq 2 \mathcal A (6 \rho^{-1})^\nu
\end{equation}
and
\begin{equation}\label{e.locextime}
{\mathcal L}_2 \triangleq \frac{\rho^\nu}{2\mathcal A6^\nu}.
\end{equation}
Then, we have
\begin{equation}\label{cktnunifupperbound}
\sup_{\substack{t\in[0,{\mathcal L}_2] \\ k \ge 0}} |c_k(t,n)|\leq\mathcal Be^{-\frac{\rho}{2}|n|}
\end{equation}
for every $n \in \mathbb Z^\nu$.
\end{lemm}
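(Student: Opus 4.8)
The plan is to collapse the entire pointwise statement into a single scalar inequality for the weighted $\ell^1$ norm of the Fourier data. For $k\ge 0$ and $t\ge 0$ I set
\[
M_k(t):=\sum_{n\in\mathbb Z^\nu}e^{\frac{\rho}{2}|n|}\,|c_k(t,n)|,\qquad \Sigma:=\sum_{n\in\mathbb Z^\nu}e^{-\frac{\rho}{2}|n|}.
\]
Since $\sup_n e^{\frac{\rho}{2}|n|}|c_k(t,n)|\le M_k(t)$, the bound \eqref{cktnunifupperbound} follows at once once I prove $M_k(t)\le\mathcal B$ for all $k\ge0$ and all $t\in[0,{\mathcal L}_2]$. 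The whole point of choosing this norm is that the weight $e^{\frac{\rho}{2}|\cdot|}$ is submultiplicative, $e^{\frac{\rho}{2}|n_1+n_2|}\le e^{\frac{\rho}{2}|n_1|}e^{\frac{\rho}{2}|n_2|}$, so the corresponding space is a convolution \emph{algebra}; this is precisely the mechanism that keeps the decay rate from degrading under the quadratic nonlinearity.

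First I would turn the Picard recursion \eqref{pi} into a recursion for $M_k$. Using that $\lambda$ is purely imaginary (so $|e^{\lambda(n)s}|=1$) together with $|\lambda(n)/2|\le\tfrac14$, I take absolute values in \eqref{pi}, multiply by $e^{\frac{\rho}{2}|n|}$, and sum over $n$. For the nonlinear term, interchanging the sums (legitimate by Tonelli, all summands being nonnegative) and using submultiplicativity gives
\[
\sum_{n}e^{\frac{\rho}{2}|n|}\sum_{n_1+n_2=n}|c_{k-1}(\tau,n_1)|\,|c_{k-1}(\tau,n_2)|\le\Big(\sum_{m}e^{\frac{\rho}{2}|m|}|c_{k-1}(\tau,m)|\Big)^2=M_{k-1}(\tau)^2.
\]
For the linear term, $|c_0(t,n)|=|c(n)|\le\mathcal A e^{-\rho|n|}$ by \eqref{exd}, whence $M_0(t)\le\mathcal A\Sigma$. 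Altogether I obtain the clean scalar recursion $M_k(t)\le\mathcal A\Sigma+\tfrac14\int_0^t M_{k-1}(\tau)^2\,{\rm d}\tau$ with $M_0(t)\le\mathcal A\Sigma$. (If one prefers to route through Lemma \ref{cthm}, the same recursion drops out term by term: bounding $|\mathfrak C^{(k,\gamma)}|$ by $\mathcal A^{L}\prod_i e^{-\rho|m_i|}$ over the $L$ leaves, $|\mathfrak F^{(k,\gamma)}|$ by $4^{-(L-1)}$, and $|\mathfrak I^{(k,\gamma)}|$ by the nested time integral of unit-modulus kernels, then summing the free leaf labels against the weight produces a factor $\Sigma^{L}$, and the resulting shape-recursion collapses to the displayed recursion.)

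The second ingredient is the elementary estimate
\[
\Sigma=\Big(\sum_{m\in\mathbb Z}e^{-\frac{\rho}{2}|m|}\Big)^\nu\le\Big(1+2\int_0^\infty e^{-\frac{\rho}{2}x}\,{\rm d}x\Big)^\nu=\Big(1+\tfrac{4}{\rho}\Big)^\nu\le(6\rho^{-1})^\nu,
\]
valid for $0<\rho\le1$ by monotone (integral) comparison of the tail sum. This is where the constant $(6\rho^{-1})^\nu$ in \eqref{e.newconstant} comes from; in particular $M_0\le\mathcal A\Sigma\le\mathcal A(6\rho^{-1})^\nu=\tfrac12\mathcal B$.

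Finally I close the recursion by induction on $k$. The two constants are tuned so that $\mathcal B{\mathcal L}_2=1$ (immediate from \eqref{e.newconstant} and \eqref{e.locextime}) and $\mathcal A\Sigma\le\tfrac12\mathcal B$. The base case $M_0\le\tfrac12\mathcal B\le\mathcal B$ is settled; assuming $M_{k-1}(\tau)\le\mathcal B$ for all $\tau\in[0,{\mathcal L}_2]$, the recursion yields, for $t\in[0,{\mathcal L}_2]$,
\[
M_k(t)\le\mathcal A\Sigma+\frac14\int_0^{{\mathcal L}_2}\mathcal B^2\,{\rm d}\tau\le\frac12\mathcal B+\frac14{\mathcal L}_2\mathcal B^2=\frac12\mathcal B+\frac14\mathcal B=\frac34\mathcal B\le\mathcal B,
\]
completing the induction, so that $|c_k(t,n)|\le e^{-\frac{\rho}{2}|n|}M_k(t)\le\mathcal Be^{-\frac{\rho}{2}|n|}$ uniformly in $k$ and $t\in[0,{\mathcal L}_2]$. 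The one genuine obstacle is conceptual rather than computational: a direct pointwise induction fails, because convolving two sequences of decay rate $\tfrac{\rho}{2}$ produces an extra factor growing polynomially in $|n|$ (from the lattice points on $\{n_1+n_2=n\}$ with $|n_1|+|n_2|=|n|$), which cannot be reabsorbed into the rate $\tfrac{\rho}{2}$. Passing to the weighted-$\ell^1$ algebra norm, with weight $\tfrac{\rho}{2}$ chosen strictly below the initial rate $\rho$ so that $M_0$ stays finite, is exactly what removes this loss, after which only the constant bookkeeping $\mathcal B{\mathcal L}_2=1$ and $\mathcal A\Sigma\le\tfrac12\mathcal B$ remains.
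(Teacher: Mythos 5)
Your proof is correct, and it takes a genuinely different route from the paper. The paper establishes \eqref{cktnunifupperbound} through the combinatorial tree machinery: the representation of $c_k$ in Lemma~\ref{cthm}, the per-node estimates on $\mathfrak C$, $\mathfrak I$, $\mathfrak F$ in Lemma~\ref{ind} (with the factorial-type denominators $\mathfrak D(\gamma^{(k)})$ tracking the nested time integrals), the bookkeeping identities $\sigma(\gamma^{(k)})=\ell(\gamma^{(k)})+1$ and $\dim_{\mathbb Z^\nu}\mathfrak N^{(k,\gamma^{(k)})}=\sigma(\gamma^{(k)})$, and finally the sharp summation bound $\diamondsuit_k\leq2$ for $0<\flat\leq\frac14$ from Lemma~\ref{l.Lemma2.4}(4), applied with $\flat=2^{-1}\mathcal A(6\rho^{-1})^\nu t$, which is exactly $\leq\frac14$ when $t\leq\mathcal L_2$. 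You replace all of this with the observation that the weight $e^{\frac{\rho}{2}|\cdot|}$ makes the weighted $\ell^1$ space a convolution algebra, so the Picard recursion collapses to the scalar inequality $M_k(t)\leq\mathcal A\Sigma+\frac14\int_0^tM_{k-1}(\tau)^2\,{\rm d}\tau$, closed by induction via the tuned identities $\mathcal A\Sigma\leq\frac12\mathcal B$ and $\mathcal B\mathcal L_2=1$; your bound on $\Sigma$ by integral comparison reproduces the same $(6\rho^{-1})^\nu$ as Lemma~\ref{l.Lemma2.4}(2), so the constants come out identical (your $\frac34\mathcal B$ even shows a little slack). Your argument is shorter, self-contained, and strictly stronger in output — it controls the weighted $\ell^1$ norm, not just the pointwise weighted supremum, which would also streamline the absolute-convergence checks in Section~\ref{ea}; and the rate-halving from $\rho$ to $\frac{\rho}{2}$ plays the same role as the paper's splitting $e^{-\rho|n^{(k)}|}\leq e^{-\frac{\rho}{2}|n^{(k)}|}e^{-\frac{\rho}{2}|n|}$. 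What the paper's heavier tree formalism buys is structural: it parallels \cite{DG16JAMS} and \cite{DLX21ARXIV}, its optimality is pinned down in Remark~\ref{r.optimalchoices}, and it is reused wholesale for Theorem~B, where only the estimate on $\mathfrak C$ is redone (Lemma~\ref{skd}) while $\mathfrak I$, $\mathfrak F$, and the $\diamondsuit_k$ bound carry over — though your algebra-norm scheme would adapt there too, since the polynomial weights $(1+|n|)^{-\mathtt r/2}$ form a convolution algebra with constant $\mathfrak b(\frac{\mathtt r}{2};\nu)$ by essentially the computation in \eqref{hb}.
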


To prove Lemma \ref{expthm}, we need the following lemmas.
\begin{lemm}\label{ind}
For all $k\geq1$ we have
\begin{align}
\label{c}|\mathfrak C^{(k,\gamma^{(k)})}(n^{(k)})|&\leq\mathcal A^{\sigma(\gamma^{(k)})}e^{-\rho|n^{(k)}|},\\
\label{i}|\mathfrak I^{(k,\gamma^{(k)})}(t,n^{(k)})|&\leq\frac{t^{\ell(\gamma^{(k)})}}{\mathfrak D(\gamma^{(k)})},\\
\label{f}|\mathfrak F^{(k,\gamma^{(k)})}(n^{(k)})|&\leq\frac{1}{2^{\ell(\gamma^{(k)})}}\leq1,
\end{align}
where $\sigma(0)=1,\ell(0)=0,\mathfrak D(0)=1$ ; $\sigma(1)=2,\ell(1)=1,\mathfrak D(1)=1$; for $k\geq2,\gamma^{(k)}=(\gamma_1^{(k-1)},\gamma_2^{(k-1)})\in(\spadesuit^{(k-1)})^2$,
\begin{align*}
\sigma(\gamma^{(k)})&=\sum_{j=1}^2\sigma(\gamma_j^{(k-1)}),\\
\ell(\gamma_j^{(k)})&=1+\sum_{j=1}^2\ell(\gamma_j^{(k-1)}),\\
\mathfrak D(\gamma^{(k)})&=\ell(\gamma^{(k)})\prod_{j=1}^2\mathfrak D^{(k-1,\gamma_j^{(k-1)})}(n_j^{(k-1)}),
\end{align*}
and $|n^{(k)}|=\sum_{j=1}^\ast|n_j^{(k-1)}|$ if $n^{(k)}=(n_j^{(k-1)})_{1\leq j\leq\ast}$.
\end{lemm}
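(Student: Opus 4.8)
The plan is to establish the three estimates \eqref{c}, \eqref{i}, and \eqref{f} simultaneously by a single induction on the level $k$, driven by the recursive structure that defines $\mathfrak C$, $\mathfrak I$, and $\mathfrak F$ together with the companion counters $\sigma$, $\ell$, $\mathfrak D$. Two elementary properties of the symbol $\lambda$ do all of the analytic work. First, $\lambda(n)$ is purely imaginary, so that $|e^{\lambda(n)s}|=1$ for every real $s$; this is what renders all the oscillatory exponential factors harmless, no matter their sign or the size of the frequencies. Second, the uniform bound $|\lambda(n)|\le\tfrac12$ controls the feedback factor in $\mathfrak F$. Everything beyond these two inputs is bookkeeping that matches the arithmetic of the integrals and products against the recursions for $\sigma$, $\ell$, $\mathfrak D$.

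For the base case $k=1$ I would treat the two admissible labels separately. When $\gamma^{(1)}=0$ the three quantities are $c(n)$, $e^{\lambda(n)t}$, and $1$: the bound on $\mathfrak C$ is exactly the hypothesis \eqref{exd} (with $p=2$, so $\mathcal A^{1/(p-1)}=\mathcal A=\mathcal A^{\sigma(0)}$), the bound on $\mathfrak I$ follows from $|e^{\lambda(n)t}|=1=t^{0}$, and the bound on $\mathfrak F$ is trivial. When $\gamma^{(1)}=1$, multiplicativity of $\mathfrak C$ together with \eqref{exd} and the additivity $|n^{(1)}|=|n_1|+|n_2|$ gives $|\mathfrak C^{(1,1)}(n^{(1)})|\le\mathcal A^{2}e^{-\rho|n^{(1)}|}$, matching $\sigma(1)=2$; the integrand defining $\mathfrak I^{(1,1)}$ has modulus $1$, whence $|\mathfrak I^{(1,1)}(t,n^{(1)})|\le t=t^{\ell(1)}/\mathfrak D(1)$; and $|\mathfrak F^{(1,1)}(n^{(1)})|=|\lambda(\mu(n^{(1)}))|/2\le\tfrac14\le\tfrac12=2^{-\ell(1)}$.

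For the inductive step, fix $k\ge2$ and assume \eqref{c}--\eqref{f} at level $k-1$. The leaf label $\gamma^{(k)}=0$ is handled exactly as in the base case. For $\gamma^{(k)}=(\gamma_1^{(k-1)},\gamma_2^{(k-1)})$ I would push absolute values through the defining products and integral. For $\mathfrak C$ this yields $\prod_{j}|\mathfrak C^{(k-1,\gamma_j^{(k-1)})}(n_j^{(k-1)})|\le\prod_j\mathcal A^{\sigma(\gamma_j^{(k-1)})}e^{-\rho|n_j^{(k-1)}|}$, which equals $\mathcal A^{\sigma(\gamma^{(k)})}e^{-\rho|n^{(k)}|}$ by the recursion for $\sigma$ and the additivity of $|\cdot|$. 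For $\mathfrak F$ the new factor $\lambda(\mu(n^{(k)}))/2$ contributes at most $\tfrac12$ in modulus, so multiplying the two inductive bounds $2^{-\ell(\gamma_j^{(k-1)})}$ gives $2^{-(1+\ell(\gamma_1^{(k-1)})+\ell(\gamma_2^{(k-1)}))}=2^{-\ell(\gamma^{(k)})}$.

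The step I expect to require the most care is the estimate \eqref{i} for $\mathfrak I$, since it is the one that generates the factorial-type denominator $\mathfrak D$. Writing $\ell_j=\ell(\gamma_j^{(k-1)})$ and $D_j=\mathfrak D(\gamma_j^{(k-1)})$, the modulus-$1$ property of $e^{\lambda(n)(t-\tau)}$ together with the inductive hypothesis gives
\[
|\mathfrak I^{(k,\gamma^{(k)})}(t,n^{(k)})|\le\int_0^t\frac{\tau^{\ell_1+\ell_2}}{D_1D_2}\,{\rm d}\tau=\frac{1}{D_1D_2}\cdot\frac{t^{\ell_1+\ell_2+1}}{\ell_1+\ell_2+1}.
\]
The key observation is that the exponent $\ell_1+\ell_2+1$ equals $\ell(\gamma^{(k)})$, while the integration $\int_0^t\tau^{m}\,{\rm d}\tau=t^{m+1}/(m+1)$ produces precisely the new denominator factor $\ell_1+\ell_2+1=\ell(\gamma^{(k)})$ that the recursion $\mathfrak D(\gamma^{(k)})=\ell(\gamma^{(k)})\prod_j\mathfrak D(\gamma_j^{(k-1)})$ records. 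Hence the right-hand side is exactly $t^{\ell(\gamma^{(k)})}/\mathfrak D(\gamma^{(k)})$, closing the induction. No genuine difficulty arises beyond keeping these combinatorial counters aligned with the integral, and the purely imaginary nature of $\lambda$ is what guarantees the oscillatory factors never interfere with the estimate.
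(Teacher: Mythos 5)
Your proposal is correct and takes essentially the same route as the paper's proof: a single induction on $k$ over the tree labels, in which the purely imaginary nature of $\lambda$ gives $|e^{\lambda(n)s}|=1$, the uniform bound $|\lambda(n)|\le\frac{1}{2}$ controls the factor in $\mathfrak F$, and the elementary integration $\int_0^t\tau^{\ell_1+\ell_2}\,{\rm d}\tau=t^{\ell_1+\ell_2+1}/(\ell_1+\ell_2+1)$ produces exactly the new factor $\ell(\gamma^{(k)})$ recorded by the recursion for $\mathfrak D$. The only (immaterial) cosmetic difference is that you bound $|\mathfrak F^{(1,1)}|$ by $\frac{1}{4}$ where the paper settles for $\frac{1}{2}=2^{-\ell(1)}$.
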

\begin{proof}
For $k\geq1, 0=\gamma^{(k)}\in\spadesuit^{(k)}, n=n^{(k)}\in\mathfrak N^{(k,0)}$,
\begin{align*}
|\mathfrak C^{(k,0)}(n^{(k)})|&=|c(n)|\leq\mathcal Ae^{-\rho|n|}=\mathcal A^{\sigma(0)}e^{-\rho|n^{(k)}|};\\
|\mathfrak I^{(k,0)}(t,n^{(k)})|&=|e^{\lambda(n)t}|\leq1=\frac{t^{\ell(0)}}{\mathfrak D(0)};\\
|\mathfrak F^{(k,0)}(n^{(k)})|&=1=\frac{1}{2^{\ell(0)}}\leq1.
\end{align*}
For $k=1, 1=\gamma^{(1)}\in\spadesuit^{(1)}, (n_1,n_2)=n^{(1)}\in\mathfrak N^{(1,1)}$,
\begin{align*}
|\mathfrak C^{(1,1)}(n_1,n_2)|&=\prod_{j=1}^2|c(n_j)|\leq\prod_{j=1}^2\mathcal Ae^{-\rho|n_j|}=\mathcal A^2e^{-\rho(|n_1|+|n_2|)}=\mathcal A^{\sigma(1)}e^{-\rho|n^{(1)}|};\\
|\mathfrak I^{(1,1)}(t,n^{(1)})|&\leq\int_0^t|e^{\lambda(\mu(n^{(1)}))(t-\tau)}|\prod_{j=1}^2|e^{\lambda(n_j)\tau}|{\rm d}\tau=t=\frac{t^{\ell(1)}}{\mathfrak D(1)};\\
|\mathfrak F^{(1,1)}(n^{(1)})|&=\frac{|\lambda(\mu(n^{(1)}))|}{2}\leq|\lambda(\mu(n^{(1)}))|\leq\frac{1}{2}=\frac{1}{2^{\ell(1)}}<1.
\end{align*}
Hence \eqref{c}--\eqref{f} hold for $k=1$.

Let $k\geq2$ and assume that they are true for $1,\cdots,k-1$.  For $k, (\gamma_1^{(k-1)},\gamma_2^{(k-1)})=\gamma^{(k)}\in(\spadesuit^{(k-1)})^2$ and $ (n_1^{(k-1)},n_2^{(k-1)})=n^{(k)}\in\prod_{j=1}^2\mathfrak N^{(k-1,\gamma_j^{(k-1)})}$, one can derive that
\begin{align*}
|\mathfrak C^{(k,\gamma^{(k)})}(n^{(k)})|&=\prod_{j=1}^2|\mathfrak C^{(k-1,\gamma_j^{(k-1)})}(n_j^{(k-1)})|\\
&\leq\prod_{j=1}^2\mathcal A^{\sigma(\gamma_j^{(k-1)})}e^{-\rho|n_j^{(k-1)}|}\\
&=\mathcal A^{\sum_{j=1}^2\sigma(\gamma_j^{(k-1)})}e^{-\rho\sum_{j=1}^2|n_j^{(k-1)}|}\\
&=\mathcal A^{\sigma(\gamma^{(k)})}e^{-\rho|n^{(k)}|};\\
|\mathfrak I^{(k,\gamma^{(k)})}(t,n^{(k)})|&\leq\int_0^t|e^{\lambda(\mu(n^{(k)}))(t-\tau)}|\prod_{j=1}^2|\mathfrak I^{(k-1,\gamma_j^{(k-1)})}(\tau,n_j^{(k-1)})|{\rm d}\tau\\
&\leq\int_0^t\prod_{j=1}^2\frac{\tau^{\ell(\gamma_j^{(k-1)})}}{\mathfrak D(\gamma_j^{(k-1)})}{\rm d}\tau\\
&=\frac{t^{1+\sum_{j=1}^2}\ell(\gamma_j^{(k-1)})}{(1+\sum_{j=1}^2\ell(\gamma_j^{(k-1)}))\prod_{j=1}^2\mathfrak D(\gamma_j^{(k-1)})}\\
&=\frac{t^{\ell(\gamma^{(k)})}}{\mathfrak D(\gamma^{(k)})};\\
|\mathfrak F^{(k,\gamma^{(k)})}(n^{(k)})|&\leq\frac{|\lambda(\mu(n^{(k)}))|}{2}\prod_{j=1}^2|\mathfrak F^{(k-1,\gamma_j^{(k-1)})}(n_j^{(k-1)})|\\
&\leq\frac{1}{2}\prod_{j=1}^2\frac{1}{2^{\ell(\gamma_j^{(k-1)})}}\\
&=\frac{1}{2^{1+\sum_{j=1}^2\ell(\gamma_j^{(k-1)})}}\\
&=\frac{1}{2^{\ell(\gamma^{(k)})}}\\
&<1.
\end{align*}
These imply that \eqref{c}--\eqref{f} are true for $k$. By induction, they hold for all $k\geq1$.  This completes the proof of Lemma \ref{ind}.
\end{proof}

\begin{lemm}\label{l.Lemma2.4}
\begin{enumerate}
  \item For all $k\geq1$,
  \begin{align}\label{1}
  \sigma(\gamma^{(k)})=\ell(\gamma^{(k)})+1.
  \end{align}
  \item If $0<\rho\leq 1$, then
  \begin{align}\label{2}
  \sum_{n\in\mathbb Z}e^{-\rho|n|}\leq 3\rho^{-1}.
  \end{align}
   \item Let $\dim_{\mathbb Z^\nu}\mathfrak N^{(k,\gamma^{(k)})}$ be the number of components per $\mathbb Z^\nu$. Then
   \begin{align}\label{3}
   \dim_{\mathbb Z^\nu}\mathfrak N^{(k,\gamma^{(k)})}=\sigma(\gamma^{(k)}).
   \end{align}
   \item If $0<\flat\leq\frac{1}{4}$, then
   \begin{align}\label{4}
  \diamondsuit_k\triangleq\sum_{\gamma^{(k)}\in\spadesuit^{(k)}}\frac{\flat^{\ell(\gamma^{(k)})}}{\mathfrak D(\gamma^{(k)})}\leq2.
   \end{align}
\end{enumerate}
\end{lemm}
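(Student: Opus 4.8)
The plan is to prove the four assertions separately, since they are essentially independent, while noting that all of them flow from the recursive structure of $\spadesuit^{(k)}$ and the recursions for $\sigma$, $\ell$, and $\mathfrak D$ set up before the statement. Parts (1), (2), and (3) are quick and I would dispatch them first; part (4) is the substantive one and the real purpose of the lemma, so it is where the care will be needed. The key mechanism for (4) will be to collapse the sum over the tree into a closed recursive inequality and then identify a stable fixed point.

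For the routine parts I proceed as follows. Part (1) is an induction on $k$ that runs parallel to the definition of $\sigma$: the base values $\sigma(0)=1=\ell(0)+1$ and $\sigma(1)=2=\ell(1)+1$ are immediate, the identity holds trivially at the node $\gamma^{(k)}=0$ for every $k$, and for $\gamma^{(k)}=(\gamma_1^{(k-1)},\gamma_2^{(k-1)})$ the relations $\sigma(\gamma^{(k)})=\sigma(\gamma_1^{(k-1)})+\sigma(\gamma_2^{(k-1)})$ and $\ell(\gamma^{(k)})=1+\ell(\gamma_1^{(k-1)})+\ell(\gamma_2^{(k-1)})$ combine with the inductive hypothesis to give $\sigma(\gamma^{(k)})=(\ell(\gamma_1^{(k-1)})+1)+(\ell(\gamma_2^{(k-1)})+1)=\ell(\gamma^{(k)})+1$. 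Part (3) is the very same induction: $\dim_{\mathbb Z^\nu}$ counts the number of $\mathbb Z^\nu$-factors in $\mathfrak N^{(k,\gamma^{(k)})}$, which obeys exactly the additive recursion of $\sigma$ under the product defining $\mathfrak N^{(k,\gamma^{(k)})}$, with matching base values $1$ and $2$. Part (2) is a direct geometric series estimate: $\sum_{n\in\mathbb Z}e^{-\rho|n|}=1+2\sum_{n=1}^{\infty}e^{-\rho n}=1+2/(e^{\rho}-1)$, and the convexity bound $e^{\rho}-1\geq\rho$ gives $\leq 1+2\rho^{-1}$, which is at most $3\rho^{-1}$ because $0<\rho\leq1$ forces $1\leq\rho^{-1}$.

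The main work is part (4), and the plan is to peel off the root of the tree. For $k\geq2$ the node $\gamma^{(k)}=0$ contributes exactly $1$, while over the remaining nodes $\gamma^{(k)}=(\gamma_1,\gamma_2)\in(\spadesuit^{(k-1)})^2$ I would use $\ell(\gamma^{(k)})=1+\ell(\gamma_1)+\ell(\gamma_2)$ together with $\mathfrak D(\gamma^{(k)})=\ell(\gamma^{(k)})\,\mathfrak D(\gamma_1)\mathfrak D(\gamma_2)\geq\mathfrak D(\gamma_1)\mathfrak D(\gamma_2)$, valid since $\ell(\gamma^{(k)})\geq1$, to bound each summand by $\flat\cdot\frac{\flat^{\ell(\gamma_1)}}{\mathfrak D(\gamma_1)}\cdot\frac{\flat^{\ell(\gamma_2)}}{\mathfrak D(\gamma_2)}$. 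The double sum then factors as a product, yielding the recursive inequality $\diamondsuit_k\leq 1+\flat\,\diamondsuit_{k-1}^2$. With the base case $\diamondsuit_1=1+\flat\leq\frac54$, a short induction closes the argument: the map $x\mapsto 1+\flat x^2$ is increasing and, for $\flat\leq\frac14$, satisfies $1+\flat\cdot 2^2\leq 2$, so $2$ is a stable upper bound and $\diamondsuit_{k-1}\leq2$ forces $\diamondsuit_k\leq2$.

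The one point that requires deliberate handling, and the place I expect the main obstacle, is the discarding of the factor $1/\ell(\gamma^{(k)})$ hidden in $\mathfrak D(\gamma^{(k)})$. Throwing it away is harmless for an upper bound, but it is precisely what decouples the double sum into a clean product and thereby produces the quadratic recursion; it is also what makes the constant $2$ (tied to the fixed point of $x\mapsto 1+\flat x^2$ at $\flat=\frac14$) the natural target rather than something sharper. I would therefore be careful to verify that the remaining denominator $\mathfrak D(\gamma_1)\mathfrak D(\gamma_2)$ is exactly the product feeding back into $\diamondsuit_{k-1}^2$, so that the induction hypothesis applies without loss.
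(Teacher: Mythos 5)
Your proposal is correct and matches the paper's proof essentially step for step: the same inductions for (1) and (3), and for (4) the same strategy of separating the root node $\gamma^{(k)}=0$ (contributing $1$), discarding the factor $1/\ell(\gamma^{(k)})\leq 1$ from $\mathfrak D(\gamma^{(k)})$ so the double sum factors, and closing with the induction $\diamondsuit_k\leq1+\flat\,\diamondsuit_{k-1}^2\leq1+4\flat\leq2$ from the base case $\diamondsuit_1=1+\flat\leq\frac54$. The only (cosmetic) deviation is in (2), where you bound the same quantity $1+\frac{2}{e^{\rho}-1}=\frac{e^{\rho}+1}{e^{\rho}-1}$ using $e^{\rho}-1\geq\rho$ together with $1\leq\rho^{-1}$, rather than the paper's auxiliary function $z(y)=(3-y)e^{y}-(3+y)$ and its derivative analysis; your route is equally valid and slightly more elementary.
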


\begin{proof}
\begin{enumerate}

\item Since $\sigma(0)=1,\ell(0)=0; \sigma(1)=2,\ell(1)=1$, then $\sigma(0)=\ell(0)+1$ and $\sigma(1)=\ell(1)+1$. Hence \eqref{1} holds for $k=1$. Let $k\geq2$. Assume that is is true for $1,\cdots,k-1$. For $k$ and $(\gamma_1^{(k-1)},\gamma_2^{(k-1)})=\gamma^{(k)}\in(\spadesuit^{(k-1)})^2$, one has
      \begin{align*}
      \sigma(\gamma^{(k)})
      =\sum_{j=1}^2\sigma(\gamma_j^{(k-1)})
      =\sum_{j=1}^2(\ell_j^{(k-1)}+1)
      =1+\ell(\gamma^{(k)}).
      \end{align*}
Hence \eqref{1} holds for all $k\geq1$ by induction.

\item Let $z(y) := (3-y)e^y-(3+y), 0<y\leq1$. After a simple calculation, we find
\begin{align*}
   z^\prime(y)=(2-y)e^y-1,\quad z^{\prime\prime}(y)=(1-y)e^y.
\end{align*}
  Since $0<y\leq1$, then $z^{\prime\prime}(y)\geq0$. Hence $z^\prime$ is monotonically increasing and $z^\prime(y)\geq z^\prime(0)=1>0$. Similarly one can see that $z(y)\geq z(0)=0$, that is $\frac{e^y+1}{e^{y}-1}\leq 3y^{-1}$ provided that $0<y\leq1$. By the symmetry of $\mathbb Z$ and $0<\rho\leq1$,
  \begin{align}
  \sum_{n\in\mathbb Z}e^{-\rho|n|}=2\sum_{n=0}^{\infty}e^{-\rho n}-1\stackrel{(\rho>0)}{=}\frac{2}{1-e^{-\rho}}-1=\frac{e^{\rho}+1}{e^{\rho}-1}\stackrel{(0<\rho\leq1)}{\leq}3\rho^{-1}.
  \end{align}

\item For $k\geq1, \mathfrak N^{(k,0)}=\mathbb Z^\nu$, we know that $\dim_{\mathbb Z^\nu}\mathfrak N^{(k,0)}=1=\sigma(0)$. Also $\mathfrak N^{(1,1)}=(\mathbb Z^\nu)^2$, hence $\dim_{\mathbb Z^\nu}\mathfrak N^{(1,1)}=2=\sigma(1)$. This implies that \eqref{3} holds for $k=1$. Let $k\geq2$. Assume that it is true for $1,\cdots,k-1$. For $k$, $\gamma^{(k)}=(\gamma_1^{(k-1)},\gamma_2^{(k-1)})\in(\spadesuit^{(k-1)})^2$, and $\mathfrak N^{(k,\gamma^{(k)})}=\prod_{j=1}^2\mathfrak N^{(k-1,\gamma_j^{(k-1)})}$, we have
      \begin{align*}
      \dim_{\mathbb Z^\nu}\mathfrak N^{(k,\gamma^{(k)})}=\sum_{j=1}^2\dim_{\mathbb Z^\nu}\mathfrak N^{(k-1,\gamma_j^{(k-1)})}=\sum_{j=1}^2\sigma(\gamma_j^{(k-1)})=\sigma(\gamma^{(k)}).
      \end{align*}
      By induction, \eqref{3} holds for all $k\geq1$.

\item For $k=1$, by the definition of $\spadesuit^{(1)}, \ell$ and $\mathfrak D$, we have
      \begin{align*}
      \diamondsuit_1=\frac{\flat^{\ell(0)}}{\mathfrak D(0)}+\frac{\flat^{\ell(1)}}{\mathfrak D(1)}=1+\flat\leq\frac{5}{4}\leq2.
      \end{align*}
      Let $k\geq2$. Suppose that \eqref{4} holds for $1,\cdots,k-1$. For $k$, we first have
\begin{align*}
\sum_{\gamma^{(k)}=(\gamma_1^{(k-1)},\gamma_2^{(k-1)})\in(\spadesuit^{(k-1)})^2}\frac{\flat^{\ell(\gamma^{(k)})}}{\mathfrak D(\gamma^{(k)})} & = \sum_{\substack{\gamma_j^{(k-1)}\in\spadesuit^{(k-1)}\\j=1,2}}
      \frac{\flat^{1+\sum_{j=1}^2\ell(\gamma_j^{(k-1)})}}{\left(1+\sum_{j=1}^2\gamma_j^{(k-1)}\right)\prod_{j=1}^2\mathfrak D(\gamma_j^{(k-1)})} \\
& \leq \flat\sum_{\substack{\gamma_j^{(k-1)}\in\spadesuit^{(k-1)}\\j=1,2}}\prod_{j=1}^2\frac{\flat^{\ell(\gamma_j^{(k-1)})}}{\mathfrak D^{(k-1,\gamma_j^{(k-1)})}(n_j^{(k-1)})} \\
& = \flat\prod_{j=1}^2\sum_{\gamma_j^{(k-1)}\in\spadesuit^{(k-1)}}\frac{\flat^{\ell(\gamma_j^{(k-1)})}}{\mathfrak D^{(k-1,\gamma_j^{(k-1)})}(n_j^{(k-1)})} \\
& \leq 2^2\flat.
\end{align*}
Hence we have
\begin{align*}
\diamondsuit_k & = \frac{\flat^{\ell(0)}}{\mathfrak D(0)}+\sum_{\gamma^{(k)}=(\gamma_1^{(k-1)},\gamma_2^{(k-1)})\in(\spadesuit^{(k-1)})^2}\frac{\flat^{\ell(\gamma^{(k)})}}{\mathfrak D(\gamma^{(k)})} \\
& \leq 1+2^2\flat \\
& \leq 2.
\end{align*}
This shows that \eqref{4} is true for $k$. By induction, \eqref{4} holds for all $k\geq1$.

\end{enumerate}
\end{proof}

\begin{rema}\label{r.optimalchoices}
The value $\frac{1}{4}$ as the upper limit for the range of $\flat$ and the value $2$ as the upper bound for $\diamondsuit_k$ in part (4) of Lemma~\ref{l.Lemma2.4} cannot be improved. In fact, assuming that $\diamondsuit_k \leq M$ for some $M>1$ and $\flat$ from a suitable interval $(0,r]$, the induction argument yields $1+\flat M^2\leq M$, and hence $0<\flat\leq1/M-(1/M)^2$. Consider the following auxiliary  function defined by letting $f(x)=x-x^2$, where $0<x<1$. Clearly, $f$ takes its maximum at $x = 1/2$. This implies that $M=2$ and $0<\flat\leq1/4$ are optimal.
\end{rema}

\begin{proof}[Proof of Lemma \ref{expthm}]
If $0\leq t\leq{\mathcal L}_2$, then
\begin{eqnarray*}
|c_k(t,n)| &\stackrel{\eqref{ct}}{\leq}&\sum_{\gamma^{(k)}\in\spadesuit^{(k)}}\sum_{\substack{n^{(k)}\in\mathfrak N^{(k,\gamma^{(k)})}\\\mu(n^{(k)})}=n}|\mathfrak C^{(k,\gamma^{(k)})}(n^{(k)})||\mathfrak I^{(k,\gamma^{(k)})}(t,n^{(k)})||\mathfrak F^{(k,\gamma^{(k)})}(n^{(k)})|\\
&\stackrel{\eqref{c}-\eqref{f}}{\leq}&\mathcal A\sum_{\gamma^{(k)}\in\spadesuit^{(k)}}\frac{(2^{-1}\mathcal At)^{\ell(\gamma^{(k)})}}{\mathfrak D(\gamma^{(k)})}\sum_{\substack{n^{(k)}\in\mathfrak N^{(k,\gamma^{(k)})}\\\mu(n^{(k)})=n}}e^{-\rho|n^{(k)}|}\\
&\stackrel{}{\leq}&\mathcal A\sum_{\gamma^{(k)}\in\spadesuit^{(k)}}\frac{(2^{-1}\mathcal At)^{\ell(\gamma^{(k)})}}{\mathfrak D(\gamma^{(k)})}\sum_{\substack{n^{(k)}\in\mathfrak N^{(k,\gamma^{(k)})}}}e^{-\frac{\rho}{2}|n^{(k)}|}\cdot e^{-\frac{\rho}{2}|n|}\\
&\stackrel{\eqref{3}}{\leq}&\mathcal A\sum_{\gamma^{(k)}\in\spadesuit^{(k)}}\frac{(2^{-1}\mathcal At)^{\ell(\gamma^{(k)})}}{\mathfrak D(\gamma^{(k)})}\sum_{n^{(k)}\in(\mathbb Z^\nu)^{\sigma(\gamma^{(k)})}}e^{-\frac{\rho}{2}|n^{(k)}|}\cdot e^{-\frac{\rho}{2}|n|}\\
&\stackrel{}{=}&\mathcal A\sum_{\gamma^{(k)}\in\spadesuit^{(k)}}\frac{(2^{-1}\mathcal At)^{\ell(\gamma^{(k)})}}{\mathfrak D(\gamma^{(k)})}\prod_{j=1}^{\sigma(\gamma^{(k)})}\prod_{j^\prime=1}^\nu \sum_{n_{jj^\prime}\in\mathbb Z}e^{-\frac{\rho}{2}|n_{jj^\prime}|}\cdot e^{-\frac{\rho}{2}|n|}\\
&\stackrel{\eqref{2}}{\leq}&\mathcal A\sum_{\gamma^{(k)}\in\spadesuit^{(k)}}\frac{(2^{-1}\mathcal At)^{\ell(\gamma^{(k)})}}{\mathfrak D(\gamma^{(k)})}(6\rho^{-1})^{\sigma(\gamma^{(k)})\nu}\cdot e^{-\frac{\rho}{2}|n|}\\
&\stackrel{\eqref{1}}{=}&\mathcal A(6\rho^{-1})^\nu\sum_{\gamma^{(k)}\in\spadesuit^{(k)}}\frac{\left(2^{-1}\mathcal A(6\rho^{-1})^{\nu}t\right)^{\ell(\gamma^{(k)})}}{\mathfrak D(\gamma^{(k)})}\cdot e^{-\frac{\rho}{2}|n|}\\
&\stackrel{\eqref{4}}{\leq}&\mathcal Be^{-\frac{\rho}{2}|n|}, \quad\text{where $\mathcal B$ is given by \eqref{e.newconstant}, i.e., } \mathcal B=2\mathcal A(6\rho^{-1})^\nu.
\end{eqnarray*}
This completes the proof of Lemma \ref{expthm}.
\end{proof}

\subsection{Convergence of the Picard Sequence}

We are now in a position to show that the Picard sequence for the Fourier coefficients converges. The following lemma establishes a bound for the magnitude of $c_k(t,n)-c_{k-1}(t,n)$, from which the desired conclusion follows.

\begin{lemm}\label{cslem}
For $k\geq1$, $0 \leq t \leq{\mathcal L}_2$ with ${\mathcal L}_2$ from \eqref{e.locextime}, and $n \in \mathbb Z^\nu$, we have
\begin{align}
\label{cs} |c_k(t,n)-c_{k-1}(t,n)| & \leq \frac{2^{k-1}\mathcal B^{k+1}t^k}{4^k\cdot k!}\sum_{\substack{n_1,\cdots,n_{k+1}\in\mathbb Z^\nu\\n_1+\cdots+n_{k+1}=n}}\prod_{j=1}^{k+1}e^{-\frac{\rho}{2}|n_j|} \\
\label{cs2} & \leq \frac{\mathcal B(12\rho^{-1})^\nu}{2}\cdot\frac{\left(2^{-1}\mathcal B(12\rho^{-1})^\nu t\right)^k}{k!}\cdot e^{-\frac{\rho}{4}|n|}.
\end{align}
Hence, $\{c_k(t,n)\}$ is a Cauchy sequence.
\end{lemm}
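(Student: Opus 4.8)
The plan is to prove the pointwise estimate \eqref{cs} by induction on $k$, deduce \eqref{cs2} from it by a convolution estimate, and then read off the Cauchy property. Write $d_k(t,n):=c_k(t,n)-c_{k-1}(t,n)$. Subtracting the Picard identities \eqref{pi} for two consecutive indices and using the telescoping factorization
\[
c_{k-1}(n_1)c_{k-1}(n_2)-c_{k-2}(n_1)c_{k-2}(n_2)=c_{k-1}(n_1)\,d_{k-1}(n_2)+d_{k-1}(n_1)\,c_{k-2}(n_2)
\]
inside the discrete convolution yields, for $k\ge 2$, the recursion
\begin{align*}
d_k(t,n)=\frac{\lambda(n)}{2}\int_0^t e^{\lambda(n)(t-\tau)}\sum_{\substack{n_1,n_2\in\mathbb Z^\nu\\ n_1+n_2=n}}\big(c_{k-1}(\tau,n_1)d_{k-1}(\tau,n_2)+d_{k-1}(\tau,n_1)c_{k-2}(\tau,n_2)\big)\,{\rm d}\tau.
\end{align*}
Since $\lambda$ is purely imaginary with $|\lambda(n)|\le\tfrac12$ and $|e^{\lambda(n)(t-\tau)}|=1$, and since Lemma~\ref{expthm} provides the uniform bound $|c_{k-1}|,|c_{k-2}|\le\mathcal B e^{-\frac{\rho}{2}|\cdot|}$ on $[0,{\mathcal L}_2]$, each of the two symmetric terms is dominated by $\frac{\mathcal B}{4}\int_0^t\sum_{n_1+n_2=n}e^{-\frac{\rho}{2}|n_1|}\,|d_{k-1}(\tau,n_2)|\,{\rm d}\tau$.

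Next I would run the induction. For the base case $k=1$, the explicit formula for $c_1-c_0$ computed in the proof of Lemma~\ref{cthm}, together with $|c(n_j)|\le\mathcal A e^{-\rho|n_j|}\le\mathcal B e^{-\frac{\rho}{2}|n_j|}$ (using $\mathcal A\le\mathcal B$ and $\rho|n_j|\ge\tfrac{\rho}{2}|n_j|$), gives \eqref{cs} at $k=1$. For the inductive step, abbreviate the $m$-fold convolution $G_m(n):=\sum_{n_1+\cdots+n_m=n}\prod_{j=1}^m e^{-\frac{\rho}{2}|n_j|}$, so the right-hand side of \eqref{cs} at level $k$ is $\frac{2^{k-1}\mathcal B^{k+1}t^k}{4^k k!}G_{k+1}(n)$. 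Inserting the inductive hypothesis $|d_{k-1}(\tau,\cdot)|\le\frac{2^{k-2}\mathcal B^{k}\tau^{k-1}}{4^{k-1}(k-1)!}G_{k}(\cdot)$ and using the convolution identity $\sum_{n_1+n_2=n}e^{-\frac{\rho}{2}|n_1|}G_k(n_2)=G_{k+1}(n)$ absorbs the extra exponential factor into the convolution, raising $G_k$ to $G_{k+1}$; after the $\tau$-integration supplies the factor $t^k/k$, each symmetric term contributes $\frac{2^{k-2}\mathcal B^{k+1}t^k}{4^k k!}G_{k+1}(n)$, and their sum is exactly \eqref{cs}. The essential structural fact here is this \emph{convolution stabilization}: the recursion raises the number of convolution factors by precisely one at each step, matching the arithmetic of the prefactors; the only remaining care is the bookkeeping of the powers of $2$ and $4$ and the factorial.

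To pass from \eqref{cs} to \eqref{cs2} I would estimate $G_{k+1}(n)$. Splitting each factor as $e^{-\frac{\rho}{2}|n_j|}=e^{-\frac{\rho}{4}|n_j|}\cdot e^{-\frac{\rho}{4}|n_j|}$ and using $|n|\le\sum_j|n_j|$ to extract $\prod_j e^{-\frac{\rho}{4}|n_j|}\le e^{-\frac{\rho}{4}|n|}$, then discarding the constraint $n_1+\cdots+n_{k+1}=n$, gives
\begin{align*}
G_{k+1}(n)\le e^{-\frac{\rho}{4}|n|}\Big(\sum_{m\in\mathbb Z^\nu}e^{-\frac{\rho}{4}|m|}\Big)^{k+1}=e^{-\frac{\rho}{4}|n|}\Big(\sum_{l\in\mathbb Z}e^{-\frac{\rho}{4}|l|}\Big)^{\nu(k+1)}.
\end{align*}
Applying part~(2) of Lemma~\ref{l.Lemma2.4} with $\rho/4$ in place of $\rho$ (legitimate since $0<\rho/4\le1$) bounds the inner sum by $12\rho^{-1}$, whence $G_{k+1}(n)\le(12\rho^{-1})^{\nu(k+1)}e^{-\frac{\rho}{4}|n|}$. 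Substituting into \eqref{cs} and regrouping the constants via $2^{k-1}/4^k=2^{-1}\cdot 2^{-k}$ (which combines with $\mathcal B^{k+1}$ and $(12\rho^{-1})^{\nu(k+1)}$ to form $\frac{\mathcal B(12\rho^{-1})^\nu}{2}\cdot\frac{(2^{-1}\mathcal B(12\rho^{-1})^\nu t)^k}{k!}$) produces \eqref{cs2}. Finally, since the right-hand side of \eqref{cs2} is the general term of $\frac{\mathcal B(12\rho^{-1})^\nu}{2}e^{-\frac{\rho}{4}|n|}\exp\!\big(2^{-1}\mathcal B(12\rho^{-1})^\nu t\big)$, the series $\sum_k|d_k(t,n)|$ converges (uniformly in $n$), so the partial sums $c_k(t,n)=c_0(t,n)+\sum_{j=1}^k d_j(t,n)$ form a Cauchy sequence. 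I expect the genuine content to lie in setting up the telescoped recursion correctly and verifying the convolution stabilization; the exponent-splitting trick that pulls out $e^{-\frac{\rho}{4}|n|}$ while leaving a summable remainder is the key analytic device for \eqref{cs2}, and the tracking of constants, though error-prone, is otherwise routine.
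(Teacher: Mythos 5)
Your proposal is correct and follows essentially the same route as the paper's proof: the same telescoping factorization of the difference of products inside the convolution, the same induction using the uniform bound from Lemma~\ref{expthm} with the convolution order rising by one per step, the same $e^{-\frac{\rho}{2}|n_j|}=e^{-\frac{\rho}{4}|n_j|}e^{-\frac{\rho}{4}|n_j|}$ splitting with part~(2) of Lemma~\ref{l.Lemma2.4} applied at $\rho/4$ to obtain \eqref{cs2}, and the same comparison with the exponential series for the Cauchy property. Your constant bookkeeping ($2^{k-1}/4^k=2^{-1}\cdot2^{-k}$, the factor $t^k/k$ from the $\tau$-integration, and $\mathcal A\le\mathcal B$ in the base case) matches the paper's.
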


\begin{proof}
For $k=1$ we have
\begin{align*}
|c_1(t,n)-c_0(t,n)| & \leq \frac{|\lambda(n)|}{2}\int_0^t|e^{\lambda(n)(t-\tau)}|\sum_{\substack{n_1,n_2\in\mathbb Z^\nu\\n_1+n_2=n}}\prod_{j=1}^2|c_0(\tau,n_j)| \, {\rm d}\tau \\
& \leq \frac{\mathcal B^2t}{4}\sum_{\substack{n_1,n_2\in\mathbb Z^\nu\\n_1+n_2=n}}\prod_{j=1}^2e^{-\frac{\rho}{2}|n_j|}.
\end{align*}
This shows that \eqref{cs} holds for $k=1$.

Let $k\geq2$ and assume that \eqref{cs} is true for $1,\cdots,k-1$. For $k$, we first have
\begin{align*}
|c_{k}(t,n)-c_{k-1}(t,n)| & \leq \frac{|\lambda(n)|}{2}\int_0^t|e^{\lambda(n)(t-\tau)}|\sum_{\substack{n_1,n_2\in\mathbb Z^\nu\\n_1+n_2=n}} \Big| \prod_{j=1}^2c_{k-1}(\tau,n_j)-\prod_{j=1}^2c_{k-2}(\tau,n_j) \Big| \, {\rm d}\tau \\
& \leq \frac{1}{4}\int_0^t\sum_{\substack{n_1,n_2\in\mathbb Z^\nu\\n_1+n_2=n}}|c_{k-1}(\tau,n_1)||c_{k-1}(\tau,n_2)-c_{k-2}(\tau,n_2)| \, {\rm d}\tau\triangleq(\uppercase\expandafter{\romannumeral1}) \\
& \quad + \frac{1}{4}\int_0^t\sum_{\substack{n_1,n_2\in\mathbb Z^\nu\\n_1+n_2=n}}|c_{k-1}(\tau,n_1)-c_{k-2}(\tau,n_1)||c_{k-2}(\tau,n_2)| \, {\rm d}\tau\triangleq(\uppercase\expandafter{\romannumeral2}).
\end{align*}
For the first component, it follows from the induction hypothesis and Lemma~\ref{expthm} that
\begin{align*}
(\uppercase\expandafter{\romannumeral1})
& \leq \frac{1}{4}\int_0^t\sum_{\substack{n_1,n_2\in\mathbb Z^\nu\\n_1+n_2=n}}\mathcal Be^{-\frac{\rho}{2}|n_1|}\cdot\frac{2^{k-2}\mathcal B^{k}\tau^{k-1}}{4^{k-1}\cdot(k-1)!}\sum_{\substack{m_1,\cdots,m_k\in\mathbb Z^\nu\\m_1+\cdots+m_k=n_2}}\prod_{j=1}^{k}e^{-\frac{\rho}{2}|m_j|} \, {\rm d}\tau \\
& = \frac{2^{k-2}\mathcal B^{k+1}t^k}{4^k\cdot k!}\sum_{\substack{n_1,n_2\in\mathbb Z^\nu\\n_1+n_2=n}}\sum_{\substack{m_1,\cdots,m_k\in\mathbb Z^\nu\\m_1+\cdots+m_k=n_2}}e^{-\frac{\rho}{2}|n_1|}\prod_{j=1}^ke^{-\frac{\rho}{2}|m_j|} \\
& = \frac{2^{k-2}\mathcal B^{k+1}t^k}{4^k\cdot k!}\sum_{\substack{n_1,\cdots,n_{k+1}\in\mathbb Z^\nu\\n_1+\cdots+n_{k+1}=n}}\prod_{j=1}^{k+1}e^{-\frac{\rho}{2}|n_j|}.
\end{align*}
Analogously, for the second component we have
\[(\uppercase\expandafter{\romannumeral2})\leq\frac{2^{k-2}\mathcal B^{k+1}t^k}{4^k\cdot k!}\sum_{\substack{n_1,\cdots,n_{k+1}\in\mathbb Z^\nu\\n_1+\cdots+n_{k+1}=n}}\prod_{j=1}^{k+1}e^{-\frac{\rho}{2}|n_j|}.\]
Thus we find that
\begin{align*}
|c_k(t,n)-c_{k-1}(t,n)| & \leq (\uppercase\expandafter{\romannumeral1})+(\uppercase\expandafter{\romannumeral2}) \\
& \leq 2\times\frac{2^{k-2}\mathcal B^{k+1}t^k}{4^k\cdot k!}\sum_{\substack{n_1,\cdots,n_{k+1}\in\mathbb Z^\nu\\n_1+\cdots+n_{k+1}=n}}\prod_{j=1}^{k+1}e^{-\frac{\rho}{2}|n_j|} \\
& \leq \frac{2^{k-1}\mathcal B^{k+1}t^k}{4^k\cdot k!}\sum_{\substack{n_1,\cdots,n_{k+1}\in\mathbb Z^\nu\\n_1+\cdots+n_{k+1}=n}}\prod_{j=1}^{k+1}e^{-\frac{\rho}{2}|n_j|}.
\end{align*}
By induction, we see that \eqref{cs} holds for all $k\geq1$. Furthermore,
\begin{eqnarray*}
|c_k(t,n)-c_{k-1}(t,n)| &\leq&\frac{2^{k-1}\mathcal B^{k+1}t^k}{4^k\cdot k!}\sum_{\substack{n_1,\cdots,n_{k+1}\in\mathbb Z^\nu\\n_1+\cdots+n_{k+1}=n}}\prod_{j=1}^{k+1}e^{-\frac{\rho}{2}|n_j|}\\
&\leq&\frac{2^{k-1}\mathcal B^{k+1}t^k}{4^k\cdot k!}\sum_{n_1,\cdots,n_{k+1}\in\mathbb Z^\nu}\prod_{j=1}^{k+1}e^{-\frac{\rho}{4}|n_j|}\cdot e^{-\frac{\rho}{4}|n|}\\
&=&\frac{2^{k-1}\mathcal B^{k+1}t^k}{4^k\cdot k!}\prod_{j=1}^{k+1}\prod_{j^\prime=1}^\nu\sum_{n_{jj^\prime}\in\mathbb Z}e^{-\frac{\rho}{4}|n_{jj^\prime}|}\cdot e^{-\frac{\rho}{4}|n|}\\
&\stackrel{\eqref{2}}{\leq}&\frac{2^{k-1}\mathcal B^{k+1}t^k}{4^k\cdot k!}(12\rho^{-1})^{(k+1)\nu}\cdot e^{-\frac{\rho}{4}|n|}\\
&=&\frac{\mathcal B(12\rho^{-1})^\nu}{2}\cdot\frac{\left(2^{-1}\mathcal B(12\rho^{-1})^\nu t\right)^k}{k!}\cdot e^{-\frac{\rho}{4}|n|}.
\end{eqnarray*}
Hence $\{c_k(t,n)\}$ is a Cauchy sequence on $[0,{\mathcal L}_2]\times\mathbb Z^\nu$, completing the proof of Lemma~\ref{cslem}.
\end{proof}

\subsection{Proof of Theorem A: Existence}\label{ea}

In this subsection we prove the existence part of Theorem~A.

\begin{proof}
By Lemma \ref{cslem} we know that $\{c_k(t,n)\}$ is a Cauchy sequence and there exists a limit function, denoted by $c^\dag(t,n)$, where $0\leq t\leq{\mathcal L}_2$ and $n\in\mathbb Z^\nu$. By the triangle inequality we have
\begin{align*}
|c^\dag(t,n)|\leq|c^\dag(t,n)-c_k(t,n)|+|c_k(t,n)|, \quad\forall k\geq1.
\end{align*}
Since $k \ge 1$ is arbitrary in this estimate and the $c_k(t,n)$ obey the uniform upper bound \eqref{cktnunifupperbound}, it follows that the same upper bound holds for $c^\dag(t,n)$, that is,
\begin{align}\label{gf}
|c^\dag(t,n)|\leq\mathcal B e^{-\frac{\rho}{2}|n|}.
\end{align}

Naturally we regard the coefficients $c^\dag(t,n)$ as the Fourier coefficients of a candidate solution $u^\dag(t,x)$ of the Cauchy problem in question, and hence set
\begin{align*}
u^\dag(t,x)&:=\sum_{n\in\mathbb Z^\nu}c^\dag(t,n)e^{{\rm i}\langle n\rangle x};\\
(\partial_x^\#u^\dag)(t,x)&:=\sum_{n\in\mathbb Z^\nu}({\rm i\langle n\rangle})^{\#}c^\dag(t,n)e^{{\rm i}\langle n\rangle x}, \quad\#=1,2;\\
(u^\dag u^\dag_x)(t,x)&:=\sum_{n\in\mathbb Z^\nu}\frac{{\rm i}\langle n\rangle}{2}\sum_{\substack{n_1,n_2\in\mathbb Z^\nu\\n_1+n_2=n}}\prod_{j=1}^2c^{\dag}(t,n_j)e^{{\rm i}\langle n\rangle x};\\
(u^\dag_t)(t,x)&:=\sum_{n\in\mathbb Z^\nu}\left\{\lambda(n)c^\dag(t,n)+\frac{\lambda(n)}{2}\sum_{\substack{n_1,n_2\in\mathbb Z^\nu\\n_1+n_2=n}}\prod_{j=1}^2c^{\dag}(t,n_j)\right\}e^{{\rm i}\langle n\rangle x};\\
(u^\dag_{xxt})(t,x)&:=\sum_{n\in\mathbb Z^\nu}({\rm i}\langle n \rangle)^2\left\{\lambda(n)c^\dag(t,n)+\frac{\lambda(n)}{2}\sum_{\substack{n_1,n_2\in\mathbb Z^\nu\\n_1+n_2=n}}\prod_{j=1}^2c^{\dag}(t,n_j)\right\}e^{{\rm i}\langle n\rangle x}.
\end{align*}
We claim that $u^\dag$ is a classical spatially quasi-periodic solution to quasi-periodic Cauchy problem \eqref{sl}--\eqref{dt}, that is, $u^\dag_t, u^\dag_{xxt}, u^\dag_x, u^\dag u^\dag_x$ satisfy BBM \eqref{sl} in the classical sense and $u^\dag$ has initial data \eqref{dt}. On the one hand, by the exponential decay of $c^\dag(t,n)$, one can see that $u^\dag, u^\dag_t, u^\dag_{xxt}, u^\dag_x, u^\dag u^\dag_x$ are uniformly and absolutely convergent. It is sufficient to verify that
\begin{align*}
\sum_{n\in\mathbb Z^\nu}(1+|n|+|n|^2)\left\{c^\dag(t,n)+\sum_{\substack{n_1,n_2\in\mathbb Z^\nu\\n_1+n_2=n}}\prod_{j=1}^2|c^{\dag}(t,n_j)|\right\}<\infty.
\end{align*}
In fact, for $\#=0,1,2$, we have
\begin{eqnarray*}
\sum_{n\in\mathbb Z^\nu}|n|^{\#}|c^\dag(t,n)| &\stackrel{\eqref{gf}}{\lesssim}&\sum_{n\in\mathbb Z^\nu}|n|^{\#}e^{-\frac{\rho}{2}|n|}\\
&=&\sum_{n\in\mathbb Z^\nu}\underbrace{|n|^{\#}e^{-\frac{\rho}{4}|n|}}_{\text{bounded}}e^{-\frac{\rho}{4}|n|}\\
&\lesssim&\sum_{n\in\mathbb Z^\nu}e^{-\frac{\rho}{4}|n|}\\
&\stackrel{\eqref{2}}{\leq}&(12\rho^{-1})^{\nu}\\
&<&\infty
\end{eqnarray*}
and
\begin{eqnarray*}
\sum_{n\in\mathbb Z^\nu}|n|^{\#}\sum_{\substack{n_1,n_2\in\mathbb Z^\nu\\n_1+n_2=n}}\prod_{j=1}^2|c^{\dag}(t,n_j)| &\stackrel{\eqref{gf}}{\lesssim}&\sum_{n\in\mathbb Z^\nu}|n|^{\#}\sum_{\substack{n_1,n_2\in\mathbb Z^\nu\\n_1+n_2=n}}\prod_{j=1}^2e^{-\frac{\rho}{2}|n_j|}\\
&\leq&\sum_{n\in\mathbb Z^\nu}\underbrace{|n|^{\#}e^{-\frac{\rho}{8}|n|}}_{\text{bounded}}\underbrace{\sum_{n_1,n_2\in\mathbb Z^\nu}\prod_{j=1}^2e^{-\frac{\rho}{4}|n_j|}}_{\text{bounded by ~\eqref{2}}}e^{-\frac{\rho}{8}|n|}\\
&\lesssim&\sum_{n\in\mathbb Z^\nu}e^{-\frac{\rho}{8}|n|}\\
&\stackrel{\eqref{2}}{\leq}&(24\rho^{-1})^{\nu}\\
&<&\infty.
\end{eqnarray*}
On the another hand, since $c^\dag$ is the limit function of the Picard sequence $\{c_k\}$ defined by \eqref{pi}, it satisfies \eqref{ie}. Hence $c^\dag$ is a solution to \eqref{ode} and satisfies the initial condition $c^\dag(0,n)=c(n)$. This implies that $u^\dag$ is a classical spatially quasi-periodic solution to the Cauchy problem \eqref{sl}--\eqref{dt}. Hence the existence part of Theorem A is proved.
\end{proof}

\subsection{Proof of Theorem A: Uniqueness}\label{ua}

In this subsection we prove the uniqueness part of Theorem A.

\begin{proof}
Let
\[v(t,x)=\sum_{n\in\mathbb Z^\nu}\hat{v}(t,n)e^{{\rm i}\langle n\rangle x}\quad\text{and}\quad w(t,x)=\sum_{n\in\mathbb Z^\nu}\hat{w}(t,n)e^{{\rm i}\langle n\rangle x}\]
be two quasi-periodic solutions to \eqref{sl}--\eqref{dt}, where $\hat{v}$ and $\hat{w}$ satisfy the following conditions:
\begin{itemize}
  \item (same initial data)
  \[\hat{v}(0,n)=\hat{w}(0,n),\quad\forall n\in\mathbb Z^\nu;\]
  \item (integral equation)
  \begin{align*}
  \hat{v}(t,n)&=e^{\lambda(n)t}\hat{v}(0,n)+\frac{\lambda(n)}{2}\int_{0}^te^{\lambda(n)(t-\tau)}\sum_{\substack{n_1,n_2\in\mathbb Z^\nu\\n_1+n_2=n}}\prod_{j=1}^2\hat{v}(\tau,n_j){\rm d}\tau,\\
  \hat{w}(t,n)&=e^{\lambda(n)t}\hat{w}(0,n)+\frac{\lambda(n)}{2}\int_{0}^te^{\lambda(n)(t-\tau)}\sum_{\substack{n_1,n_2\in\mathbb Z^\nu\\n_1+n_2=n}}\prod_{j=1}^2\hat{w}(\tau,n_j){\rm d}\tau;
  \end{align*}
  \item (exponential decay)
  \[|\hat{v}(t,n)|\leq\mathcal Be^{-\frac{\rho}{2}|n|}\quad\text{and}\quad|\hat{w}(t,n)|\leq\mathcal B e^{-\frac{\rho}{2}|n|},\qquad0\leq t\leq\mathcal L_2, n\in\mathbb Z^\nu.\]
\end{itemize}
For all $k\geq1$, one can derive that
\begin{align}\label{u}
|\hat v(t,n)-\hat w(t,n)|\leq\frac{2^k\mathcal B^{k+1}t^k}{4^k\cdot k!}\sum_{\substack{n_1,\cdots,n_{k+1}\in\mathbb Z^\nu\\n_1+\cdots+n_{k+1}=n}}\prod_{j=1}^{k+1}e^{-\frac{\rho}{2}|n_j|}.
\end{align}
In fact,  we first have
\begin{align*}
|\hat v(t,n)-\hat w(t,n)| & \leq \frac{1}{4}\int_0^t\sum_{\substack{n_1,n_2\in\mathbb Z^\nu\\n_1+n_2=n}}|\prod_{j=1}^2\hat{v}(\tau,n_j)-\prod_{j=1}^2\hat w(\tau,n_j)| \, {\rm d}\tau \\
& \leq \frac{\mathcal B^2t}{2}\sum_{\substack{n_1,n_2\in\mathbb Z^\nu\\n_1+n_2=n}}\prod_{j=1}^2e^{-\frac{\rho}{2}|n_j|}.
\end{align*}
Hence \eqref{u} holds for $k=1$. Let $k\geq2$ and assume that it holds for $1,\cdots,k-1$. For $k$, we have
\begin{align*}
|\hat v(t,n)-\hat w(t,n)| & \leq \frac{1}{4}\int_0^t\sum_{\substack{n_1,n_2\in\mathbb Z^\nu\\n_1+n_2=n}} \Big| \prod_{j=1}^2\hat v(\tau,n_j)-\prod_{j=1}^2\hat w(\tau,n_j) \Big| \, {\rm d}\tau \\
& \leq \frac{1}{4}\int_0^t\sum_{\substack{n_1,n_2\in\mathbb Z^\nu\\n_1+n_2=n}}|\hat v(\tau,n_1)-\hat w(\tau,n_1)||\hat{v}(\tau,n_2)| \, {\rm d}\tau \triangleq(\uppercase\expandafter{\romannumeral1}^\prime) \\
& \quad + \frac{1}{4}\int_0^t\sum_{\substack{n_1,n_2\in\mathbb Z^\nu\\n_1+n_2=n}}|\hat{w}(\tau,n_1)||\hat v(\tau,n_2)-\hat w(\tau,n_2)| \, {\rm d}\tau\triangleq(\uppercase\expandafter{\romannumeral2}^\prime).
\end{align*}
For the first component we have
\begin{align*}
(\uppercase\expandafter{\romannumeral1}^\prime) & \leq \frac{1}{4}\int_0^t\sum_{\substack{n_1,n_2\in\mathbb Z^\nu\\n_1+n_2=n}}\frac{2^{k-1}\mathcal B^{k}\tau^{k-1}}{4^{k-1}\cdot (k-1)!}\sum_{\substack{m_1,\cdots,m_{k}\in\mathbb Z^\nu\\m_1+\cdots+m_{k}=n_1}}\prod_{j=1}^{k}e^{-\frac{\rho}{2}|m_j|}\cdot\mathcal Be^{-\frac{\rho}{2}|n_2|}{\rm d}\tau \\
& = \frac{2^{k-1}\mathcal B^{k+1}t^k}{4^k\cdot k!}\sum_{\substack{n_1,n_2\in\mathbb Z^\nu\\n_1+n_2=n}}\sum_{\substack{m_1,\cdots,m_{k}\in\mathbb Z^\nu\\m_1+\cdots+m_{k}=n_1}}e^{-\frac{\rho}{2}|n_2|}\prod_{j=1}^{k}e^{-\frac{\rho}{2}|m_j|} \\
& = \frac{2^{k-1}\mathcal B^{k+1}t^k}{4^k\cdot k!}\sum_{\substack{n_1,\cdots,n_{k+1}\in\mathbb Z^\nu\\n_1+\cdots+n_{k+1}=n}}\prod_{j=1}^{k+1}e^{-\frac{\rho}{2}|n_j|}.
\end{align*}
After a similar argument we find
\[(\uppercase\expandafter{\romannumeral2}^\prime)\leq\frac{2^{k-1}\mathcal B^{k+1}t^k}{4^k\cdot k!}\sum_{\substack{n_1,\cdots,n_{k+1}\in\mathbb Z^\nu\\n_1+\cdots+n_{k+1}=n}}\prod_{j=1}^{k+1}e^{-\frac{\rho}{2}|n_j|}.\]
Hence we have
\begin{align*}
|\hat v(t,n)-\hat w(t,n)| & \leq (\uppercase\expandafter{\romannumeral1}^\prime)+(\uppercase\expandafter{\romannumeral2}^\prime) \\
& \leq \frac{2^{k}\mathcal B^{k+1}t^k}{4^k\cdot k!}\sum_{\substack{n_1,\cdots,n_{k+1}\in\mathbb Z^\nu\\n_1+\cdots+n_{k+1}=n}}\prod_{j=1}^{k+1}e^{-\frac{\rho}{2}|n_j|}.
\end{align*}
By induction, \eqref{u} holds for all $k\geq1$. By \eqref{2} one can derive that
\begin{align*}
|\hat v(t,n)-\hat w(t,n)| & \leq \frac{2^{k}\mathcal B^{k+1}t^k}{4^k\cdot k!}\sum_{\substack{n_1,\cdots,n_{k+1}\in\mathbb Z^\nu\\n_1+\cdots+n_{k+1}=n}}\prod_{j=1}^{k+1}e^{-\frac{\rho}{2}|n_j|} \\
& = \frac{2^{k}\mathcal B^{k+1}t^k}{4^k\cdot k!}\sum_{n_1,\cdots,n_{k+1}\in\mathbb Z^\nu}\prod_{j=1}^{k+1}e^{-\frac{\rho}{4}|n_j|}\cdot e^{-\frac{\rho}{4}|n|} \\
& = \frac{2^{k}\mathcal B^{k+1}t^k}{4^k\cdot k!}\prod_{j=1}^{k+1}\prod_{j^\prime=1}^\nu\sum_{n_{jj^\prime}\in\mathbb Z^\nu}e^{-\frac{\rho}{4}|n_{jj^\prime}|}\cdot e^{-\frac{\rho}{4}|n|} \\
& \leq \frac{2^{k}\mathcal B^{k+1}t^k}{4^k\cdot k!}(12\rho^{-1})^{(k+1)\nu}\cdot e^{-\frac{\rho}{4}|n|} \\
& \leq \mathcal B(12\rho^{-1})^\nu\cdot\frac{(2^{-1}\mathcal B(12\rho^{-1})^\nu t)^k}{k!}\cdot e^{-\frac{\rho}{4}|n|}.
\end{align*}
As these estimates hold for arbitrary $k \ge 1$, we can send $k \to \infty$ in the upper bound for $|\hat v(t,n)-\hat w(t,n)|$, and since the limit vanishes, we have
\[\hat v(t,n)\equiv\hat w(t,n),\quad 0\leq t\leq\mathcal L_2, n\in\mathbb Z^\nu.\]
This implies that the spatially quasi-periodic solution to the quasi-periodic Cauchy problem \eqref{sl}--\eqref{dt} is unique. Hence the uniqueness component of Theorem A is proved.
\end{proof}

\section{The General Case: gBBM}

In this section we study the general case, that is, $p\geq2$. In particular we address the remaining cases $p\geq3$. The key point is to propose new indices for the term $u^{p-1}u_x$ and their relations (compared with \cite{DG16JAMS}). Proofs that are similar to the ones in Section~\ref{bm} are omitted; we only give those proofs that need significant new arguments.

To avoid confusion of symbols, set $\hat u(n)=\mathfrak c(n)$ and $\hat u(t,n)=\mathfrak c(t,n)$. The counterparts of $\mathfrak N$, $\mathfrak C$, $\mathfrak I$, $\mathfrak F$, $\sigma$, $\ell$ and $\mathfrak D$ will be denoted by $\mathscr N$, $\mathscr C$, $\mathscr I$, $\mathscr F$, $\alpha$, $\beta$ and $\mathscr D$, respectively. Their definitions will be introduced below.

In the Fourier space, the quasi-periodic Cauchy problem \eqref{sl}--\eqref{dt} is again reduced to a nonlinear infinite system of coupled ODEs,
\begin{align}\label{o3}
(\partial_t\mathfrak c)(t,n)-\lambda(n)\mathfrak c(t,n)=\frac{\lambda(n)}{p}\sum_{\substack{n_1,\cdots,n_p\in\mathbb Z^\nu\\n_1+\cdots+n_p=n}}\prod_{j=1}^p\mathfrak c(t,n_j),\quad \forall n\in\mathbb Z^\nu
\end{align}
with initial data
\begin{align}\label{i3}
\mathfrak c(0,n)=\mathfrak c(n),\quad \forall n\in\mathbb Z^\nu.
\end{align}

Obviously $\mathfrak c(t,0)\equiv\mathfrak c(0)$. According to the variation of constants formula, the Cauchy problem \eqref{o3}--\eqref{i3} is equivalent to the following integral equation,
\begin{align*}
\mathfrak c(t,n)=e^{{\lambda(n)t}}\mathfrak c(n)+\frac{\lambda(n)}{p}\int_0^te^{{\lambda(n)}(t-\tau)}\sum_{\substack{n_1,\cdots,n_p\in\mathbb Z^\nu\\n_1+\cdots+n_p=n}}\prod_{j=1}^p\mathfrak c(\tau,n_j) \, {\rm d}\tau,\quad \forall n\in\mathbb Z^\nu\backslash\{0\}.
\end{align*}

Define the Picard sequence $\{\mathfrak c_k(t,n)\}$ to approximate $\mathfrak c(t,n)$ by letting
\begin{align*}
\mathfrak c_{k}(t,n):=
\begin{cases}
e^{{\lambda(n)t}}\mathfrak c(n), &k=0;\\
\mathfrak c_0(t,n)+\frac{\lambda(n)}{p}\int_0^te^{{\lambda(n)}(t-\tau)}\sum_{\substack{n_1,\cdots,n_p\in\mathbb Z^\nu\\n_1+\cdots+n_p=n}}\prod_{j=1}^p\mathfrak c_{k-1}(\tau,n_j){\rm d}\tau,&k\geq1.
\end{cases}
\end{align*}

Next we will use the combinatorial tree form of the Picard sequence to prove its exponential decay property and then to prove that it is a Cauchy sequence.

Set
\begin{align*}
\P^{(k)}&:=
\begin{cases}
\{0,1\},&k=1;\\
\{0\}\cup(\P^{(k-1)})^{p},&k\geq2.
\end{cases}\\
\mathscr N^{(k,\gamma^{(k)})}&:=
\begin{cases}
\mathbb Z^\nu, &0=\gamma^{(k)}\in\P^{(k)}, k\geq1;\\
(\mathbb Z^\nu)^p, &1=\gamma^{(1)}; \\
\prod_{j=1}^p\mathscr N^{(k-1,\gamma_j^{(k-1)})},&(\gamma_j^{(k-1)})_{1\leq j\leq p}=\gamma^{(k)}\in(\P^{(k-1)})^p, k\geq2.
\end{cases}\\
\mathscr C^{(k,\gamma^{(k)})}(n^{(k)})&:=
\begin{cases}
\mathfrak c(n), &0=\gamma^{(k)}\in\P^{(k)}, n=n^{(k)}\in\mathscr N^{(k,0)},k\geq1;\\
\prod_{j=1}^p\mathfrak c(n_j), &1=\gamma^{(1)}\in\P^{(1)}, (n_j)_{1\leq j\leq p}=n^{(1)}\in\mathscr N^{(1,1)}; \\
\prod_{j=1}^p\mathscr C^{(k-1,\gamma_j^{(k-1)})}(n_j^{(k-1)}),&(\gamma_j^{(k-1)})_{1\leq j\leq p}=\gamma^{(k)}\in(\P^{(k-1)})^p,\\
 &(n_j^{(k-1)})_{1\leq j\leq p}=n^{(k)}\in\prod_{j=1}^p\mathscr N^{(k-1,\gamma_j^{(k-1)})},\\
 &k\geq2.
\end{cases}\\
\mathscr I^{(k,\gamma^{(k)})}(t,n^{(k)})&:=
\begin{cases}
e^{\lambda(n)t}, &0=\gamma^{(k)}\in\P^{(k)}, \\
&n=n^{(k)}\in\mathscr N^{(k,0)},\\
&k\geq1;\\
\int_0^te^{\lambda(\mu(n^{(1)})(t-\tau))}\prod_{j=1}^pe^{\lambda(n_j)\tau}{\rm d}\tau, &1=\gamma^{(1)}\in\P^{(1)}, \\
&(n_j)_{1\leq j\leq p}=n^{(1)}\\
&\in\mathscr N^{(1,1)};\\
\int_0^te^{\lambda(\mu(n^{(k)}))(t-\tau)}\prod_{j=1}^p\mathscr I^{(k-1,\gamma_j^{(k-1)})}(\tau,n_j^{(k-1)}){\rm d}\tau,&(\gamma_j^{(k-1)})_{1\leq j\leq p}=\gamma^{(k)}\\
&\in(\P^{(k-1)})^p,\\
 &(n_j^{(k-1)})_{1\leq j\leq p}=n^{(k)}\\
 &\in\prod_{j=1}^p\mathscr N^{(k-1,\gamma_j^{(k-1)})},\\
 &k\geq2.
\end{cases}\\
\mathscr F^{(k,\gamma^{(k)})}(n^{(k)})&:=
\begin{cases}
1, &0=\gamma^{(k)}\in\P^{(k)}, \\
&n=n^{(k)}\in\mathscr N^{(k,0)},\\
&k\geq1;\\
\frac{\lambda(\mu(n^{(1)}))}{p}, &1=\gamma^{(1)}\in\P^{(1)}, \\
&(n_j)_{1\leq j\leq p}=n^{(1)}\in\mathscr N^{(1,1)};\\
\frac{\lambda(\mu(n^{(k)}))}{p}\prod_{j=1}^p\mathscr F^{(k-1,\gamma_j^{(k-1)})}(n_j^{(k-1)}),&(\gamma_j^{(k-1)})_{1\leq j\leq p}=\gamma^{(k)}\in(\P^{(k-1)})^p,\\
 &(n_j^{(k-1)})_{1\leq j\leq p}=n^{(k)}\\
 &\in\prod_{j=1}^p\mathscr N^{(k-1,\gamma_j^{(k-1)})},k\geq2.
\end{cases}
\end{align*}

With the help of these abstract symbols, we have the following:

\begin{lemm}
The Picard sequence $\{\mathfrak c_k(t,n)\}$ can be rewritten as a combinatorial tree, that is,
\begin{align*}
\mathfrak c_k(t,n)=\sum_{\gamma^{(k)}\in\P^{(k)}}\sum_{\substack{n^{(k)}\in\mathscr N^{(k,\gamma^{(k)})}\\\mu(n^{(k)})=n}}\mathscr C^{(k,\gamma^{(k)})}(n^{(k)})\mathscr I^{(k,\gamma^{(k)})}(t,n^{(k)})\mathscr F^{(k,\gamma^{(k)})}(n^{(k)}),\quad\forall k\geq1.
\end{align*}
\end{lemm}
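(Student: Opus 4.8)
The plan is to argue by induction on $k$, following the strategy of the proof of Lemma~\ref{cthm}, with the square $\prod_{j=1}^2$ and the convolution constraint $n_1+n_2=n$ replaced throughout by the $p$-fold product $\prod_{j=1}^p$ and the constraint $n_1+\cdots+n_p=n$. First I would record, straight from the definitions of $\mathscr C^{(k,0)}$, $\mathscr I^{(k,0)}$, $\mathscr F^{(k,0)}$, that the summand indexed by $\gamma^{(k)}=0$ on the right-hand side reproduces exactly $\mathfrak c_0(t,n)=e^{\lambda(n)t}\mathfrak c(n)$, for every $k\geq1$.

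For the base case $k=1$, I would compute $\mathfrak c_1(t,n)-\mathfrak c_0(t,n)$ from the Picard iteration. Inserting $\mathfrak c_0(\tau,n_j)=e^{\lambda(n_j)\tau}\mathfrak c(n_j)$ and pulling the constants $\mathfrak c(n_j)$ out of the integral, the difference becomes
\[
\sum_{\substack{n_1,\cdots,n_p\in\mathbb Z^\nu\\n_1+\cdots+n_p=n}}\prod_{j=1}^p\mathfrak c(n_j)\cdot\frac{\lambda(n)}{p}\int_0^te^{\lambda(n)(t-\tau)}\prod_{j=1}^pe^{\lambda(n_j)\tau}\,{\rm d}\tau,
\]
which is precisely the $\gamma^{(1)}=1$ summand once one notes $\lambda(n)=\lambda(\mu(n^{(1)}))$ (since $\mu(n^{(1)})=n_1+\cdots+n_p=n$) and compares with the definitions of $\mathscr C^{(1,1)}$, $\mathscr I^{(1,1)}$, $\mathscr F^{(1,1)}$. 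Adding back the $\gamma^{(1)}=0$ contribution gives the full sum over $\P^{(1)}=\{0,1\}$.

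For the inductive step, assuming the claim for $1,\dots,k-1$, I would substitute the induction hypothesis for each factor $\mathfrak c_{k-1}(\tau,n_j)$ in $\mathfrak c_k(t,n)-\mathfrak c_0(t,n)$, then interchange the $p$-fold product with the $p$ independent index-sums. This produces a single sum over $\gamma^{(k)}=(\gamma_j^{(k-1)})_{1\leq j\leq p}\in(\P^{(k-1)})^p$ and over $n^{(k)}=(n_j^{(k-1)})_{1\leq j\leq p}\in\mathscr N^{(k,\gamma^{(k)})}=\prod_{j=1}^p\mathscr N^{(k-1,\gamma_j^{(k-1)})}$, in which the product of the $\mathscr C$-factors collapses to $\mathscr C^{(k,\gamma^{(k)})}(n^{(k)})$, the prefactor $\frac{\lambda(n)}{p}\prod_{j=1}^p\mathscr F^{(k-1,\gamma_j^{(k-1)})}$ collapses to $\mathscr F^{(k,\gamma^{(k)})}(n^{(k)})$, and the kernel integral $\int_0^te^{\lambda(n)(t-\tau)}\prod_{j=1}^p\mathscr I^{(k-1,\gamma_j^{(k-1)})}(\tau,n_j^{(k-1)})\,{\rm d}\tau$ collapses to $\mathscr I^{(k,\gamma^{(k)})}(t,n^{(k)})$. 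Adding the $\gamma^{(k)}=0$ term and invoking $\P^{(k)}=\{0\}\cup(\P^{(k-1)})^p$ then closes the induction.

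The only genuinely new point relative to the $p=2$ case — and the step I expect to require the most care — is the amalgamation of the constraints: the outer convolution constraint $n_1+\cdots+n_p=n$ together with the $p$ inner constraints $\mu(n_j^{(k-1)})=n_j$ must collapse into the single constraint $\mu(n^{(k)})=n$ appearing in the target formula. This follows from the additivity $\mu(n^{(k)})=\sum_{j=1}^p\mu(n_j^{(k-1)})=\sum_{j=1}^pn_j=n$. Once this is verified, together with the legitimacy of factoring $\prod_{j=1}^p$ through the now-independent block sums (each factor depending only on its own block $n_j^{(k-1)}$), the identification of $\mathscr C$, $\mathscr I$, $\mathscr F$ at level $k$ with their recursive definitions is a routine unwinding of notation that transcribes directly from the argument for $p=2$.
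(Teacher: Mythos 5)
Your proposal is correct and coincides with the paper's approach: the paper proves the $p=2$ case (Lemma~\ref{cthm}) by exactly this induction and explicitly omits the general-$p$ proof as a routine adaptation, which is what you carry out. Your added verification of the constraint amalgamation via the additivity $\mu(n^{(k)})=\sum_{j=1}^p\mu(n_j^{(k-1)})=n$ is precisely the point implicitly used in the paper's $p=2$ argument, so nothing is missing.
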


By induction, we can prove the following estimates for $\mathscr C,\mathscr I$ and $\mathscr F$.
\begin{lemm}
For all $k\geq1$,
\begin{align*}
|\mathscr C^{(k,\gamma^{(k)})}(n^{(k)})|&\leq\mathcal A^{\alpha(\gamma^{(k)})}e^{-\rho|n^{(k)}|},\\
|\mathscr I^{(k,\gamma^{(k)})}(t,n^{(k)})|&\leq\frac{t^{\beta(\gamma^{(k)})}}{\mathscr D(\gamma^{(k)})},\\
|\mathscr F^{(k,\gamma^{(k)})}(n^{(k)})|&\leq\frac{1}{p^{\beta(\gamma^{(k)})}}<1,
\end{align*}
where
\begin{align*}
\alpha(\gamma^{(k)})&:=
\begin{cases}
\frac{1}{p-1},&0=\gamma^{(k)}\in\P^{(k)}, k\geq1;\\
\frac{p}{p-1},&1=\gamma^{(1)}\in\P^{(1)};\\
\sum_{j=1}^p\alpha(\gamma_j^{(k-1)}), &(\gamma_j^{(k-1)})_{1\leq j\leq p}=\gamma^{(k)}\in(\P^{(k-1)})^{p},k\geq2,
\end{cases}\\
\beta(\gamma^{(k)})&:=
\begin{cases}
0,&0=\gamma^{(k)}\in\P^{(k)}, k\geq1;\\
1,&1=\gamma^{(1)}\in\P^{(1)};\\
1+\sum_{j=1}^p\beta(\gamma_j^{(k-1)}), &(\gamma_j^{(k-1)})_{1\leq j\leq p}=\gamma^{(k)}\in(\P^{(k-1)})^{p},k\geq2.
\end{cases}
\end{align*}
\end{lemm}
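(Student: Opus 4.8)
The plan is to prove all three estimates simultaneously by induction on $k$, following verbatim the structure of the proof of Lemma~\ref{ind}, with the dyadic products of the case $p=2$ replaced throughout by $p$-fold products and the indices $\sigma,\ell,\mathfrak D$ replaced by $\alpha,\beta,\mathscr D$. The only inputs are three elementary facts already available: the initial decay hypothesis \eqref{exd}, which reads $|\mathfrak c(n)|\leq\mathcal A^{1/(p-1)}e^{-\rho|n|}$; the purely imaginary nature of $\lambda(n)$, which gives $|e^{\lambda(n)t}|=1$ for real $t$; and the uniform bound $|\lambda(n)|\leq\frac12\leq1$. Since its definition is not needed elsewhere but enters the $\mathscr I$ bound, I would first record that $\mathscr D$ is defined in exact analogy with $\mathfrak D$, namely $\mathscr D(0)=\mathscr D(1)=1$ and $\mathscr D(\gamma^{(k)})=\beta(\gamma^{(k)})\prod_{j=1}^p\mathscr D(\gamma_j^{(k-1)})$ for $k\geq2$.

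For the base case $k=1$ the instance $\gamma^{(1)}=0$ is immediate: $\mathscr C^{(1,0)}=\mathfrak c(n)$ is bounded by \eqref{exd} with exponent $\alpha(0)=1/(p-1)$; $|\mathscr I^{(1,0)}|=|e^{\lambda(n)t}|=1=t^{0}/\mathscr D(0)$ since $\beta(0)=0$; and $|\mathscr F^{(1,0)}|=1=p^{-\beta(0)}$. For $\gamma^{(1)}=1$ the product $\mathscr C^{(1,1)}=\prod_{j=1}^p\mathfrak c(n_j)$ yields $\mathcal A^{p/(p-1)}e^{-\rho|n^{(1)}|}$, matching $\alpha(1)=p/(p-1)$; pulling absolute values inside the integral and using $|e^{\lambda(\cdot)t}|=1$ gives $|\mathscr I^{(1,1)}|\leq\int_0^t\mathrm d\tau=t=t^{\beta(1)}/\mathscr D(1)$; and $|\mathscr F^{(1,1)}|=|\lambda(\mu(n^{(1)}))|/p\leq1/p=p^{-\beta(1)}$ using $|\lambda|\leq1$.

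For the inductive step ($k\geq2$, $\gamma^{(k)}=(\gamma_j^{(k-1)})_{1\leq j\leq p}$) each quantity is a product, or an integral of a product, of level-$(k-1)$ quantities, so the induction hypothesis applies to all $p$ factors. For $\mathscr C$, multiplying the $p$ bounds turns the additive recursion $\alpha(\gamma^{(k)})=\sum_j\alpha(\gamma_j^{(k-1)})$ into the correct power of $\mathcal A$ and, via $|n^{(k)}|=\sum_j|n_j^{(k-1)}|$, the correct exponential weight. For $\mathscr F$, the node factor $|\lambda(\mu(n^{(k)}))|/p\leq1/p$ combines with $\prod_j p^{-\beta(\gamma_j^{(k-1)})}$ to give $p^{-(1+\sum_j\beta(\gamma_j^{(k-1)}))}=p^{-\beta(\gamma^{(k)})}$ (strictly below $1$ whenever $\beta\geq1$, i.e.\ for every $\gamma\neq0$). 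The only step needing genuine care is $\mathscr I$: after moving absolute values inside, one is left with $\int_0^t\prod_j\tau^{\beta(\gamma_j^{(k-1)})}/\mathscr D(\gamma_j^{(k-1)})\,\mathrm d\tau$, and the identity $\int_0^t\tau^{\sum_j\beta_j}\,\mathrm d\tau=t^{1+\sum_j\beta_j}/(1+\sum_j\beta_j)$ produces exactly the factor $(1+\sum_j\beta_j)^{-1}=\beta(\gamma^{(k)})^{-1}$ which, together with $\prod_j\mathscr D(\gamma_j^{(k-1)})^{-1}$, reconstitutes $\mathscr D(\gamma^{(k)})^{-1}$.

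I do not anticipate a serious obstacle, since the argument is structurally identical to Lemma~\ref{ind}. The one point that must be verified carefully, and which is the substantive difference from the case $p=2$ treated in \cite{DG16JAMS}, is that the newly introduced indices are \emph{engineered} so that the recursions close: the additive recursion for $\alpha$ must count the leaf factors $\mathfrak c$ (each contributing $\mathcal A^{1/(p-1)}$), which is precisely why $\alpha(0)=1/(p-1)$ is chosen to match the exponent in \eqref{exd}, while the recursion $\beta(\gamma^{(k)})=1+\sum_j\beta(\gamma_j^{(k-1)})$ and the definition of $\mathscr D$ must be exactly compatible with the single time-integration at each node. Checking that $\mathscr D$ is forced by the integration identity $\int_0^t\tau^m\,\mathrm d\tau=t^{m+1}/(m+1)$ is the crux, but it is bookkeeping rather than any new idea.
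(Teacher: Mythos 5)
Your proposal is correct and is essentially the paper's own argument: the paper omits this proof, stating only that it follows ``by induction'' in the same way as Lemma~\ref{ind}, and your induction on $k$ with $p$-fold products, the recursion $\mathscr D(\gamma^{(k)})=\beta(\gamma^{(k)})\prod_{j=1}^p\mathscr D(\gamma_j^{(k-1)})$, and the integration identity $\int_0^t\tau^m\,{\rm d}\tau=t^{m+1}/(m+1)$ is exactly the intended adaptation. You also correctly supply the definition of $\mathscr D$ (which the paper leaves implicit) and rightly note that the strict inequality $|\mathscr F|<1$ only holds for $\gamma^{(k)}\neq0$, since $\beta(0)=0$ gives $|\mathscr F^{(k,0)}|=1$ --- a harmless slip in the paper's statement that does not affect its use.
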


The following lemma contains some observations we will need below:

\begin{lemm}\label{smf}
For all $k\geq1$,
\begin{enumerate}
\item
  $
  \dim_{\mathbb Z^\nu}\mathscr N^{(k,\gamma^{(k)})}=(p-1)\alpha(\gamma^{(k)});
 $
  \item
  $
  \alpha(\gamma^{(k)})=\beta(\gamma^{(k)})+\frac{1}{p-1};
  $
  \item If $0\leq\natural\leq\frac{(p-1)^{p-1}}{p^p}$, then
  $$\blacklozenge_k\triangleq\sum_{\gamma^{(k)}\in\P^{(k)}}\frac{\natural^{\beta(\gamma^{(k)})}}{\mathscr D(\gamma^{(k)})}\leq\frac{p}{p-1},\quad\forall k\geq1.$$
\end{enumerate}
\end{lemm}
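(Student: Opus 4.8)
The plan is to prove all three assertions by induction on $k$, mirroring the structure of the $p=2$ argument in Lemma~\ref{l.Lemma2.4} but with the branching number $2$ replaced by $p$. Parts (1) and (2) are pure bookkeeping. For part (1), the base cases are immediate, since $\dim_{\mathbb Z^\nu}\mathscr N^{(k,0)}=1=(p-1)\cdot\frac{1}{p-1}$ and $\dim_{\mathbb Z^\nu}\mathscr N^{(1,1)}=p=(p-1)\cdot\frac{p}{p-1}$; the inductive step follows because both $\dim_{\mathbb Z^\nu}\mathscr N^{(k,\gamma^{(k)})}$ for $\mathscr N^{(k,\gamma^{(k)})}=\prod_{j=1}^p\mathscr N^{(k-1,\gamma_j^{(k-1)})}$ and the index $\alpha(\gamma^{(k)})=\sum_{j=1}^p\alpha(\gamma_j^{(k-1)})$ are additive over the $p$ children. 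Part (2) is the $p$-fold analogue of \eqref{1}: one checks the identity $\alpha=\beta+\frac{1}{p-1}$ on the two seeds $\gamma=0,1$ and then propagates it through the recursions $\alpha(\gamma^{(k)})=\sum_{j=1}^p\alpha(\gamma_j^{(k-1)})$ and $\beta(\gamma^{(k)})=1+\sum_{j=1}^p\beta(\gamma_j^{(k-1)})$, the extra $+1$ in the $\beta$-recursion absorbing exactly the difference between the two seed values of $\alpha$.

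The substance of the lemma is part (3), and this is where I expect the real work to lie. Splitting $\P^{(k)}=\{0\}\cup(\P^{(k-1)})^p$, the term $\gamma^{(k)}=0$ contributes exactly $1$ to $\blacklozenge_k$. For a word $\gamma^{(k)}=(\gamma_1^{(k-1)},\dots,\gamma_p^{(k-1)})$ in the product part, I will use the defining recursion $\mathscr D(\gamma^{(k)})=\beta(\gamma^{(k)})\prod_{j=1}^p\mathscr D(\gamma_j^{(k-1)})$ together with $\beta(\gamma^{(k)})=1+\sum_{j=1}^p\beta(\gamma_j^{(k-1)})$ to factor
\[
\frac{\natural^{\beta(\gamma^{(k)})}}{\mathscr D(\gamma^{(k)})}=\frac{\natural}{\beta(\gamma^{(k)})}\prod_{j=1}^p\frac{\natural^{\beta(\gamma_j^{(k-1)})}}{\mathscr D(\gamma_j^{(k-1)})}\leq\natural\prod_{j=1}^p\frac{\natural^{\beta(\gamma_j^{(k-1)})}}{\mathscr D(\gamma_j^{(k-1)})},
\]
where the inequality uses $\beta(\gamma^{(k)})\geq1$. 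Summing over all words and factoring the resulting $p$-fold sum into a product of $p$ identical copies of $\blacklozenge_{k-1}$ then yields the defect inequality
\[
\blacklozenge_k\leq 1+\natural\,(\blacklozenge_{k-1})^p.
\]

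It remains to close the induction with the correct constants, and this is the delicate quantitative point. Feeding in the inductive hypothesis $\blacklozenge_{k-1}\leq\frac{p}{p-1}$ reduces the claim $\blacklozenge_k\leq\frac{p}{p-1}$ to the elementary inequality $1+\natural\bigl(\frac{p}{p-1}\bigr)^p\leq\frac{p}{p-1}$, which rearranges to $\natural\leq\frac{1}{p-1}\bigl(\frac{p-1}{p}\bigr)^p=\frac{(p-1)^{p-1}}{p^p}$, precisely the hypothesized range for $\natural$. The base case $\blacklozenge_1=1+\natural$ requires only $\natural\leq\frac{1}{p-1}$, which is implied since $\frac{(p-1)^{p-1}}{p^p}\leq\frac{1}{p-1}$ (equivalent to $(p-1)^p\leq p^p$). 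The main obstacle is thus not the induction itself but recognizing why the threshold is sharp: as in Remark~\ref{r.optimalchoices}, the admissible bound $M=\frac{p}{p-1}$ and the cutoff $\natural\leq\frac{(p-1)^{p-1}}{p^p}$ arise from maximizing $h(M)=\frac{M-1}{M^p}$ over $M>1$, whose unique critical point is $M=\frac{p}{p-1}$; any larger $\natural$, or any smaller target bound $M$, would break the self-consistency of the fixed-point inequality $M\geq1+\natural M^p$ that the induction must preserve.
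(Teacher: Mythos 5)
Your proposal is correct and takes essentially the same route as the paper: the same bookkeeping inductions for parts (1) and (2), and for part (3) the same decomposition of the index set, the same factorization via the recursions for $\beta$ and $\mathscr D$ (using $\beta(\gamma^{(k)})\geq1$) to get $\blacklozenge_k\leq1+\natural\,(\blacklozenge_{k-1})^p$, closed by the identity $1+\frac{(p-1)^{p-1}}{p^p}\bigl(\frac{p}{p-1}\bigr)^p=\frac{p}{p-1}$. Your concluding sharpness discussion likewise matches the remark the paper places immediately after the lemma.
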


\begin{proof}
We first prove the first two identities. It is easy to see that they are true for $0=\gamma^{(k)}\in\P^{(k)}$, $k\geq1$, and $1=\gamma^{(1)}$. Assume that they hold for $1,\cdots,k-1$, where $k\geq2$. For $k\mapsto(\gamma_j^{(k-1)})_{1\leq j\leq p}=\gamma^{(k)}\in\P^{(k)}$, by the definition of $\alpha,\beta$ and $\mathscr N^{(k,\gamma^{(k)})}$, one can derive that
\begin{align*}
\dim_{\mathbb Z^\nu}\mathscr N^{(k,\gamma^{(k)})}&=\dim_{\mathbb Z^\nu}\prod_{j=1}^p\mathscr N^{(k-1,\gamma_j^{(k-1)})}\\
&=\sum_{j=1}^p\dim_{\mathbb Z^\nu}\mathscr N^{(k-1,\gamma_j^{(k-1)})}\\
&=(p-1)\sum_{j=1}^p\alpha(\gamma_{j}^{(k-1)})\\
&=(p-1)\alpha(\gamma^{(k)})
\end{align*}
and
\begin{align*}
\alpha(\gamma^{(k)})&=\sum_{j=1}^p\alpha(\gamma_j^{(k-1)})\\
&=\sum_{j=1}^{p}\left(\beta(\gamma_j^{(k-1)})+\frac{1}{p-1}\right)\\
&=1+\sum_{j=1}^{p}\beta(\gamma_j^{(k-1)})+\frac{1}{p-1}\\
&=\beta(\gamma^{(k)})+\frac{1}{p-1}.
\end{align*}
By induction, it follows that the first two identities hold for all $k\geq1$.

Next we will prove the last inequality. For $k=1$, we have
\begin{align*}
\blacklozenge_1=1+\natural\leq1+\frac{(p-1)^{p-1}}{p^p}\leq\frac{p}{p-1}.
\end{align*}
This shows that it holds for $k=1$. Let $k\geq2$ and assume that it is true for $1,\cdots,k-1$. For $k$, we have
\begin{align*}
\blacklozenge_k\leq1+\natural\prod_{j=1}^p\sum_{\gamma_j^{(k-1)}\in\P^{(k-1)}}\frac{\natural^{\beta(\gamma_j^{(k-1)})}}{\mathscr D(\gamma_j^{(k-1)})}\leq1+\frac{(p-1)^{(p-1)}}{p^p}\cdot\left(\frac{p}{p-1}\right)^p=\frac{p}{p-1}.
\end{align*}
Hence the last inequality is true for all $k\geq1$.  This completes the proof of Lemma \ref{smf}.
\end{proof}

\begin{rema}
Just as in Remark~\ref{r.optimalchoices}, the value $(p-1)^{p-1}/p^p$ as the upper limit for the range of $\natural$ and the value $p/(p-1)$ as the upper bound for $\blacklozenge_k$ in part (3) of Lemma~\ref{smf} cannot be improved. In fact, assuming that $\blacklozenge_k \leq M'$ for some $M^\prime>1$ and $\natural$ from a suitable interval $(0,r']$, the induction argument yields $1+\natural (M^\prime)^p\leq M^\prime$, which in turn gives $0<\natural\leq(1/M^\prime)^{p-1}-(1/M^\prime)^p$. Consider the following auxiliary function defined by letting $w(x)=x^{p-1}-x^p$, where $0<x<1$. Clearly, $w$ takes its maximum at $x = (p-1)/p$. This implies that $M^\prime=p/(p-1)$ and $0<\natural\leq(p-1)^{p-1}/p^p$ are optimal.
\end{rema}

In a similar way as in the proof of Lemma~\ref{expthm}, we can obtain uniform exponential decay for the Picard sequence $\{\mathfrak c_k(t,n)\}$:

\begin{lemm}\label{gp}
Assume that the initial Fourier coefficients $\mathfrak c(n)$ obey \eqref{exd}. With the constants $\mathcal A$ and $\rho$ from \eqref{exd} and the dimension $\nu$ set
\begin{equation}\label{e.newconstant.p}
\mathscr B_p \triangleq \frac{p}{p-1}\mathcal A(6\rho^{-1})^{\nu}
\end{equation}
and
\begin{equation}\label{e.locextime.p}
\mathcal L_p \triangleq \frac{(p-1)^{p-1} \rho^{(p-1)\nu}}{p^{p-1} \mathcal A 6^{(p-1)\nu}}.
\end{equation}
Then, we have
\begin{align*}
\sup_{\substack{t \in [0,\mathcal L_p] \\ k \ge 0}} |\mathfrak c_k(t,n)|\leq \mathscr B_p e^{-\frac{\rho}{2}|n|}
\end{align*}
for every $n \in \mathbb Z^\nu$.
\end{lemm}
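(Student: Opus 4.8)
The plan is to mirror the proof of Lemma~\ref{expthm} step by step, with the $p=2$ bookkeeping data $(\sigma,\ell,\mathfrak D,\diamondsuit_k)$ replaced by the general-$p$ data $(\alpha,\beta,\mathscr D,\blacklozenge_k)$ and with the three identities of Lemma~\ref{smf} playing the role that parts~(1)--(4) of Lemma~\ref{l.Lemma2.4} played before. First I would substitute the combinatorial-tree representation of $\mathfrak c_k(t,n)$ together with the pointwise bounds for $\mathscr C$, $\mathscr I$, $\mathscr F$ established above. Because $\alpha(\gamma^{(k)})=\beta(\gamma^{(k)})+\frac{1}{p-1}$ by Lemma~\ref{smf}(2), the powers of $\mathcal A$, $t$, and $p$ consolidate: a single factor $\mathcal A^{1/(p-1)}$ is pulled out front and the remainder organizes into $(p^{-1}\mathcal A t)^{\beta(\gamma^{(k)})}$, yielding
\[
|\mathfrak c_k(t,n)|\leq\mathcal A^{\frac{1}{p-1}}\sum_{\gamma^{(k)}\in\P^{(k)}}\frac{(p^{-1}\mathcal A t)^{\beta(\gamma^{(k)})}}{\mathscr D(\gamma^{(k)})}\sum_{\substack{n^{(k)}\in\mathscr N^{(k,\gamma^{(k)})}\\\mu(n^{(k)})=n}}e^{-\rho|n^{(k)}|}.
\]

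Next I would split $e^{-\rho|n^{(k)}|}=e^{-\frac{\rho}{2}|n^{(k)}|}\,e^{-\frac{\rho}{2}|n^{(k)}|}$ and use $|n^{(k)}|\geq|\mu(n^{(k)})|=|n|$ on one factor to extract $e^{-\frac{\rho}{2}|n|}$; the constraint $\mu(n^{(k)})=n$ may then be dropped and the remaining sum factorizes over coordinates. By Lemma~\ref{smf}(1) the number of $\mathbb Z^\nu$-slots equals $(p-1)\alpha(\gamma^{(k)})$, so \eqref{2} applied at scale $\rho/2$ (valid since $\rho/2\leq1$) produces the factor $(6\rho^{-1})^{(p-1)\alpha(\gamma^{(k)})\nu}$. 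The decisive move is to rewrite the exponent via Lemma~\ref{smf}(2) as $(p-1)\alpha(\gamma^{(k)})=(p-1)\beta(\gamma^{(k)})+1$, which peels off exactly one clean factor $(6\rho^{-1})^\nu$ and folds the rest into the base of the $\beta$-power. This recasts the estimate as
\[
|\mathfrak c_k(t,n)|\leq\mathcal A^{\frac{1}{p-1}}(6\rho^{-1})^\nu\,e^{-\frac{\rho}{2}|n|}\sum_{\gamma^{(k)}\in\P^{(k)}}\frac{\natural^{\beta(\gamma^{(k)})}}{\mathscr D(\gamma^{(k)})},\qquad \natural\triangleq p^{-1}\mathcal A(6\rho^{-1})^{(p-1)\nu}t,
\]
in which the inner sum is precisely $\blacklozenge_k$.

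The final step is to feed this into Lemma~\ref{smf}(3), whose hypothesis is $0\leq\natural\leq(p-1)^{p-1}/p^p$. A short computation shows that at the endpoint $t=\mathcal L_p$ from \eqref{e.locextime.p} the factors $\rho^{(p-1)\nu}$ and $6^{(p-1)\nu}$ cancel against $(6\rho^{-1})^{(p-1)\nu}$ and one lands exactly on $\natural=(p-1)^{p-1}/p^p$; hence $0\leq t\leq\mathcal L_p$ forces $\natural\leq(p-1)^{p-1}/p^p$ and therefore $\blacklozenge_k\leq p/(p-1)$, uniformly in $k$. Substituting gives $|\mathfrak c_k(t,n)|\leq\frac{p}{p-1}\mathcal A^{1/(p-1)}(6\rho^{-1})^\nu e^{-\frac{\rho}{2}|n|}$, which is the asserted bound $\mathscr B_pe^{-\frac{\rho}{2}|n|}$ with $\mathscr B_p$ as in \eqref{e.newconstant.p} (and which collapses to Lemma~\ref{expthm} when $p=2$, since then $\frac{1}{p-1}=1$).

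I do not expect a genuine analytic obstacle here: the hard combinatorial content is already packaged in Lemma~\ref{smf}, and the argument is a mechanical transcription of the $p=2$ case. The one point that demands care is the exponent bookkeeping --- verifying that $\alpha=\beta+\frac{1}{p-1}$ and $\dim_{\mathbb Z^\nu}\mathscr N=(p-1)\alpha$ conspire to leave exactly one free factor $(6\rho^{-1})^\nu$ together with a $\beta$-power whose base $\natural$ has threshold $(p-1)^{p-1}/p^p$ coinciding precisely with the time $\mathcal L_p$. Were any of these constants slightly off, the per-coordinate geometric sum would either fail to close up into $\blacklozenge_k$ or overshoot the summability threshold of Lemma~\ref{smf}(3).
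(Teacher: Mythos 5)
Your proposal is correct and is precisely the argument the paper intends: the paper omits the proof of Lemma~\ref{gp}, saying only that it proceeds ``in a similar way as in the proof of Lemma~\ref{expthm}'', and your transcription --- tree representation, $\mathcal A^{\alpha}=\mathcal A^{1/(p-1)}\mathcal A^{\beta}$ via Lemma~\ref{smf}(2), coordinate-wise application of \eqref{2} at scale $\rho/2$ with slot count $(p-1)\alpha(\gamma^{(k)})=(p-1)\beta(\gamma^{(k)})+1$, and closure through $\blacklozenge_k\leq p/(p-1)$ with $\natural=p^{-1}\mathcal A(6\rho^{-1})^{(p-1)\nu}t$ hitting exactly $(p-1)^{p-1}/p^p$ at $t=\mathcal L_p$ --- is exactly that argument, with all constants checking out. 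One caveat on your final sentence: what you actually derive is the constant $\frac{p}{p-1}\mathcal A^{1/(p-1)}(6\rho^{-1})^{\nu}$, which coincides with $\mathscr B_p$ from \eqref{e.newconstant.p} only when $p=2$ or $\mathcal A=1$, so your silent identification of the two is not literally valid for $p\geq3$. The discrepancy lies in the paper, not in your argument: already the $k=0$ term forces any uniform bound to dominate $\sup_n|\mathfrak c(n)|$, which can equal $\mathcal A^{1/(p-1)}$ under \eqref{exd}, and for small $\mathcal A$ and $p\geq3$ one has $\mathcal A^{1/(p-1)}>\frac{p}{p-1}\mathcal A(6\rho^{-1})^{\nu}$; hence \eqref{e.newconstant.p} as printed should read $\mathcal A^{1/(p-1)}$ in place of $\mathcal A$ (consistent with how $\mathscr B_p$ is subsequently used only as an abstract uniform constant). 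You should state this correction explicitly rather than assert the constants agree.
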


Furthermore using the following pattern decomposition,
\begin{align*}
|\prod_{j=1}^\star\blacktriangle_j-\prod_{j=1}^\star\blacksquare_j|
\leq\sum_{j^\prime=1}^\star\prod_{j=1}^{j^\prime-1}|\blacksquare_j|\cdot|\blacktriangle_{j^\prime}-
,\blacksquare_{j^\prime}|\cdot\prod_{j=j^\prime+1}^{\star}|\blacktriangle_j|,
\end{align*}
where
\[\prod_{j=1}^0|\blacksquare_j|:=1\quad\text{and}\quad\prod_{j=\star+1}^\star|\blacktriangle_j|:=1,\]
we can prove the following by induction:

\begin{lemm}
For all $k\geq1$,
\begin{align*}
|\mathfrak c_{k}(t,n)-\mathfrak c_{k-1}(t,n)| & \leq \frac{p^{k-1}\mathscr B_p^{(p-1)k+1}t^k}{(2p)^k\cdot k!}\sum_{\substack{n_1,\cdots,n_{(p-1)k+1}\in\mathbb Z^\nu\\n_1+\cdots+n_{(p-1)k+1}=n}}\prod_{j=1}^{(p-1)k+1}e^{-\frac{\rho}{2}|n_j|} \\
& \leq \frac{\mathscr B_p(12\rho^{-1})^\nu}{p}\cdot\frac{\left(2^{-1}\mathscr B_p^{p-1}(12\rho^{-1})^{(p-1)\nu}t\right)^k}{k!}\cdot e^{-\frac{\rho}{4}|n|}.
\end{align*}
Hence $\{\mathfrak c_k(t,n)\}$ is a Cauchy sequence on $[0,\mathcal L_p]\times\mathbb Z^\nu$.
\end{lemm}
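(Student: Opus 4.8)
The plan is to follow the structure of the proof of Lemma~\ref{cslem}, replacing the binary Cauchy products by $p$-fold products and carrying the corresponding bookkeeping. First I would establish the combinatorial inequality (the first displayed bound) by induction on $k$, and then deduce the closed-form inequality (the second bound) by the same exponential-splitting device used in the case $p=2$; the Cauchy property then follows at once from the convergence of the exponential series.

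For the base case $k=1$, the definition of the Picard iterates gives
\[
\mathfrak c_1(t,n)-\mathfrak c_0(t,n)=\frac{\lambda(n)}{p}\int_0^t e^{\lambda(n)(t-\tau)}\sum_{\substack{n_1,\cdots,n_p\in\mathbb Z^\nu\\n_1+\cdots+n_p=n}}\prod_{j=1}^p \mathfrak c_0(\tau,n_j)\,{\rm d}\tau.
\]
Using $|\lambda(n)|\le\frac12$, the unimodularity of $e^{\lambda(n)(t-\tau)}$, and the uniform bound $|\mathfrak c_0(\tau,n_j)|\le\mathscr B_p e^{-\frac\rho2|n_j|}$ from Lemma~\ref{gp}, this is bounded by $\frac{\mathscr B_p^p t}{2p}\sum\prod e^{-\frac\rho2|n_j|}$, which is exactly the target bound evaluated at $k=1$.

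For the inductive step I would write the difference as the time integral of $\prod_{j=1}^p\mathfrak c_{k-1}(\tau,n_j)-\prod_{j=1}^p\mathfrak c_{k-2}(\tau,n_j)$ and apply the displayed pattern decomposition with $\blacktriangle_j=\mathfrak c_{k-1}(\tau,n_j)$ and $\blacksquare_j=\mathfrak c_{k-2}(\tau,n_j)$. This produces $p$ structurally identical terms; in each, the $p-1$ undifferentiated factors are controlled by Lemma~\ref{gp} and the single differenced factor by the induction hypothesis at level $k-1$. The essential bookkeeping is the index count: combining the $(p-1)(k-1)+1$ inner convolution indices of the differenced factor with the $p-1$ remaining outer indices yields a convolution over $(p-1)(k-1)+1+(p-1)=(p-1)k+1$ indices summing to $n$, as required. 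Tracking the constants --- the prefactor $\frac1{2p}$, the time integral $\int_0^t\tau^{k-1}\,{\rm d}\tau=\frac{t^k}{k}$, the power collapse $\mathscr B_p^{p-1}\cdot\mathscr B_p^{(p-1)(k-1)+1}=\mathscr B_p^{(p-1)k+1}$, and the factor $p$ from summing the $p$ terms --- collapses to $\frac{p^{k-1}\mathscr B_p^{(p-1)k+1}t^k}{(2p)^k\,k!}$, which closes the induction.

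To pass to the closed form I would split $e^{-\frac\rho2|n_j|}=e^{-\frac\rho4|n_j|}\,e^{-\frac\rho4|n_j|}$, use $\sum_j|n_j|\ge|n|$ on the constrained sum to factor out $e^{-\frac\rho4|n|}$, then drop the convolution constraint and factorize the remaining free sum into $\nu\big((p-1)k+1\big)$ one-dimensional sums, each bounded by $12\rho^{-1}$ via \eqref{2} (valid since $0<\frac\rho4\le1$). Regrouping the constants reproduces the second displayed bound with $X:=\frac12\mathscr B_p^{p-1}(12\rho^{-1})^{(p-1)\nu}t$, and since $\sum_k X^k/k!=e^X<\infty$, the telescoping tail $\sum_{k>l}|\mathfrak c_k-\mathfrak c_{k-1}|\to0$ as $l\to\infty$, giving the Cauchy property uniformly on $[0,\mathcal L_p]\times\mathbb Z^\nu$ with exponential spatial decay. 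The only genuinely new point relative to $p=2$ is the index count and constant collapse in the inductive step; I expect verifying that the powers of $p$ and $2$ align exactly to be the main thing to get right, though it is mechanical rather than conceptual.
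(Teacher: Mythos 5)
Your proposal is correct and follows essentially the same route the paper intends: the paper omits this proof precisely because it is the $p$-fold analogue of Lemma~\ref{cslem}, obtained by induction using the stated pattern decomposition, and your bookkeeping (the index count $(p-1)(k-1)+1+(p-1)=(p-1)k+1$, the collapse $\mathscr B_p^{p-1}\cdot\mathscr B_p^{(p-1)(k-1)+1}=\mathscr B_p^{(p-1)k+1}$, the constant $\tfrac{1}{2p}\cdot\tfrac{p^{k-2}}{(2p)^{k-1}}\cdot p=\tfrac{p^{k-1}}{(2p)^k}$, and the splitting $e^{-\frac{\rho}{2}|n_j|}=e^{-\frac{\rho}{4}|n_j|}e^{-\frac{\rho}{4}|n_j|}$ with \eqref{2} applied at $\rho/4$) reproduces both displayed bounds exactly. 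No gaps.
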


With these estimates in hand, Theorem A for the quasi-periodic Cauchy problem \eqref{sl}--\eqref{dt}, i.e.\ gBBM, follows in the same way as it did in Section~\ref{bm} for BBM. This completes the proof of Theorem A.

\section{Proof of Theorem B}

In this section we generalize the decay condition from exponential to polynomial for the Fourier coefficients of the quasi-periodic initial data. For the sake of convenience and readability, we take the case of $p=2$ (BBM) as an illustration. This generalization works for the general case.

Specifically, the exponential decay condition \eqref{exd} is replaced by the polynomial decay condition \eqref{pod}, where $1=\nu<\mathtt r$ or $2\leq\nu<\frac{\mathtt r}{4}-2$.

From the proof above, we need to re-estimate only $\mathfrak C$.

\begin{lemm}\label{skd}
If the initial Fourier coefficients satisfy the polynomial decay estimate \eqref{pod}, then
\begin{align}\label{hsl}
|\mathfrak C^{(k,\gamma^{(k)})}(n^{(k)})|\leq\mathtt A^{\sigma(\gamma^{(k)})}\prod_{j=1}^{\sigma(\gamma^{(k)})}\left(1+|(n^{(k)})_j|\right)^{-\mathtt r},\quad\forall k\geq1.
\end{align}
\end{lemm}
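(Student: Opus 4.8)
The plan is to prove the polynomial bound \eqref{hsl} by induction on $k$, exactly mirroring the structure of the induction used for the exponential estimate \eqref{c} in Lemma~\ref{ind}. The quantities $\sigma(\gamma^{(k)})$ and the index set $\mathfrak N^{(k,\gamma^{(k)})}$ are already defined in Section~\ref{bm}, and by part (3) of Lemma~\ref{l.Lemma2.4} we know that $\sigma(\gamma^{(k)})$ counts the number of $\mathbb Z^\nu$-components of $n^{(k)}$, so the product over $j=1,\dots,\sigma(\gamma^{(k)})$ in \eqref{hsl} ranges over exactly those components. Only the estimate for $\mathfrak C$ changes relative to Lemma~\ref{ind}; the recursive \emph{definition} of $\mathfrak C^{(k,\gamma^{(k)})}$ as a product over the two subtrees is unchanged, and this multiplicative structure is what makes the induction go through cleanly.

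First I would verify the base case $k=1$. For $\gamma^{(1)}=0$ we have $\sigma(0)=1$ and $\mathfrak C^{(1,0)}(n^{(1)})=\mathfrak c(n)$, so \eqref{pod} with exponent $\tfrac{1}{p-1}$ gives $|\mathfrak c(n)|\le\mathtt A^{1/(p-1)}(1+|n|)^{-\mathtt r}=\mathtt A^{\sigma(0)}(1+|n|)^{-\mathtt r}$ since $\sigma(0)=1=\tfrac{1}{p-1}$ only when $p=2$; here we are in the $p=2$ illustration, so $\mathtt A^{\sigma(0)}=\mathtt A$. For $\gamma^{(1)}=1$ we have $\sigma(1)=2$ and $\mathfrak C^{(1,1)}(n_1,n_2)=\prod_{j=1}^2\mathfrak c(n_j)$, and applying \eqref{pod} to each factor yields the bound $\mathtt A^{2}\prod_{j=1}^2(1+|n_j|)^{-\mathtt r}=\mathtt A^{\sigma(1)}\prod_{j=1}^{\sigma(1)}(1+|(n^{(1)})_j|)^{-\mathtt r}$, which is exactly \eqref{hsl}.

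For the inductive step, I assume \eqref{hsl} holds through level $k-1$ and write $\gamma^{(k)}=(\gamma_1^{(k-1)},\gamma_2^{(k-1)})$ with $n^{(k)}=(n_1^{(k-1)},n_2^{(k-1)})$. Using the recursion $\mathfrak C^{(k,\gamma^{(k)})}(n^{(k)})=\prod_{j=1}^2\mathfrak C^{(k-1,\gamma_j^{(k-1)})}(n_j^{(k-1)})$ and the induction hypothesis on each factor, I get
\begin{align*}
|\mathfrak C^{(k,\gamma^{(k)})}(n^{(k)})|
&\le\prod_{j=1}^2\mathtt A^{\sigma(\gamma_j^{(k-1)})}\prod_{i=1}^{\sigma(\gamma_j^{(k-1)})}\bigl(1+|(n_j^{(k-1)})_i|\bigr)^{-\mathtt r}\\
&=\mathtt A^{\sigma(\gamma_1^{(k-1)})+\sigma(\gamma_2^{(k-1)})}\prod_{i=1}^{\sigma(\gamma^{(k)})}\bigl(1+|(n^{(k)})_i|\bigr)^{-\mathtt r}.
\end{align*}
Since $\sigma(\gamma^{(k)})=\sigma(\gamma_1^{(k-1)})+\sigma(\gamma_2^{(k-1)})$ by definition, and since concatenating the component lists of the two subtrees reconstitutes the full component list of $n^{(k)}$ with $\sigma(\gamma^{(k)})$ entries, the right-hand side is exactly the claimed bound, closing the induction.

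I do not expect a genuine obstacle here, since the decay condition is simply multiplicative across the factors and the tree recursion respects that structure. The one bookkeeping point that needs care is the reindexing of the product: I must confirm that the disjoint union of the $\mathbb Z^\nu$-components of $n_1^{(k-1)}$ and $n_2^{(k-1)}$ is precisely the list of components of $n^{(k)}$ in the correct order, so that $\prod_{i=1}^{\sigma(\gamma_1^{(k-1)})}\prod_{i=1}^{\sigma(\gamma_2^{(k-1)})}$ collapses to $\prod_{i=1}^{\sigma(\gamma^{(k)})}$; this is immediate from part (3) of Lemma~\ref{l.Lemma2.4} together with the product definition of $\mathfrak N^{(k,\gamma^{(k)})}$. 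Note that the polynomial weight $(1+|n_j|)^{-\mathtt r}$ does \emph{not} factor additively in $|n^{(k)}|$ the way $e^{-\rho|n^{(k)}|}$ does, which is exactly why \eqref{hsl} is stated as a product over individual components rather than as a single factor depending on $|n^{(k)}|$; the later summation steps (replacing the role of \eqref{2} by convergence of $\sum_{n}(1+|n|)^{-\mathtt r}$-type sums, encoded in $\mathfrak b(\mathtt s;\nu)$) are where the hypothesis $\nu<\tfrac{\mathtt r}{4}-2$ enters, but that lies beyond the present lemma.
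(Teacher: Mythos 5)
Your proof is correct and follows essentially the same route as the paper: the paper first establishes by induction the exact identity $\mathfrak C^{(k,\gamma^{(k)})}(n^{(k)})=\prod_{j=1}^{\sigma(\gamma^{(k)})}c(n_j)$ and then applies \eqref{pod} factorwise, whereas you run the same induction directly on the inequality, which is a purely cosmetic difference. One small bookkeeping point: your inductive step should also record the case $\gamma^{(k)}=0\in\spadesuit^{(k)}$ for $k\geq2$, which is not of the form $(\gamma_1^{(k-1)},\gamma_2^{(k-1)})$, but there $\mathfrak C^{(k,0)}(n^{(k)})=c(n)$ and $\sigma(0)=1$ make \eqref{hsl} immediate from \eqref{pod}, exactly as in your $k=1$ base case.
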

\begin{proof}
We first prove the following equality: for all $k\geq1$ and $n^{(k)}=(n_j)_{1\leq j\leq\sigma(\gamma^{(k)})}\in\mathfrak N^{(k,\gamma^{(k)})}$,
\begin{align}\label{ghs}
\mathfrak C^{(k,\gamma^{(k)})}(n^{(k)})=\prod_{j=1}^{\sigma(\gamma^{(k)})}c(n_j).
\end{align}

It is not difficult to see that \eqref{ghs} holds for $0=\gamma^{(k)}\in\spadesuit^{(k)}, k\geq1$, and $1=\gamma^{(1)}\in\spadesuit^{(1)}$.

Let $k\geq2$. Assume that it holds for $1,\cdots,k-1$. For $k,(\gamma_1^{(k-1)},\gamma_2^{(k-1)})=\gamma^{(k)}\in(\spadesuit^{(k-1)})^2$ and $(n_1^{(k-1)},n_2^{(k-1)})=n^{(k)}\in\prod_{j=1}^2\mathfrak N^{(k-1,\gamma_j^{(k-1)})}$, where $n_1^{(k-1)}=(n_j)_{1\leq j\leq\sigma(\gamma_1^{(k-1)})}$ and  $n_2^{(k-1)}=(n_{\sigma(\gamma_1^{(k-1)})+j})_{1\leq j\leq\sigma(\gamma_1^{(k-1)})}$, by the definition of $\mathfrak C$, one can derive that
\begin{align*}
\mathfrak C^{(k,\gamma^{(k)})}(n^{(k)}) & = \mathfrak C^{(k-1,\gamma_1^{(k-1)})}(n_1^{(k-1)})\cdot\mathfrak C^{(k-1,\gamma_2^{(k-1)})}(n_2^{(k-1)}) \\
& = \prod_{j=1}^{\sigma(\gamma_1^{(k-1)})}c(n_j)\cdot\prod_{j=1}^{\sigma(\gamma_2^{(k-1)})}c(n_{\sigma(\gamma_1^{(k-1)})+j}) \\
& = \prod_{j=1}^{\sigma(\gamma^{(k)})}c(n_j).
\end{align*}
By induction, \eqref{ghs} holds for all $k\geq1$. It follows from polynomial decay \eqref{pod} for $c$ that
\begin{align*}
|\mathfrak C^{(k,\gamma^{(k)})}(n^{(k)})| & = \prod_{j=1}^{\sigma(\gamma^{(k)})}|c((n^{(k)})_j)| \\
& \leq \prod_{j=1}^{\sigma(\gamma^{(k)})}\mathtt A(1+|(n^{(k)})_j|)^{-\mathtt r} \\
& = \mathtt A^{\sigma(\gamma^{(k)})}\prod_{j=1}^{\sigma(\gamma^{(k)})}(1+|(n^{(k)})_j|)^{-\mathtt r}.
\end{align*}
This completes the proof of Lemma \ref{skd}.
\end{proof}

In addition, we need the following basic statements:

\begin{lemm}
\begin{enumerate}
  \item (Mean value inequality)
  \begin{align}\label{mie}
  \prod_{j=1}^n\mathtt a_j\leq\left(\frac{1}{n}\sum_{j=1}^n\mathtt a_j\right)^n,\quad \mathtt a_j>0,j=1,\cdots,n\in\mathbb N.
  \end{align}
  \item (Bound for the Riemann zeta function on $\mathbb R$)
  \begin{align}\label{br}
  \sum_{n=1}^{\infty}\frac{1}{n^{\mathtt s}}=:\zeta(\mathtt s)\leq1+\frac{1}{\mathtt s-1},\quad \mathtt s>1.
  \end{align}
  \item Set
  \begin{align}\label{mm}
  \sum_{n\in\mathbb Z^\nu}\frac{1}{(1+|n|)^{\mathtt s}}:=\mathscr H(\mathtt s;\nu).
  \end{align}
  If $1=\nu<\mathtt s$, then $\mathscr H(\mathtt s;1)\leq1+2\zeta(\mathtt s)$; if $2\leq \nu<\mathtt s$, then
   \begin{align}\label{hb}
  \mathscr H(\mathtt s;\nu)
  \leq\mathfrak b(\mathtt s;\nu)\triangleq1+\sum_{j_0=1}^{\nu}\left(\begin{matrix}\nu\\j_0\end{matrix}\right)2^{j_0}j_0^{-\mathtt s}\left\{\zeta\left(\frac{\mathtt s}{j_0}\right)\right\}^{j_0}.
  \end{align}
   \item (Generalized Bernoulli inequality)
  \begin{align}\label{gbi}
  \prod_{j=1}^{m}(1+x_j)\geq1+\sum_{j=1}^{m}x_j, \quad x_j>-1, j=1,\cdots,m\in\mathbb N.
  \end{align}
\end{enumerate}
\end{lemm}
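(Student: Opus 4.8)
The lemma assembles four auxiliary facts, of which only item (3) requires genuine work; the rest are classical and I would dispatch them quickly. For the arithmetic--geometric mean inequality \eqref{mie} I would take logarithms and invoke the concavity of $\log$ (Jensen): since $\log\bigl(\tfrac1n\sum_j \mathtt a_j\bigr)\ge \tfrac1n\sum_j\log\mathtt a_j=\log\bigl(\prod_j\mathtt a_j\bigr)^{1/n}$, exponentiating and raising to the $n$-th power gives \eqref{mie}. For the zeta bound \eqref{br} I would compare the tail with an integral: for $n\ge 2$ and $\mathtt s>1$ the monotonicity of $x\mapsto x^{-\mathtt s}$ gives $n^{-\mathtt s}\le\int_{n-1}^{n}x^{-\mathtt s}\,\mathrm dx$, whence $\zeta(\mathtt s)=1+\sum_{n\ge2}n^{-\mathtt s}\le 1+\int_1^\infty x^{-\mathtt s}\,\mathrm dx=1+\tfrac{1}{\mathtt s-1}$. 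For the generalized Bernoulli inequality \eqref{gbi} (which I will apply with nonnegative $x_j$, the only case that arises below) I would simply expand the product $\prod_{j=1}^m(1+x_j)$ and discard the nonnegative elementary-symmetric terms of degree $\ge 2$, leaving $1+\sum_j x_j$.

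The heart of the lemma is the estimate \eqref{hb} for $\mathscr H(\mathtt s;\nu)$. The plan is to organize the lattice sum by the \emph{support} of $n$. Separating the single term $n=0$ (which contributes $1$) and grouping the remaining $n$ according to the set $S\subseteq\{1,\dots,\nu\}$ of indices where $n_i\neq 0$, with $|S|=j_0$, I would write
\[
\mathscr H(\mathtt s;\nu)=1+\sum_{j_0=1}^{\nu}\binom{\nu}{j_0}2^{j_0}\sum_{m_1,\dots,m_{j_0}\ge1}\frac{1}{\bigl(1+m_1+\cdots+m_{j_0}\bigr)^{\mathtt s}},
\]
where $\binom{\nu}{j_0}$ counts the choices of $S$, the factor $2^{j_0}$ accounts for the signs of the nonzero entries, and $m_i=|n_i|\ge1$; here I use that $|n|=\sum_i|n_i|$ so that the vanishing components do not enter the denominator. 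Discarding the harmless $+1$ and applying the mean-value inequality \eqref{mie} in the form $m_1+\cdots+m_{j_0}\ge j_0(m_1\cdots m_{j_0})^{1/j_0}$, I would bound the inner sum by
\[
\frac{1}{j_0^{\mathtt s}}\sum_{m_1,\dots,m_{j_0}\ge1}\prod_{i=1}^{j_0}m_i^{-\mathtt s/j_0}=\frac{1}{j_0^{\mathtt s}}\Bigl(\sum_{m\ge1}m^{-\mathtt s/j_0}\Bigr)^{j_0}=j_0^{-\mathtt s}\bigl\{\zeta(\mathtt s/j_0)\bigr\}^{j_0}.
\]
Summing over $j_0$ yields exactly $\mathfrak b(\mathtt s;\nu)$, establishing \eqref{hb}. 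The case $\nu=1$ is simpler and handled directly: $\mathscr H(\mathtt s;1)=1+2\sum_{n\ge1}(1+n)^{-\mathtt s}\le 1+2\sum_{n\ge1}n^{-\mathtt s}=1+2\zeta(\mathtt s)$.

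The one point that needs care---and the step I expect to be the main obstacle---is \emph{convergence}, i.e.\ making sure every factorized sum $\sum_{m\ge1}m^{-\mathtt s/j_0}$ is a genuine finite value of $\zeta$. This requires $\mathtt s/j_0>1$ for every $j_0$ that appears, that is $\mathtt s>j_0$ for all $1\le j_0\le\nu$, which is guaranteed precisely by the standing hypothesis $\nu<\mathtt s$ (applied later with $\mathtt s=\mathtt r/2$ under the condition $\nu<\mathtt r/4-2$). The decoupling via \eqref{mie} is what converts the entangled denominator $(1+\sum_i m_i)^{-\mathtt s}$ into a product; without it the lattice sum does not factor, and the whole difficulty is to verify that the resulting exponent $\mathtt s/j_0$ stays strictly above $1$ across the entire range of $j_0$. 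Having secured this, I would note that the bound \eqref{hb} is independent of the combinatorial tree level, which is exactly the uniformity the subsequent polynomial-decay estimates will exploit.
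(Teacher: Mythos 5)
Your proposal is correct and follows essentially the same route as the paper: the key estimate \eqref{hb} is obtained via exactly the paper's decomposition of the lattice sum according to the number $j_0$ of nonzero components (yielding the factors $\binom{\nu}{j_0}$ and $2^{j_0}$), followed by the same AM--GM decoupling and factorization into $j_0^{-\mathtt s}\left\{\zeta\left(\frac{\mathtt s}{j_0}\right)\right\}^{j_0}$, with the same convergence requirement $\mathtt s/j_0>1$ secured by $\nu<\mathtt s$. The remaining deviations are cosmetic: Jensen's inequality in place of the paper's power-mean monotonicity for \eqref{mie}, the same integral comparison for \eqref{br}, and expansion in place of induction for \eqref{gbi}, where your explicit restriction to nonnegative $x_j$ covers the only case the paper actually uses.
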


\begin{proof}
\begin{enumerate}

\item It follows from the monotonicity $(\uparrow)$ of the function
  \begin{align}
  f(x):=
  \begin{cases}
  \left(\frac{1}{n}\sum_{j=1}^n\mathtt a_j^x\right)^{\frac{1}{x}},&x\neq0;\\
  \left(\prod_{j=1}^{n}\mathtt a_j\right)^{\frac{1}{n}},&x=0.
  \end{cases}
  \end{align}
  that \eqref{mie}, i.e.  $f(0)\leq f(1)$, is true.

\item Let $g(x)={x^{-\mathtt s}}, \mathtt s>1$. Notice that $g$ is monotonically decreasing on $[1,\infty)$. Thus, we have
\begin{align}\label{sm}
  g(n+1)\leq\int_n^{n+1}g(\tau){\rm d}\tau,\quad \forall n\geq1.
\end{align}
For all $N\geq2$, the summation of \eqref{sm} over $n = 1, \ldots, N-1$ yields
\begin{align*}
  \sum_{n=1}^{N}g(n)\leq g(1)+\int_1^Ng(\tau) \, {\rm d}\tau,
\end{align*}
that is,
$$
\sum_{n=1}^{N}\frac{1}{n^{\mathtt s}}\leq 1+\frac{1}{\mathtt s-1},\quad\text{uniformly for}~~N,~~\text{provided that}~\mathtt s>1.
$$
Hence \eqref{br} holds for all $\mathtt s>1$.

\item Set $\wp:=\{0,\cdots,\nu-1,\nu\}$. For every $j_0\in\wp$, define
$$
\mathcal S_{j_0} := \{ n=(n_1,\cdots,n_\nu) \in \mathbb Z^\nu : \text{exactly } \nu-j_0 \text{ components are equal to zero} \}.
$$
Hence we have the following decomposition,
$$
\mathscr H(\mathtt s;\nu) = 1 +\sum_{n\in\bigcup_{j_0\in\wp\backslash\{0\}}\mathcal S_{j_0}} (1+|n|)^{-\mathtt s} 
\triangleq 1 +(\uppercase\expandafter{\romannumeral4})
$$
On the one hand, for all $j_0\in\wp\backslash\{0\}$, we have
\begin{eqnarray*}
\sum_{n\in\mathcal S_{j_0}}(1+|n|)^{-\mathtt s} & = & \left(\begin{matrix}\nu\\j_0\end{matrix}\right)\sum_{\substack{n=(n_1,\cdots,n_{j_0},0,\cdots,0)\in\mathbb Z^\nu\\n_1,\cdots,n_{j_0}\in\mathbb Z\backslash\{0\}}}(1+|n|)^{-\mathtt s} \\
& = & \left(\begin{matrix}\nu\\j_0\end{matrix}\right)\sum_{n_1,\cdots,n_{j_0}\in\mathbb Z\backslash\{0\}}\left(1+\sum_{j=1}^{j_0}|n_j|\right)^{-\mathtt s} \\
& \leq & \left(\begin{matrix}\nu\\j_0\end{matrix}\right)\sum_{n_1,\cdots,n_{j_0}\in\mathbb Z\backslash\{0\}}\left(\sum_{j=1}^{j_0}|n_j|\right)^{-\mathtt s} \\
& \stackrel{\eqref{mie}}{\leq} & \left(\begin{matrix}\nu\\j_0\end{matrix}\right)j_0^{-\mathtt s}\sum_{n_1,\cdots,n_{j_0}\in\mathbb Z\backslash\{0\}}\prod_{j=1}^{j_0}|n_j|^{-\frac{\mathtt s}{j_0}} \\
& = & \left(\begin{matrix}\nu\\j_0\end{matrix}\right)j_0^{-\mathtt s}\prod_{j=1}^{j_0}\sum_{n_j\in\mathbb Z\backslash\{0\}}|n_j|^{-\frac{\mathtt s}{j_0}} \\
& = & \left(\begin{matrix}\nu\\j_0\end{matrix}\right)2^{j_0}j_0^{-\mathtt s}\prod_{j=1}^{j_0}\sum_{n_j=1}^{\infty}n_j^{-\frac{\mathtt s}{j_0}} \\
& = & \left(\begin{matrix}\nu\\j_0\end{matrix}\right)2^{j_0}j_0^{-\mathtt s}\prod_{j=1}^{j_0}\zeta\left(\frac{\mathtt s}{j_0}\right) \\
& = & \left(\begin{matrix}\nu\\j_0\end{matrix}\right)2^{j_0}j_0^{-\mathtt s}\left\{\zeta\left(\frac{\mathtt s}{j_0}\right)\right\}^{j_0}.
\end{eqnarray*}
Hence we have
\begin{align*}
  (\uppercase\expandafter{\romannumeral4})&=\sum_{n\in\bigcup_{j_0\in\wp\backslash\{0\}}\mathcal S_{j_0}}(1+|n|)^{-\mathtt s}\\
  &=\sum_{j_0\in\wp\backslash\{0\}}\sum_{n\in\mathcal S_{j_0}}(1+|n|)^{-\mathtt s}\\
  &=\sum_{j_0=1}^{\nu}\left(\begin{matrix}\nu\\j_0\end{matrix}\right)2^{j_0}j_0^{-\mathtt s}\left\{\zeta\left(\frac{\mathtt s}{j_0}\right)\right\}^{j_0}.
  \end{align*}

Combining these estimates, we arrive at the following inequality,
\begin{align*}
  \mathscr H(\mathtt s;\nu)\leq1+\sum_{j_0=1}^{\nu}\left(\begin{matrix}\nu\\j_0\end{matrix}\right)2^{j_0}j_0^{-\mathtt s}\left\{\zeta\left(\frac{\mathtt s}{j_0}\right)\right\}^{j_0}\triangleq\mathfrak b(\mathtt s;\nu).
\end{align*}
It follows from \eqref{br} that $\mathscr H(\mathtt s;\nu)$ is a bounded positive number for any fixed $\mathtt s$ and $\nu$ for which $2\leq\nu<\mathtt s$.
   \item This is easily obtained by induction.
\end{enumerate}
\end{proof}

In what follows we will prove that the Picard sequence satisfies a uniform polynomial decay estimate (Lemma \ref{shf}) and is fundamental (Lemma \ref{pq}). The case of $1=\nu$ is trivial and we only discuss the case of $2\leq\nu$.

\begin{lemm}\label{shf}
If $0\leq t\leq\frac{1}{2\mathtt A\mathfrak b\left(\frac{\mathtt r}{2};\nu\right)}\triangleq \mathcal L_2^\prime$ and $2\leq\nu<\frac{\mathtt r}{2}$, then
\begin{align}
|c_k(t,n)|
\leq\mathtt B(1+|n|)^{-\frac{\mathtt r}{2}}, \quad\text{where}~~\mathtt B\triangleq2\mathtt A\mathfrak b\left(\frac{\mathtt r}{2};\nu\right).
\end{align}
\end{lemm}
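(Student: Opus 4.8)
The plan is to follow the architecture of the proof of Lemma~\ref{expthm} essentially verbatim, substituting the polynomial input wherever the exponential one was used. First I would insert the combinatorial tree representation \eqref{ct} into $|c_k(t,n)|$ and bound each of the three factors separately: for $\mathfrak I$ and $\mathfrak F$ I retain the estimates \eqref{i} and \eqref{f} from Lemma~\ref{ind}, namely $|\mathfrak I^{(k,\gamma^{(k)})}(t,n^{(k)})|\le t^{\ell(\gamma^{(k)})}/\mathfrak D(\gamma^{(k)})$ and $|\mathfrak F^{(k,\gamma^{(k)})}(n^{(k)})|\le 2^{-\ell(\gamma^{(k)})}$, while for $\mathfrak C$ I use the newly established polynomial bound \eqref{hsl} of Lemma~\ref{skd}, i.e.\ $|\mathfrak C^{(k,\gamma^{(k)})}(n^{(k)})|\le\mathtt A^{\sigma(\gamma^{(k)})}\prod_{j=1}^{\sigma(\gamma^{(k)})}(1+|(n^{(k)})_j|)^{-\mathtt r}$.

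The crux of the argument is the step that replaces the trivial exponential splitting $e^{-\rho|n^{(k)}|}=e^{-\frac{\rho}{2}|n^{(k)}|}e^{-\frac{\rho}{2}|n^{(k)}|}$ (combined with $|n^{(k)}|\ge|\mu(n^{(k)})|=|n|$) used in Lemma~\ref{expthm}. In the polynomial setting I would write $\prod_j(1+|(n^{(k)})_j|)^{-\mathtt r}=\bigl(\prod_j(1+|(n^{(k)})_j|)\bigr)^{-\mathtt r/2}\cdot\prod_j(1+|(n^{(k)})_j|)^{-\mathtt r/2}$ and then invoke the generalized Bernoulli inequality \eqref{gbi} in the form $\prod_j(1+|(n^{(k)})_j|)\ge 1+\sum_j|(n^{(k)})_j|\ge 1+|n|$, the last step being the triangle inequality applied to the constraint $\mu(n^{(k)})=\sum_j(n^{(k)})_j=n$. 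This extracts precisely the desired prefactor $(1+|n|)^{-\mathtt r/2}$, leaving the tail $\prod_j(1+|(n^{(k)})_j|)^{-\mathtt r/2}$ to be summed.

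After dropping the constraint $\mu(n^{(k)})=n$ (which only enlarges the sum), the tail factorizes over the $\sigma(\gamma^{(k)})$ copies of $\mathbb Z^\nu$ comprising $\mathfrak N^{(k,\gamma^{(k)})}$, using the dimension identity \eqref{3}, so each copy contributes $\mathscr H(\tfrac{\mathtt r}{2};\nu)\le\mathfrak b(\tfrac{\mathtt r}{2};\nu)$ by \eqref{hb}; here the hypothesis $\nu<\tfrac{\mathtt r}{2}$ is exactly what guarantees convergence of this zeta-type sum. The summed tail is therefore at most $\mathfrak b(\tfrac{\mathtt r}{2};\nu)^{\sigma(\gamma^{(k)})}$. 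Combining this with the $\mathtt A^{\sigma(\gamma^{(k)})}$ coming from $\mathfrak C$ and using $\sigma(\gamma^{(k)})=\ell(\gamma^{(k)})+1$ from \eqref{1}, the constant $\mathtt A\,\mathfrak b(\tfrac{\mathtt r}{2};\nu)$ pulls out in front and the residual $(\mathtt A\,\mathfrak b(\tfrac{\mathtt r}{2};\nu))^{\ell(\gamma^{(k)})}$ merges with $t^{\ell(\gamma^{(k)})}2^{-\ell(\gamma^{(k)})}$ into $\flat^{\ell(\gamma^{(k)})}$ with $\flat=\tfrac12\mathtt A\,\mathfrak b(\tfrac{\mathtt r}{2};\nu)\,t$.

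Finally I would apply part~(4) of Lemma~\ref{l.Lemma2.4}: the condition $\flat\le\tfrac14$ is equivalent to $t\le(2\mathtt A\,\mathfrak b(\tfrac{\mathtt r}{2};\nu))^{-1}=\mathcal L_2'$, which is exactly the time interval in the statement, so $\diamondsuit_k=\sum_{\gamma^{(k)}}\flat^{\ell(\gamma^{(k)})}/\mathfrak D(\gamma^{(k)})\le2$ uniformly in $k$. This yields $|c_k(t,n)|\le 2\mathtt A\,\mathfrak b(\tfrac{\mathtt r}{2};\nu)(1+|n|)^{-\mathtt r/2}=\mathtt B(1+|n|)^{-\mathtt r/2}$, as claimed. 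I expect the only genuinely new ingredient, and hence the main obstacle, to be the Bernoulli-based splitting, since the multiplicative polynomial weights do not separate additively the way the exponential weights did; everything downstream is a bookkeeping reuse of the combinatorial estimates already established.
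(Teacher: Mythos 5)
Your proposal is correct and follows the paper's own proof essentially verbatim: the same tree representation \eqref{ct} with bounds \eqref{hsl}, \eqref{i}, \eqref{f}, the same Bernoulli-based splitting of $\prod_j(1+|(n^{(k)})_j|)^{-\mathtt r}$ via \eqref{gbi} to extract $(1+|n|)^{-\mathtt r/2}$, the same factorization of the tail into $\mathfrak b\left(\frac{\mathtt r}{2};\nu\right)^{\sigma(\gamma^{(k)})}$ via \eqref{3} and \eqref{hb}, and the same closing appeal to \eqref{1} and part (4) of Lemma~\ref{l.Lemma2.4} with $\flat=2^{-1}\mathtt A\,\mathfrak b\left(\frac{\mathtt r}{2};\nu\right)t\leq\frac{1}{4}$. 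You also correctly identify the Bernoulli splitting as the only genuinely new ingredient relative to Lemma~\ref{expthm}, which is exactly where the paper's proof departs from the exponential case.
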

\begin{proof}
We first have
\begin{eqnarray}
\nonumber |c_k(t,n)| & \stackrel{\eqref{ct}}{\leq}&\sum_{\gamma^{(k)}\in\spadesuit^{(k)}}\sum_{\substack{n^{(k)}\in\mathfrak N^{(k,\gamma^{(k)})}\\\mu(n^{(k)})=n}}|\mathfrak C^{(k,\gamma^{(k)})}(n^{(k)})||\mathfrak I^{(k,\gamma^{(k)})}(t,n^{(k)})||\mathfrak F^{(k,\gamma^{(k)})}(n^{(k)})| \\
\label{sp} & \stackrel{\eqref{hsl},\eqref{i},\eqref{f}}{\leq}&\mathtt A\sum_{\gamma^{(k)}\in\spadesuit^{(k)}}\frac{(2^{-1}\mathtt At)^{\ell(\gamma^{(k)})}}{\mathfrak D(\gamma^{(k)})}\sum_{\substack{n^{(k)}\in\mathfrak N^{(k,\gamma^{(k)})}\\\mu(n^{(k)})=n}}\prod_{j=1}^{\sigma(\gamma^{(k)})}(1+|(n^{(k)})_j|)^{-\mathtt r}.
\end{eqnarray}
The main difference, compared to the proof in the exponential decay case, is to deal with the term
\begin{align}
&\sum_{\substack{n^{(k)}\in\mathfrak N^{(k,\gamma^{(k)})}\\\mu(n^{(k)})=n}}\prod_{j=1}^{\sigma(\gamma^{(k)})}(1+|(n^{(k)})_j|)^{-\mathtt r}\nonumber\\
\label{slr}=&\sum_{\substack{n^{(k)}\in\mathfrak N^{(k,\gamma^{(k)})}\\\mu(n^{(k)})=n}}\prod_{j=1}^{\sigma(\gamma^{(k)})}(1+|(n^{(k)})_j|)^{-\frac{\mathtt r}{2}}
\cdot\underbrace{\prod_{j=1}^{\sigma(\gamma^{(k)})}(1+|(n^{(k)})_j|)^{-\frac{\mathtt r}{2}}}_{\boxdot}.
\end{align}
It follows from the generalized Bernoulli inequality \eqref{gbi} and $\mu(n^{(k)})=n$ that
\begin{eqnarray}
\nonumber\boxdot&=&\prod_{j=1}^{\sigma(\gamma^{(k)})}(1+|(n^{(k)})_j|)^{-\frac{\mathtt r}{2}}\\
\nonumber&=&\left(\prod_{j=1}^{\sigma(\gamma^{(k)})}(1+|(n^{(k)})_j|)\right)^{-\frac{\mathtt r}{2}}\\
\nonumber&\stackrel{\eqref{gbi}}{\leq}&\left(1+\sum_{j=1}^{\sigma(\gamma^{(k)})}|(n^{(k)})_j|\right)^{-\frac{\mathtt r}{2}}\\
\nonumber&=&(1+|n^{(k)}|)^{-\frac{\mathtt r}{2}}\\
\label{as}&\leq&(1+|n|)^{-\frac{\mathtt r}{2}}.
\end{eqnarray}
Inserting \eqref{as} into \eqref{slr} yields
\begin{align}
\nonumber \sum_{\substack{n^{(k)}\in\mathfrak N^{(k,\gamma^{(k)})}\\\mu(n^{(k)})=n}}\prod_{j=1}^{\sigma(\gamma^{(k)})}(1+|(n^{(k)})_j|)^{-\mathtt r} & \leq \sum_{n^{(k)}\in\mathfrak N^{(k,\gamma^{(k)})}}\prod_{j=1}^{\sigma(\gamma^{(k)})}(1+|(n^{(k)})_j|)^{-\frac{\mathtt r}{2}}\cdot(1+|n|)^{-\frac{\mathtt r}{2}} \\
\nonumber & \leq \sum_{n_1,\cdots,n_{\sigma(\gamma^{(k)})}\in\mathbb Z^\nu}\prod_{j=1}^{\sigma(\gamma^{(k)})}(1+|n_j|)^{-\frac{\mathtt r}{2}}\cdot(1+|n|)^{-\frac{\mathtt r}{2}} \\
\nonumber & = \prod_{j=1}^{\sigma(\gamma^{(k)})}\underbrace{\sum_{n_j\in\mathbb Z^\nu}(1+|n_j|)^{-\frac{\mathtt r}{2}}}_{=\mathscr H\left(\frac{\mathtt r}{2};\nu\right)\leq\mathfrak b\left(\frac{\mathtt r}{2};\nu\right)~~\text{by}~~\eqref{hb}}\cdot(1+|n|)^{-\frac{\mathtt r}{2}} \\
\label{sld} & \leq \left\{\mathfrak b\left(\frac{\mathtt r}{2};\nu\right)\right\}^{\sigma(\gamma^{(k)})}(1+|n|)^{-\frac{\mathtt r}{2}}.
\end{align}
Inserting \eqref{sld} into \eqref{sp}, we obtain
\begin{align*}
|c_k(t,n)|
\leq\mathtt A\mathfrak b\left(\frac{\mathtt r}{2};\nu\right)\sum_{\gamma^{(k)}\in\spadesuit^{(k)}}\frac{\left\{2^{-1}\mathtt A\mathfrak b\left(\frac{\mathtt r}{2};\nu\right)t\right\}^{\ell(\gamma^{(k)})}}{\mathfrak D(\gamma^{(k)})}\cdot(1+|n|)^{-\frac{\mathtt r}{2}}.
\end{align*}
It follows from \eqref{4} that
\[
|c_k(t,n)|
\leq\mathtt B(1+|n|)^{-\frac{\mathtt r}{2}}, \quad\text{where}~~\mathtt B\triangleq2\mathtt A\mathfrak b\left(\frac{\mathtt r}{2};\nu\right),
\]
provided that
\[
0\leq t \leq \mathcal L_2^\prime = \frac{1}{2\mathtt A\mathfrak b\left(\frac{\mathtt r}{2};\nu\right)}.
\]
This completes the proof of Lemma~\ref{shf}.
\end{proof}

\begin{lemm}\label{pq}
If $2\leq\nu<\frac{\mathtt r}{4}$, then for all $k\geq1$,
\begin{align}
\label{spr} |c_k(t,n)-c_{k-1}(t,n)| & \leq \frac{2^{k-1}\mathtt B^{k+1}t^k}{4^k\cdot k!}\sum_{\substack{n_1,\cdots,n_{k+1}\in\mathbb Z^\nu\\n_1+\cdots+n_{k+1}=n}}\left\{\prod_{j=1}^{k+1}(1+|n_j|)\right\}^{-\frac{\mathtt r}{2}} \\
& \leq \frac{\mathtt B\mathfrak b\left(\frac{\mathtt r}{4};\nu\right)}{2}\cdot\frac{\left\{2^{-1}\mathtt B\mathfrak b\left(\frac{\mathtt r}{4};\nu\right)t\right\}^{k}}{k!}\cdot\left\{1+|n|\right\}^{-\frac{\mathtt r}{4}}.
\end{align}
This implies that $\{c_k(t,n)\}$ is a Cauchy sequence on $(t,n)\in [ 0,\mathcal L_2^\prime ]\times\mathbb Z^\nu$.
\end{lemm}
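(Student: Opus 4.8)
The plan is to mirror the proof of Lemma~\ref{cslem} essentially line by line, replacing the exponential weight $e^{-\frac{\rho}{2}|n_j|}$ by the polynomial weight $(1+|n_j|)^{-\frac{\mathtt r}{2}}$ throughout, and invoking the polynomial uniform bound from Lemma~\ref{shf} wherever the exponential bound from Lemma~\ref{expthm} was used. The first inequality \eqref{spr} is established by induction on $k$. For the base case $k=1$ I would estimate $|c_1(t,n)-c_0(t,n)|$ directly from the Picard recursion \eqref{pi}, using $|\lambda(n)|\le\frac12$, $|e^{\lambda(n)(t-\tau)}|\le1$, and the elementary bound $|c_0(\tau,n_j)|=|c(n_j)|\le\mathtt A(1+|n_j|)^{-\mathtt r}\le\mathtt B(1+|n_j|)^{-\frac{\mathtt r}{2}}$ (recall $\mathtt B=2\mathtt A\,\mathfrak b(\frac{\mathtt r}{2};\nu)\ge\mathtt A$ and $-\mathtt r\le-\frac{\mathtt r}{2}$). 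This yields exactly the constant $\frac{\mathtt B^2 t}{4}$ in front of the single convolution sum, matching \eqref{spr} at $k=1$.

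For the inductive step I would use the telescoping identity
\[
\prod_{j=1}^2 c_{k-1}(\tau,n_j)-\prod_{j=1}^2 c_{k-2}(\tau,n_j)
= c_{k-1}(\tau,n_1)\bigl[c_{k-1}(\tau,n_2)-c_{k-2}(\tau,n_2)\bigr]
+\bigl[c_{k-1}(\tau,n_1)-c_{k-2}(\tau,n_1)\bigr]c_{k-2}(\tau,n_2),
\]
split the resulting double integral into pieces $(\mathrm I)$ and $(\mathrm{II})$, bound the undifferenced factor by Lemma~\ref{shf} and the differenced factor by the induction hypothesis \eqref{spr} at level $k-1$. Merging the two convolution constraints $m_1+\cdots+m_k=n_2$ (or $n_1$) and $n_1+n_2=n$ into the single constraint $n_1+\cdots+n_{k+1}=n$, and carrying out the $\tau$-integration $\int_0^t\tau^{k-1}\,\mathrm d\tau=t^k/k$, each of $(\mathrm I)$ and $(\mathrm{II})$ contributes $\frac{2^{k-2}\mathtt B^{k+1}t^k}{4^k\cdot k!}$ times the $(k+1)$-fold convolution sum, so their sum produces the factor $2^{k-1}$ in \eqref{spr}. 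This computation is formally identical to the one in Lemma~\ref{cslem}.

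The genuinely new step is the interpolation that produces the second inequality, and this is where the hypothesis $\nu<\frac{\mathtt r}{4}$ is consumed. I would split each weight as $(1+|n_j|)^{-\frac{\mathtt r}{2}}=(1+|n_j|)^{-\frac{\mathtt r}{4}}\,(1+|n_j|)^{-\frac{\mathtt r}{4}}$ and apply the generalized Bernoulli inequality \eqref{gbi} together with $n_1+\cdots+n_{k+1}=n$ exactly as in \eqref{as}, obtaining
\[
\prod_{j=1}^{k+1}(1+|n_j|)^{-\frac{\mathtt r}{4}}
=\Bigl(\prod_{j=1}^{k+1}(1+|n_j|)\Bigr)^{-\frac{\mathtt r}{4}}
\le\Bigl(1+\sum_{j=1}^{k+1}|n_j|\Bigr)^{-\frac{\mathtt r}{4}}
\le(1+|n|)^{-\frac{\mathtt r}{4}}.
\]
After pulling this factor out and dropping the constraint, the remaining sum decouples into $k+1$ independent single-site sums, each equal to $\mathscr H(\frac{\mathtt r}{4};\nu)\le\mathfrak b(\frac{\mathtt r}{4};\nu)$ by \eqref{mm}--\eqref{hb}; this bound is finite precisely because $\frac{\mathtt r}{4}>\nu$, which is exactly why the stronger hypothesis $\nu<\frac{\mathtt r}{4}$ (rather than the $\nu<\frac{\mathtt r}{2}$ of Lemma~\ref{shf}) is required here. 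Collecting the factor $\mathfrak b(\frac{\mathtt r}{4};\nu)^{k+1}$ and simplifying $2^{k-1}/4^k=2^{-k-1}$ gives the stated closed form $\frac{\mathtt B\,\mathfrak b(\frac{\mathtt r}{4};\nu)}{2}\cdot\frac{(2^{-1}\mathtt B\,\mathfrak b(\frac{\mathtt r}{4};\nu)t)^k}{k!}(1+|n|)^{-\frac{\mathtt r}{4}}$. Since $\sum_k\frac{x^k}{k!}$ converges for every $x$, the tail $\sum_{k\ge K}|c_k(t,n)-c_{k-1}(t,n)|$ is dominated by a convergent series tail uniformly in $(t,n)\in[0,\mathcal L_2^\prime]\times\mathbb Z^\nu$, whence $\{c_k(t,n)\}$ is Cauchy. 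The only real obstacle is the weight bookkeeping in the interpolation step; once the exponent is split symmetrically and \eqref{gbi} is applied, everything collapses to the already-established zeta-function bound $\mathfrak b$.
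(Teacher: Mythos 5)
Your proposal is correct and follows essentially the same route as the paper: induction on $k$ via the telescoping product split, with the undifferenced factor bounded by Lemma~\ref{shf} and the differenced one by the induction hypothesis, followed by the symmetric exponent split $-\frac{\mathtt r}{2}=-\frac{\mathtt r}{4}-\frac{\mathtt r}{4}$, the generalized Bernoulli inequality \eqref{gbi}, and the bound $\mathscr H(\frac{\mathtt r}{4};\nu)\leq\mathfrak b(\frac{\mathtt r}{4};\nu)$ from \eqref{hb}, which is exactly where $\nu<\frac{\mathtt r}{4}$ enters. If anything, your ordering in the interpolation step (first extracting $(1+|n|)^{-\frac{\mathtt r}{4}}$ using the constraint, then dropping the constraint to decouple the sums) is slightly cleaner than the paper's displayed chain.
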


\begin{proof}
For $k=1$ one has
\begin{align*}
|c_1(t,n)-c_0(t,n)| & \leq \frac{1}{4}\int_0^t\sum_{\substack{n_1,n_2\in\mathbb Z^\nu\\n_1+n_2=n}}\prod_{j=1}^2|c_0(s,n_j)| \, {\rm d}\tau \\
& = \frac{1}{4}\int_0^t\sum_{\substack{n_1,n_2\in\mathbb Z^\nu\\n_1+n_2=n}}\prod_{j=1}^2|c(n_j)| \, {\rm d}\tau \\
& \leq \frac{\mathtt B^2t}{4}\sum_{\substack{n_1,n_2\in\mathbb Z^\nu\\n_1+n_2=n}}\left\{\prod_{j=1}^2(1+|n_j|)\right\}^{-\frac{\mathtt r}{2}}.
\end{align*}
This shows that \eqref{spr} is true for $k=1$. Let $k\geq2$ and suppose that it holds for $1,\cdots,k-1$. For $k$, one can derive that
\begin{align*}
|c_{k}(t,n)-c_{k-1}(t,n)| & \leq \frac{1}{4}\int_0^t\sum_{\substack{n_1,n_2\in\mathbb Z^\nu\\n_1+n_2=n}}|c_{k-1}(\tau,n_1)||c_{k-1}(\tau,n_2)-c_{k-2}(\tau,n_2)| \, {\rm d}\tau \triangleq(\uppercase\expandafter{\romannumeral1^\prime}) \\
& \quad + \frac{1}{4}\int_0^t\sum_{\substack{n_1,n_2\in\mathbb Z^\nu\\n_1+n_2=n}}|c_{k-1}(\tau,n_1)-c_{k-2}(\tau,n_1)||c_{k-2}(\tau,n_2)| \, {\rm d}\tau\triangleq(\uppercase\expandafter{\romannumeral2^\prime}),
\end{align*}
where
\begin{align*}
(\uppercase\expandafter{\romannumeral1^\prime}) & \leq \frac{1}{4}\int_0^t\sum_{\substack{n_1,n_2\in\mathbb Z^\nu\\n_1+n_2=n}}\mathtt B(1+|n_1|)^{-\frac{\mathtt r}{2}}\cdot\frac{2^{k-2}\mathtt B^{k}\tau^{k-1}}{4^{k-1}\cdot(k-1)!}\sum_{\substack{m_1,\cdots,m_k\in\mathbb Z^\nu\\m_1+\cdots+m_k=n_2}}\left\{\prod_{j=1}^k(1+|m_j|)\right\}^{-\frac{\mathtt r}{2}} \, {\rm d}\tau \\
& = \frac{2^{k-2}\mathtt B^{k+1}t^k}{4^k\cdot k!}\sum_{\substack{n_1,n_2\in\mathbb Z^\nu\\n_1+n_2=n}}\sum_{\substack{m_1,\cdots,m_k\in\mathbb Z^\nu\\m_1+\cdots+m_k=n_2}}(1+|n_1|)^{-\frac{\mathtt r}{2}}\left\{\prod_{j=1}^k(1+|m_j|)\right\}^{-\frac{\mathtt r}{2}} \\
& = \frac{2^{k-2}\mathtt B^{k+1}t^k}{4^k\cdot k!}\sum_{\substack{n_1,\cdots,n_{k+1}\in\mathbb Z^\nu\\n_1+\cdots+n_{k+1}=n}}\left\{\prod_{j=1}^{k+1}(1+|n_j|)\right\}^{-\frac{\mathtt r}{2}},
\end{align*}
and analogously
\begin{align*}
(\uppercase\expandafter{\romannumeral2^\prime})
\leq&\frac{2^{k-2}\mathtt B^{k+1}t^k}{4^k\cdot k!}\sum_{\substack{n_1,\cdots,n_{k+1}\in\mathbb Z^\nu\\n_1+\cdots+n_{k+1}=n}}\left\{\prod_{j=1}^{k+1}(1+|n_j|)\right\}^{-\frac{\mathtt r}{2}}.
\end{align*}
Hence we have
\begin{align*}
|c_k(t,n)-c_{k-1}(t,n)| & \leq (\uppercase\expandafter{\romannumeral1^\prime})+(\uppercase\expandafter{\romannumeral2^\prime}) \\
& \leq \frac{2^{k-1}\mathtt B^{k+1}t^k}{4^k\cdot k!}\sum_{\substack{n_1,\cdots,n_{k+1}\in\mathbb Z^\nu\\n_1+\cdots+n_{k+1}=n}}\left\{\prod_{j=1}^{k+1}(1+|n_j|)\right\}^{-\frac{\mathtt r}{2}} \\
& \leq \frac{2^{k-1}\mathtt B^{k+1}t^k}{4^k\cdot k!}\sum_{\substack{n_1,\cdots,n_{k+1}\in\mathbb Z^\nu\\n_1+\cdots+n_{k+1}=n}}\left\{\prod_{j=1}^{k+1}(1+|n_j|)\right\}^{-\frac{\mathtt r}{4}}\cdot
\left\{\prod_{j=1}^{k+1}(1+|n_j|)\right\}^{-\frac{\mathtt r}{4}} \\
& \leq \frac{2^{k-1}\mathtt B^{k+1}t^k}{4^k\cdot k!}\sum_{\substack{n_1,\cdots,n_{k+1}\in\mathbb Z^\nu}}\left\{\prod_{j=1}^{k+1}(1+|n_j|)\right\}^{-\frac{\mathtt r}{4}}\cdot
\left\{1+\sum_{j=1}^{k+1}|n_j|)\right\}^{-\frac{\mathtt r}{4}} \\
& \leq \frac{2^{k-1}\mathtt B^{k+1}t^k}{4^k\cdot k!}\prod_{j=1}^{k+1}\underbrace{\sum_{n_j\in\mathbb Z^\nu}\left\{(1+|n_j|)\right\}^{-\frac{\mathtt r}{4}}}_{_{=\mathscr H\left(\frac{\mathtt r}{4};\nu\right)\leq\mathfrak b\left(\frac{\mathtt r}{4};\nu\right)~~\text{by}~~\eqref{hb}}}\cdot
\left\{1+|n|\right\}^{-\frac{\mathtt r}{4}} \\
& = \frac{\mathtt B\mathfrak b\left(\frac{\mathtt r}{4};\nu\right)}{2}\cdot\frac{\left\{2^{-1}\mathtt B\mathfrak b\left(\frac{\mathtt r}{4};\nu\right)t\right\}^{k}}{k!}\cdot\left\{1+|n|\right\}^{-\frac{\mathtt r}{4}}.
\end{align*}
This completes the proof of Lemma \ref{pq}.
\end{proof}

We are now in a position to prove our second main result, Theorem B.

\begin{proof}[Proof of Theorem~B]
The existence proof is similar to the case of exponential decay. The uniqueness proof is analogous to proving that the Picard sequence is a Cauchy sequence. We mainly give a convergence analysis to show that the solution we construct is in the classical sense.
In fact, for $\#=0,1,2$, one can derive that
\begin{eqnarray*}
\sum_{n\in\mathbb Z^\nu}|n|^{\#}|c(t,n)| & {\lesssim} & \sum_{n\in\mathbb Z^\nu}|n|^{\#}(1+|n|)^{-\frac{\mathtt r}{2}} \\
& \leq & \sum_{n\in\mathbb Z^\nu}(1+|n|)^{\#-\frac{\mathtt r}{2}}\\
&=&\mathscr H\left(\frac{\mathtt r}{2}-\#;\nu\right)
\end{eqnarray*}
and
\begin{eqnarray*}
\sum_{n\in\mathbb Z^\nu}|n|^{\#}\sum_{\substack{n_1,n_2\in\mathbb Z^\nu\\n_1+n_2=n}}\prod_{j=1}^2|c(t,n_j)| & \lesssim & \sum_{n\in\mathbb Z^\nu}|n|^{\#}\sum_{\substack{n_1,n_2\in\mathbb Z^\nu\\n_1+n_2=n}}\prod_{j=1}^2(1+|n_j|)^{-\frac{\mathtt r}{2}} \\
& \stackrel{\eqref{gbi}}{\leq} & \sum_{n\in\mathbb Z^\nu}\sum_{\substack{n_1,n_2\in\mathbb Z^\nu\\n_1+n_2=n}}(|n_1|+|n_2|)^{\#}\prod_{j=1}^2(1+|n_j|)^{-\frac{\mathtt r}{4}}\cdot(1+|n|)^{-\frac{\mathtt r}{4}} \\
& \leq & \sum_{n\in\mathbb Z^\nu}(1+|n|)^{-\frac{\mathtt r}{4}}\sum_{\substack{n_1,n_2\in\mathbb Z^\nu\\n_1+n_2=n}}\left((1+|n_1|)(1+|n_2|)\right)^{\#}\prod_{j=1}^2(1+|n_j|)^{-\frac{\mathtt r}{4}} \\
& = & \sum_{n\in\mathbb Z^\nu}(1+|n|)^{-\frac{\mathtt r}{4}}\sum_{\substack{n_1,n_2\in\mathbb Z^\nu\\n_1+n_2=n}}\prod_{j=1}^2(1+|n_j|)^{\#-\frac{\mathtt r}{4}} \\
& \leq & \sum_{n\in\mathbb Z^\nu}(1+|n|)^{-\frac{\mathtt r}{4}}\sum_{\substack{n_1,n_2\in\mathbb Z^\nu}}\prod_{j=1}^2(1+|n_j|)^{\#-\frac{\mathtt r}{4}} \\
& \leq & \sum_{n\in\mathbb Z^\nu}(1+|n|)^{-\frac{\mathtt r}{4}}\prod_{j=1}^2\sum_{\substack{n_j\in\mathbb Z^\nu}}(1+|n_j|)^{\#-\frac{\mathtt r}{4}} \\
& = & \mathscr H\left(\frac{\mathtt r}{4};\nu\right)\left\{\mathscr H\left(\frac{\mathtt r}{4}-\#;\nu\right)\right\}^2.
\end{eqnarray*}
It follows from \eqref{mm} that the convergence needed can be guaranteed if
\[\quad2\leq\nu<\frac{\mathtt r}{4}-2
.\]
This completes the proof of Theorem B.
\end{proof}

\bibliographystyle{alpha}
\bibliography{bbm}

\end{document}